\providecommand{\tabularnewline}{\\}
\numberwithin{equation}{section}
\numberwithin{figure}{section}
\theoremstyle{plain}
\newtheorem{thm}{\protect\theoremname}
\theoremstyle{definition}
\newtheorem{defn}[thm]{\protect\definitionname}
\theoremstyle{plain}
\newtheorem{lem}[thm]{\protect\lemmaname}
\theoremstyle{remark}
\newtheorem{rem}[thm]{\protect\remarkname}
\theoremstyle{definition}
\newtheorem{example}[thm]{\protect\examplename}
\theoremstyle{plain}
\newtheorem{cor}[thm]{\protect\corollaryname}
\theoremstyle{remark}
\newtheorem{notation}[thm]{\protect\notationname}
\theoremstyle{plain}
\newtheorem*{thm*}{\protect\theoremname}
\date{}
\theoremstyle{definition}
\newtheorem{construction}[thm]{Construction}
\DeclareFontFamily{OMX}{MnSymbolE}{}
\DeclareSymbolFont{MnLargeSymbols}{OMX}{MnSymbolE}{m}{n}
\DeclareFontShape{OMX}{MnSymbolE}{m}{n}{
    <-6>  MnSymbolE5
   <6-7>  MnSymbolE6
   <7-8>  MnSymbolE7
   <8-9>  MnSymbolE8
   <9-10> MnSymbolE9
  <10-12> MnSymbolE10
  <12->   MnSymbolE12
}{}
\DeclareFontShape{OMX}{MnSymbolE}{b}{n}{
    <-6>  MnSymbolE-Bold5
   <6-7>  MnSymbolE-Bold6
   <7-8>  MnSymbolE-Bold7
   <8-9>  MnSymbolE-Bold8
   <9-10> MnSymbolE-Bold9
  <10-12> MnSymbolE-Bold10
  <12->   MnSymbolE-Bold12
}{}
\let\llangle\@undefined
\let\rrangle\@undefined
\DeclareMathDelimiter{\llangle}{\mathopen}%
                     {MnLargeSymbols}{'164}{MnLargeSymbols}{'164}
\DeclareMathDelimiter{\rrangle}{\mathclose}%
                     {MnLargeSymbols}{'171}{MnLargeSymbols}{'171}
\crefname{construction}{construction}{constructions}
\Crefname{construction}{Construction}{Constructions}
\providecommand{\corollaryname}{Corollary}
\providecommand{\definitionname}{Definition}
\providecommand{\examplename}{Example}
\providecommand{\lemmaname}{Lemma}
\providecommand{\notationname}{Notation}
\providecommand{\remarkname}{Remark}
\providecommand{\theoremname}{Theorem}
\begin{document}
\title{On the recognition problem for limits of entropy functions}
\author{Geva Yashfe}
\address{University of Chicago}
\begin{abstract}
We prove that there is no algorithm to decide whether a given integer
vector is in the closure of the entropic cone $\overline{\Gamma_{n}^{*}}$.
Equivalently, there is no decision procedure to determine whether
a given integer-valued function $h:\mathcal{P}\left(\{1,\ldots,n\}\right)\rightarrow\mathbb{Z}_{\ge0}$
is a pointwise limit of joint entropy functions. In other words, given
such an $h$, it is undecidable whether for all $\varepsilon>0$ there
exists a finite probability space $\left(\Omega,P\right)$ with random
variables $X_{1},\ldots,X_{n}$ such that their joint entropy $H$
satisfies $\max_{I\subseteq\{1,\ldots,n\}}\left|H\left(X_{I}\right)-h\left(I\right)\right|<\varepsilon${\normalsize .}
This settles the last open case in a sequence of related undecidability
results proved by L. K\"{u}hne and the author, with applications
in algorithmic information theory. The main new tool is a Desargues'-type
theorem for almost entropic polymatroids.
\end{abstract}

\maketitle

\section{Introduction}

The entropy regions $\Gamma_{n}^{*}\subset\mathbb{R}^{2^{n}}$ are
central objects of algorithmic information theory. For given $n\in\mathbb{N}$,
they describe all the possible joint entropies of a collection of
$n$ random variables on a finite probability space (for precise definitions
see \cref{sec:entropy}; for context in algorithmic information theory,
see \cite{Yeung_network_coding}). The closures $\overline{\Gamma_{n}^{*}}$
of these regions are mathematically better behaved, being convex cones,
and in a sense they are more useful in practice. The fact that $\Gamma_{n}^{*}\neq\overline{\Gamma_{n}^{*}}$
for all $n\ge4$ captures the phenomenon that some network coding
problems have no optimal solution, but rather a sequence of successively
better solutions approaching some optimal rate.

Our main result is the following.
\begin{thm}
\label[theorem]{thm:undecidability}There is no decision procedure
that takes as input $n\in\mathbb{N}$ and a vector $v\in\mathbb{Z}^{2^{n}}$
and determines whether $v\in\overline{\Gamma_{n}^{*}}$.
\end{thm}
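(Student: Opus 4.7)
The plan is to reduce from an undecidable group-theoretic problem to membership in $\overline{\Gamma_n^*}$, extending the approach used by Kühne and the author for the (non-closed) entropy region. Starting from a finite group presentation $G=\langle S\mid R\rangle$ — whose triviality problem is undecidable by the Adian--Rabin theorem — I would effectively produce an integer vector $v_G\in\mathbb{Z}^{2^{n_G}}$, the rank function of a polymatroid $P_G$ built from projective incidence gadgets implementing a von Staudt style coordinatization: a distinguished line in a projective plane carries points labeled by generators of $G$, and the relations of $G$ are imposed by forcing certain projective sums and products of these points to collapse to the identity. The reduction outputs the pair $(n_G,v_G)$.

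The easy direction is that if $G$ is trivial, then $v_G\in\overline{\Gamma_n^*}$: this one verifies by exhibiting a sequence of linear polymatroids over finite fields (which are always entropic up to scaling) whose rank vectors converge to $v_G$. The hard direction — and the main obstacle — is the converse: from the mere fact that $v_G$ is a limit of entropy vectors, one must recover a coordinatizing skew field for the encoded projective plane, and hence a nontrivial homomorphic image of $G$ whenever $G$ is nontrivial. For the non-closed region $\Gamma_n^*$ one has access to genuine probability spaces and can extract algebraic structure exactly; in the closure only approximate data is available, and the classical Desargues argument behaves badly under limits, since non-Shannon inequalities (Zhang--Yeung and its descendants) already show that $\overline{\Gamma_n^*}$ is strictly smaller than the polymatroid cone and strictly larger than the closure of the linear cone. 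Bridging this gap is exactly the role of the Desargues type theorem advertised in the abstract: one shows that the projective plane incidences encoded in $P_G$, when realized even approximately inside $\overline{\Gamma_n^*}$, force a Desargues type collapse that yields well-defined coordinates.

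Granting the Desargues type theorem, the remainder of the argument is an adaptation of the Kühne--Yashfe machinery: any almost entropic realization of $v_G$ produces a homomorphism from $G$ into the multiplicative (or additive) group of the extracted skew field, and this homomorphism is forced to be nontrivial exactly when $G$ is nontrivial; consequently deciding $v_G\in\overline{\Gamma_n^*}$ would decide triviality of $G$. I expect the bulk of the real work — and the conceptual novelty of the paper — to lie in formulating and proving the Desargues type theorem for almost entropic polymatroids, which should require combining convex geometric properties of $\overline{\Gamma_n^*}$, non-Shannon information inequalities, approximation by linearly representable polymatroids, and a continuity argument showing that the obstruction to Desargues vanishes in the limit. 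A secondary subtlety is that the encoding must be robust enough that the approximate incidences in any entropic vector close to $v_G$ can be promoted, in the limit, to exact projective incidences in the associated almost entropic polymatroid.
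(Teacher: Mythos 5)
Your proposal correctly locates the shape of the argument (a reduction from a group-theoretic undecidability result via incidence-geometric gadgets, with a Desargues-type theorem for almost entropic polymatroids as the new ingredient), but at the two places where the real work happens it either stops or heads in a direction that would not work. First, the Desargues theorem itself: you defer it entirely, and the ingredients you predict for its proof (non-Shannon inequalities, convex-geometric properties of $\overline{\Gamma_{n}^{*}}$, approximation by linear polymatroids, a continuity argument) are not what is needed and do not obviously assemble into a proof. The paper derives Desargues from a \emph{three-line intersection theorem} --- three pairwise coplanar lines not all in one plane meet in a common point in some almost entropic extension --- which is an extension property coming from the Ahlswede--K\"orner lemma, together with the copy lemma to lift planar configurations to rank $4$. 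Relatedly, your von Staudt coordinatization plan is the wrong synthetic framework here: von Staudt needs to intersect essentially arbitrary pairs of coplanar lines, and in an almost entropic polymatroid two coplanar lines need not meet at all (the paper states this explicitly, and it already fails for algebraic matroids). The construction that survives is the partial Dowling geometry, in which the only intersections ever required are of three pairwise-coplanar, non-coplanar lines, and what one recovers is a group (via a geometric product on generator-labelled points, with associativity supplied by Desargues), not a coordinatizing skew field.

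Second, the reduction itself is not set up correctly. Reducing from Adian--Rabin triviality of an arbitrary finitely presented group breaks the ``positive'' direction: to show the constructed vector lies in $\overline{\Gamma_{n}^{*}}$ when it should, you must exhibit approximate (multi)linear representations of the encoded group, and this is only known for sofic groups --- which is why the paper inherits from \cite{KY22} a reduction from the word problem for torsion-free sofic finitely presented groups, where \cite[Thm.~9.16]{KY22} already supplies almost multilinear (hence almost entropic) realizations when the distinguished generator is nontrivial. Your stated polarity (``$G$ trivial $\Rightarrow v_{G}\in\overline{\Gamma_{n}^{*}}$'', then recovering ``a nontrivial homomorphic image of $G$ whenever $G$ is nontrivial'' from a realization) does not close into a biconditional: a realization of a group-encoding geometry only ever yields a \emph{quotient} of $G$, so realizability cannot certify triviality, and exhibiting a nontrivial quotient does not contradict membership in the cone. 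The correct and only genuinely new implication, which is what the paper proves, is: an almost entropic realization of the rank-$3$ PDG of $\langle S\mid R\rangle$ in which $x_{1,2}$ and $e_{1,2}$ are not parallel forces $x$ to be nontrivial in $G$; this is obtained by lifting the PDG to rank $4$ (copy lemma), closing it under geometric products, and reading off a quotient group of $G$ in which $\left[x\right]\neq\left[e\right]$.
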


We prove this by showing the equivalent statement that there is no
algorithm that determines whether a given (finite) matroid is almost
entropic.

Several implications of this are known. A primary one is that approximate
conditional independence implication is undecidable. For a detailed
discussion of approximate conditional independence implication, see
\cite{Kenig_Suciu}; the proof of undecidability given \cref{thm:undecidability}
is given in \cite[Thm. 8.3]{KY22}.

\subsection*{Related work}

The non-approximate conditional independence implication problem was
first proved undecidable (together with several related problems)
in \cite{CT_Li}. This was also independently shown in \cite{KY22},
that first proved a form of \cref{thm:undecidability} for $\Gamma_{n}^{*}$,
as well as for ``almost multilinear'' polymatroids (that correspond
to approximate coding with linear codes). The present paper strongly
depends on the results of \cite{KY22} on almost multilinear matroids
(see \cref{sec:KY22}).

The cones $\overline{\Gamma_{n}^{*}}$ have been considered in various
papers from a dual viewpoint: since they are closed convex cones in
Euclidean space, they are characterized by the linear inequalities
that they satisfy. There have been various attempts to understand
these inequalities. Some of the first results in this direction are
\cite{Matus_non_polyhedral}, \cite{Zhang_Yeung}, \cite{DFZ1}.

\subsection*{Organization of the paper}

We recall and introduce basic definitions in \cref{sec:preliminaries}.
In \cref{sec:aent_geometry} we explain how to do synthetic geometry
in an almost entropic setting, and prove the version of Desargues'
theorem that we need. In \cref{sec:PDG} we recall the partial Dowling
geometries (PDGs) of \cite{KY19} and \cite{KY22} (in \cite{KY19}
these were called \emph{generalized} Dowling geometries): these are
the central objects we work with. In \cref{sec:PDG_groups} we explain
how to recover a group from an almost entropic PDG of rank $4$, and
in \cref{sec:PDG_lifting} we explain how the same can be done with
an almost entropic PDG of rank $3$. The main theorem is proved in
\cref{sec:undecidability}.

\subsection*{Acknowledgements}

It is a pleasure to thank many people who helped, both directly and
indirectly, with the writing of this paper. I thank Lukas K\"{u}hne
for many helpful conversations, and for our collaboration in previous
works which led to these results. I also thank the organizers and
participants of the Dagstuhl Seminar 22301 ``Algorithmic Aspects
of Information Theory,'' who taught me some of the basic notions
of the field, and without whom this paper could not have been written.

Finally, I thank Zlil Sela for helpful conversations and support, as well as the math department of the Hebrew University of Jerusalem. This paper was written while I was a PhD student in Jerusalem. 

\section{Preliminaries}

\label[section]{sec:preliminaries}

\subsection{Matroids and polymatroids}
\begin{defn}
A\emph{ polymatroid} is a pair $\left(E,f\right)$ consisting of a
set $E$ (called the \emph{ground set}) together with a function $f:P\left(E\right)\rightarrow\mathbb{R}$
satisfying:
\begin{enumerate}
\item $f\left(\emptyset\right)=0$,
\item If $A\subseteq B\subseteq E$ then $f\left(A\right)\le f\left(B\right)$
(monotonicity),
\item If $A,B\subseteq E$ then $f\left(A\right)+f\left(B\right)\ge f\left(A\cup B\right)+f\left(A\cap B\right)$
(submodularity).
\end{enumerate}
The three conditions above are the polymatroid axioms. 

If $E$ is infinite, then $f$ is of \emph{finite type} if it also
satisfies:
\begin{enumerate}
\item [4.] For all $S\subseteq E$ we have $f\left(S\right)=\sup\left\{ h\left(F\right)\mid F\subseteq S\text{ finite}\right\} $.
\end{enumerate}
It is a \emph{matroid} if in addition $f$ takes values in $\mathbb{Z}$
and $f(\{x\})\le1$ for all $x\in E$.

For $a_{1},\ldots,a_{k}\in E$ we denote $f\left(a_{1},\ldots,a_{k}\right)=f\left(\left\{ a_{1},\ldots,a_{k}\right\} \right)$.
\end{defn}

We collect basic consequences for readers less familiar with this
language.
\begin{lem}
[Basic consequences of the axioms] Let $\left(E,f\right)$ be a polymatroid.
Then:
\begin{enumerate}
\item (Diminishing returns.) For any $B\subseteq A\subseteq E$ and $C\subseteq E$
such that $A\cap C=\emptyset$ we have $f\left(B\cup C\right)-f\left(B\right)\ge f\left(A\cup C\right)-f\left(A\right)$.
This property is equivalent to submodularity.
\item (Closure.) For $A\subseteq E$ define $\mathrm{cl}\left(A\right)=\left\{ x\in E\mid f\left(A\right)=f\left(A\cup\left\{ x\right\} \right)\right\} $
(this is the closure operator). If $\left(E,f\right)$ is finite type
then $f\left(A\right)=f\left(\mathrm{cl}\left(A\right)\right)$ and
$\mathrm{cl}\left(\mathrm{cl}\left(A\right)\right)=\mathrm{cl}\left(A\right)$.
\item (Subadditivity.) For any $A_{1},\ldots,A_{n}\subseteq E$ we have
$f\left(\bigcup_{i=1}^{n}A_{i}\right)\le\sum_{i=1}^{n}f\left(A_{i}\right)$.
\end{enumerate}
\end{lem}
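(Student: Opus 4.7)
I would prove the three items in the order (1), (3), (2), treating (1) and (3) as short axiom manipulations and saving (2) for last, since (2) is the only part requiring the finite-type hypothesis.

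For (1), my plan is a direct rearrangement in each direction. Assuming submodularity, given $B \subseteq A$ and $A \cap C = \emptyset$, I apply submodularity to the pair $(A,\, B \cup C)$: the hypotheses force $A \cap (B \cup C) = B$ and $A \cup (B \cup C) = A \cup C$, so submodularity becomes $f(A) + f(B \cup C) \ge f(A \cup C) + f(B)$, which is diminishing returns. Conversely, for arbitrary $A_0, B_0 \subseteq E$, I apply diminishing returns with the substitutions $B := A_0 \cap B_0$, $A := A_0$, $C := B_0 \setminus A_0$; the hypotheses of diminishing returns hold, and after simplification the conclusion is exactly submodularity.

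For (3) I would induct on $n$. The base $n = 2$ follows from submodularity together with $f(A_1 \cap A_2) \ge 0$, and the latter follows from $f(\emptyset) = 0$ and monotonicity. The inductive step splits off one set and applies the $n=2$ case to the pair $(\bigcup_{i < n} A_i, A_n)$.

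For (2), once $f(A) = f(\mathrm{cl}(A))$ is established, the idempotence $\mathrm{cl}(\mathrm{cl}(A)) = \mathrm{cl}(A)$ is a formal consequence of chasing the defining equality. So the main work is the equality of values, and this is where I expect the main obstacle. I would first show by induction on $|F|$ that for every \emph{finite} $F \subseteq \mathrm{cl}(A)$, $f(A \cup F) = f(A)$. The inductive step adds a new element $x \in \mathrm{cl}(A) \setminus F$ and applies diminishing returns with the substitution $B := A$, $A := A \cup F$, $C := \{x\}$: since $f(A \cup \{x\}) - f(A) = 0$ by the definition of closure, monotonicity pinches $f(A \cup F \cup \{x\}) = f(A \cup F)$, which equals $f(A)$ by inductive hypothesis. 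The obstacle is then the passage from finite $F$ to the possibly infinite set $\mathrm{cl}(A)$, which is precisely the place I would invoke the finite-type axiom (4): writing $f(\mathrm{cl}(A)) = \sup\{f(F) : F \subseteq \mathrm{cl}(A) \text{ finite}\}$, each term is bounded by $f(A \cup F) = f(A)$, and monotonicity gives the reverse inequality.
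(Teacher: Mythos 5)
Your proposal is correct and follows essentially the same route as the paper: the same substitution for the equivalence of submodularity and diminishing returns, induction on $n$ via submodularity for subadditivity, and for the closure statement an induction on the size of a finite subset of $\mathrm{cl}(A)$ followed by the finite-type axiom to pass to the full closure. The only (harmless) point to note is that in the inductive step for (2) you should take $x\in\mathrm{cl}(A)\setminus(A\cup F)$ so that the disjointness hypothesis of diminishing returns holds; the remaining case is trivial.
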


\begin{proof}
\begin{enumerate}
\item Apply submodularity to the sets $B\cup C$ and $A$ and apply the
assumptions on $A,B,C$ to obtain $f\left(B\cup C\right)+f\left(A\right)\ge f\left(A\cup C\right)+f\left(B\right)$,
then rearrange the terms. 

To see that conversely the diminishing returns property also implies
submodularity, let $X,Y\subseteq E$. Define $C\coloneqq X\setminus Y$,
$B=X\cap Y$, and $A=Y$. Then apply diminishing returns (noting that
$B\subseteq A$ and $A\cap C=\emptyset$ by construction).
\item Observe that $A\subseteq\mathrm{cl}\left(A\right)$. Apply monotonicity
and diminishing returns to obtain that $f\left(A\right)=f\left(A\cup B\right)$
for any finite $B\subseteq\mathrm{cl}\left(A\right)$, by induction
on $\left|B\right|$. Then apply the finite type assumption. This
shows $f\left(A\right)=f\left(\mathrm{cl}\left(A\right)\right)$.
Given this and $A\subseteq\mathrm{cl}\left(A\right)$, the equality
$\mathrm{cl}\left(\mathrm{cl}\left(A\right)\right)=\mathrm{cl}\left(A\right)$
follows directly.
\item By induction on $n$, using submodularity.
\end{enumerate}
\end{proof}
\begin{rem}
By a polymatroid $f$ \emph{on $E$ }we mean a polymatroid $\left(E,f\right)$.
We sometimes omit $E$ and refer just to $f$ if the ground set is
clear. Such an $f$ can be identified with a vector in $\mathbb{R}^{2^{E}}$.
The collection of all polymatroids on $E$ is then identified with
a convex cone; if $E$ is finite, this cone is closed.
\end{rem}

\begin{rem}
The ground set $E$ of a (poly-)matroid is sometimes required to be
finite, but it is convenient to allow infinite $E$ in this paper.
There are various notions of infinite matroids, suitable for different
applications. I was unable to find the ``finite type'' hypothesis
used here elsewhere in the literature, but it is close to standard
notions: specialized to matroids, it is the same as requiring that
the matroid is finitary and of finite rank.

An example of a non-finitary matroid $\left(\mathbb{Q},f\right)$
is given by 
\[
f\left(S\right)=\begin{cases}
1 & \sqrt{2}\in\mathrm{cl}_{\mathbb{R}}\left(S\right)\\
0 & \text{otherwise,}
\end{cases}
\]
where $\mathrm{cl}_{\mathbb{R}}$ is the usual topological closure
operation.

One can tighten condition (4) slightly and demand that for all $S\subseteq E$
there exists a finite $F\subseteq S$ such that $f\left(F\right)=f\left(S\right)$.
But it's not clear whether our constructions can be made to satisfy
this.
\end{rem}

\begin{defn}
An \emph{extension} of a polymatroid $\left(E,f\right)$ is a polymatroid
$\left(\hat{E},f\right)$ such that $E\subseteq\hat{E}$ and $\hat{f}\restriction_{\mathcal{P}\left(E\right)}=f$.
A \emph{restriction} of $f$ is a polymatroid $f'$ such that $f$
is an extension of $f'$.
\end{defn}

The preceding definition is very standard. We generalize it slightly
(in a rather obvious way) for later use.
\begin{defn}
An \emph{embedding} of a polymatroid $\left(E,f\right)$ into a polymatroid
$\left(\hat{E},f\right)$ is an injective function $\eta:E\rightarrow\hat{E}$
such that $f\left(S\right)=g\left(\eta\left(S\right)\right)$ for
all $S\subseteq E$.
\end{defn}

\begin{defn}
Let $\left(E,f\right)$ be a polymatroid and let $A\subseteq E$.
The function $f_{/A}:\mathcal{P}\left(E\right)\rightarrow\mathbb{R}$
given by
\[
f_{/A}\left(S\right)=f\left(S\cup A\right)-f\left(A\right)
\]
is a polymatroid, called the\emph{ contraction} of $\left(E,f\right)$
by $A$. We sometimes call $f_{/A}\left(S\right)$ ``the rank of
$S$ over $A$''.
\end{defn}

We may sometimes use other common definitions from matroid theory.
Undefined terminology can be found in \cite{Oxley_Matroid_theory}.

\subsection{Entropy, polymatroids, and the entropic cone}

\label[section]{sec:entropy}
\begin{defn}
Let $\left(\Omega,P\right)$ be a finite probability space and $X$
a random variable on $\Omega$ (with codomain $\mathrm{codom}\left(X\right)$).
The \emph{entropy}\footnote{Intuitively, entropy measures information content. Consider the distribution
of a tuple of $n$ independent samples from the distribution of $X$:
in order to encode the value of such a tuple (in such a way that it
can be decoded with high probability) roughly $nH\left(X\right)$
bits are both necessary and sufficient. This was made precise and
proved in \cite[Thm. 4]{Shannon_1948}.} of $X$ is
\[
H\left(X\right)=\sum_{x\in\mathrm{codom}\left(X\right)}P\left(X=x\right)\log\left(\frac{1}{P\left(X=x\right)}\right),
\]
where if $P\left(X=x\right)=0$ then $P\left(X=x\right)\log\left(1/P\left(X=x\right)\right)=0$
by convention. For concreteness we take the basis of the logarithm
to be $2$, but any real number larger than $1$ will work as well.
Note that $H\left(X\right)$ depends only on the distribution of $X$
(meaning the multiset of values $\left\{ P\left(X=x\right)\mid x\in\mathrm{codom}\left(X\right)\right\} $.
Also note that the sum always converges, since only finitely many
values have positive probability.

Let $\left\{ X_{i}\right\} _{i\in E}$ be random variables on $\Omega$
(with codomains $\left\{ \mathrm{codom}\left(X_{i}\right)\mid i\in E\right\} $).
For $S\subseteq E$ we denote by $X_{S}=\left(X_{i}\right)_{i\in S}$
the random variable $\Omega\rightarrow\prod_{i\in S}\mathrm{codom}\left(X_{i}\right)$
given by $X_{S}\left(\omega\right)=\left(X_{i}\left(\omega\right)\right)_{i\in S}$.
We obtain a function
\[
h:\mathcal{P}\left(E\right)\rightarrow\mathbb{R}
\]
\[
h\left(S\right)=H\left(X_{S}\right).
\]
It is known that this is a polymatroid; such polymatroids are called
\emph{entropic}.
\end{defn}

\begin{rem}
Contractions are often used in information theory, though with slightly
different language. For $\left(E,h\right)$ as above the value $h_{/A}\left(S\right)$
is often denoted $H\left(X_{S}\mid X_{A}\right)$ and is called the\emph{
conditional entropy}.
\end{rem}

\begin{defn}
Let $E$ be a set. A polymatroid $\left(E,h\right)$ of finite type
is \emph{almost entropic} if it is a pointwise limit of entropic polymatroids
on $E$, i.e. there are entropic polymatroids $\left\{ \left(E,h_{n}\right)\right\} _{n\in\mathbb{N}}$
such that $h_{n}\underset{n\rightarrow\infty}{\rightarrow}h$ pointwise.
\end{defn}

\begin{lem}
\label[lemma]{lem:aent_chain}Let $E_{1}\subseteq E_{2}\subseteq\ldots\subseteq E_{n}\subseteq\ldots$
be a chain of finite sets, $E=\bigcup_{n\in\mathbb{N}}E_{n}$, and
$\left(E,h\right)$ a polymatroid of finite type. Then $h$ is almost
entropic if and only if $h\restriction_{\mathcal{P}\left(E_{n}\right)}$
is almost entropic for all $n\in\mathbb{N}$.
\end{lem}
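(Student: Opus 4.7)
\medskip

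The forward direction is essentially immediate: if $(E,h)$ is almost entropic with entropic approximants $h_k \to h$ pointwise on $\mathcal{P}(E)$, and $h_k$ comes from random variables $\{X_i\}_{i\in E}$ on a finite probability space $\Omega_k$, then the same random variables indexed by $E_n \subseteq E$ give an entropic polymatroid on $E_n$ equal to $h_k\!\restriction_{\mathcal{P}(E_n)}$. Pointwise convergence restricts to $h\!\restriction_{\mathcal{P}(E_n)}$, so the restriction is almost entropic.

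For the reverse direction, the plan is to combine a uniform approximation on each $E_m$ with a diagonal-type construction. Fix $m$. Since $E_m$ is finite and $h\!\restriction_{\mathcal{P}(E_m)}$ is almost entropic, pointwise convergence on the finite Boolean lattice $\mathcal{P}(E_m)$ is uniform, so I can pick an entropic polymatroid $h^{(m)}$ on $E_m$ (arising from random variables $\{X_i^{(m)}\}_{i\in E_m}$ on some finite $(\Omega_m,P_m)$) with
\[
\max_{S\subseteq E_m}\bigl|h^{(m)}(S)-h(S)\bigr|<\tfrac{1}{m}.
\]
I then extend $h^{(m)}$ to a polymatroid $\tilde h^{(m)}$ on all of $E$ by padding: for each $j\in E\setminus E_m$, declare $X_j^{(m)}$ to be a constant random variable on $\Omega_m$. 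This makes $\tilde h^{(m)}$ entropic on $E$, and
\[
\tilde h^{(m)}(S)=h^{(m)}(S\cap E_m)\qquad\text{for all }S\subseteq E.
\]

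It remains to argue $\tilde h^{(m)}\to h$ pointwise on $\mathcal{P}(E)$. Fix $S\subseteq E$. Then $|\tilde h^{(m)}(S)-h(S\cap E_m)|<1/m$, so it suffices to show $h(S\cap E_m)\to h(S)$ as $m\to\infty$. This is exactly where the finite type hypothesis is used: since $E=\bigcup_m E_m$, every finite $F\subseteq S$ lies in $S\cap E_m$ for all sufficiently large $m$, hence $h(F)\le h(S\cap E_m)\le h(S)$ eventually, and taking the supremum over finite $F\subseteq S$ gives $\sup_m h(S\cap E_m)=h(S)$; monotonicity of $m\mapsto h(S\cap E_m)$ then yields convergence. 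The only step that requires care is recognizing that without the finite type condition this last passage could fail (an infinite set could have rank strictly greater than the supremum of ranks of its finite subsets), so I expect that to be the only nontrivial point, and everything else is bookkeeping.
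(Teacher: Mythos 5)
Your proposal is correct and follows essentially the same route as the paper: the forward direction by restriction, and the reverse direction by choosing a uniformly $1/m$-close entropic approximant on each finite $E_m$, padding with constant random variables to get $\tilde h^{(m)}(S)=h^{(m)}(S\cap E_m)$ on all of $E$, and invoking the finite type hypothesis to conclude $h(S\cap E_m)\to h(S)$. Your expanded justification of that last convergence (via the supremum over finite subsets and monotonicity) is exactly the step the paper leaves implicit.
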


\begin{proof}
In one direction, it is immediate from the definitions that $h\restriction_{\mathcal{P}\left(E_{n}\right)}$
is almost entropic if $h$ is. 

In the other direction, one uses a diagonal argument: for each $n\in\mathbb{N}$
choose an entropic polymatroid $h_{n}$ such that $\sup_{S\subseteq E_{n}}\left|h_{n}\left(S\right)-h\left(S\right)\right|<\frac{1}{n}$.
Each $h_{n}$ comes with a finite probability space $\left(\Omega_{n},P_{n}\right)$
and random variables $\left\{ X_{n,i}\right\} _{i\in E_{n}}$. We
extend the collection of random variables by defining, for each $i\in E\setminus E_{n}$,
a constant random variable $X_{n,i}$ on $\Omega_{n}$. This extends
$h_{n}$ to a polymatroid $\hat{h}_{n}:\mathcal{P}\left(E\right)\rightarrow\mathbb{R}$
satisfying that $\hat{h}_{n}\left(S\right)=h_{n}\left(S\cap E_{n}\right)$
for all $S\subseteq E$, because constant variables always have entropy
$0$, and the extended polymatroid is still entropic. Now given $\varepsilon>0$
and $S\subseteq E$, the finite type hypothesis implies that for all
large enough $n$ we have $\left|h\left(S\cap E_{n}\right)-h\left(S\right)\right|<\varepsilon$.
Therefore 
\[
\left|\hat{h}_{n}\left(S\right)-h\left(S\right)\right|=\left|h_{n}\left(S\cap E_{n}\right)-h\left(S\right)\right|\le
\]
\[
\left|h_{n}\left(S\cap E_{n}\right)-h\left(S\cap E_{n}\right)\right|+\left|h\left(S\cap E_{n}\right)-h\left(S\right)\right|<\varepsilon+\frac{1}{n}
\]
and the claim follows.
\end{proof}
\begin{defn}
Let $n\in\mathbb{N}$ and identify each entropic polymatroid $\mathcal{P}\left(\left\{ 1,\ldots,n\right\} \right)\rightarrow\mathbb{R}$
with the corresponding vector in $\mathbb{R}^{2^{n}}$. We denote
the set of all such entropic polymatroids by $\Gamma_{n}^{*}\subset\mathbb{R}^{2^{n}}$.
Its closure is $\overline{\Gamma_{n}^{*}}$, the \emph{closed entropic
cone}. This is a convex cone: see \cite{Yeung_network_coding}.
\end{defn}

\section{Some almost entropic geometry}

\label[section]{sec:aent_geometry}

In projective geometry over a field, any pair of coplanar lines intersects
at a unique point. Various incidence theorems hold, most importantly
the theorems of Desargues and Pappus\footnote{Abstract projective geometries satisfying these two theorems actually
come from a field: in a sense that can be made precise, Desargues'
and Pappus' theorems are equivalent to associativity and commutativity
of multiplication, respectively.}. Our setting is not as rich: even pairs of coplanar lines need not
intersect. But it does satisfy several incidence theorems, including
Desargues'. It is useful to clarify what we mean by an incidence theorem
in the language of polymatroids; the following is a classical example.
\begin{example}
Let $k$ be a field. Define a matroid $r_{k}:\mathcal{P}\left(\mathbb{P}_{k}^{2}\right)\rightarrow\mathbb{R}$
on the projective plane over $k$ by setting 
\[
r_{k}\left(S\right)=\begin{cases}
0 & S=\emptyset\\
1 & S\text{ is a singleton}\\
2 & \left|S\right|\ge2\text{ is contained in a line}\\
3 & \text{otherwise.}
\end{cases}
\]
This is the rank function associated to the projective plane, considered
as a combinatorial geometry. Let $\mathcal{C}$ be the class of all
restrictions of matroids of the form $r_{k}$ (where $k$ is a field). 

Let $a,b,c,d\in\mathbb{P}_{k}^{2}$ be four distinct points, no three
on a line, and denote $E=\left\{ a,b,c,d\right\} $. This determines
$f\coloneqq r_{k}\restriction_{\mathcal{P}\left(E\right)}$ uniquely
(it is the uniform matroid of rank $3$ on $4$ points). Each of the
two pairs $\left\{ a,b\right\} $ and $\left\{ c,d\right\} $ spans
a line in $\mathbb{P}_{k}^{2}$, and these lines intersect at a unique
point $e$. Denoting $\hat{E}=E\cup\{e\}$ and $\hat{f}=r_{k}\restriction_{\mathcal{P}\left(\hat{E}\right)}$,
we observe that $\hat{f}$ is independent of $k$ and of the choice
of $a,b,c,d$. The following ``incidence theorem'' expresses that
fact that projective lines intersect nontrivially: given $\left(T,g\right)\in\mathcal{C}$
and an embedding $\eta:\left(E,f\right)\rightarrow\left(T,g\right)$,
there is a $\left(\hat{T},\hat{g}\right)\in\mathcal{C}$ and a commutative
diagram of polymatroid embeddings:
\[
\xymatrix{\left(\hat{E},\hat{f}\right)\ar[r] & \left(\hat{T},\hat{g}\right)\\
\left(E,f\right)\ar[u]\ar[r]^{\eta} & \left(T,g\right)\ar[u]
}
\]
where the embedding $\left(E,f\right)\rightarrow\left(\hat{E},\hat{f}\right)$
is given by the inclusion $E\subset\hat{E}$. This expresses the fact
that any two lines of each $\mathbb{P}_{k}^{2}$ intersect at a point.
\end{example}

The incidence theorem of the preceding example does not hold for the
class of almost entropic polymatroids\footnote{In fact, it does not even hold for the class of algebraic matroids,
which is known to be smaller by \cite{Matus_algebraic_aent}.}. However, there is a related theorem with stronger assumptions that
does hold, which we call the \emph{three line intersection theorem}.
This means that three lines, pairwise coplanar but not all lying in
the same plane, have a common point of intersection (this also holds
in projective spaces over a field). Formally:
\begin{thm}
Let $E=\left\{ a_{1},a_{2},b_{1},b_{2},c_{1},c_{2}\right\} $. Let
$f:\mathcal{P}\left(E\right)\rightarrow\mathbb{R}$ be the matroid
with circuits $\left\{ a_{1},a_{2},b_{1},b_{2}\right\} $, $\left\{ a_{1},a_{2},c_{1},c_{2}\right\} $,
and $\left\{ b_{1},b_{2},c_{1},c_{2}\right\} $, so that in particular:
\[
f\left(E\right)=4,\quad f\left(\{x\}\right)=1\text{ for each singleton \ensuremath{x},}
\]
\[
f\left(C\right)=3\text{ for each of the three circuits,}\quad f\left(a_{1},a_{2}\right)=f\left(b_{1},b_{2}\right)=f\left(c_{1},c_{2}\right)=2.
\]
Let $\hat{E}=E\cup\left\{ d\right\} $, and let $\hat{f}:\mathcal{P}\left(\hat{E}\right)\rightarrow\mathbb{R}$
be the matroid with the six circuits $\left\{ a_{1},a_{2},b_{1},b_{2}\right\} $,
$\left\{ a_{1},a_{2},c_{1},c_{2}\right\} $, $\left\{ b_{1},b_{2},c_{1},c_{2}\right\} $,
$\left\{ a_{1},a_{2},d\right\} $, $\left\{ b_{1},b_{2},d\right\} $,
$\left\{ c_{1},c_{2},d\right\} $. Note that the inclusion $E\subset\hat{E}$
induces an embedding $i:\left(E,f\right)\rightarrow\left(\hat{E},\hat{f}\right)$. 

If $\eta:\left(E,f\right)\rightarrow\left(T,g\right)$ is an embedding
with $\left(T,g\right)$ almost entropic, there exists an embedding
of $\left(T,g\right)$ into an almost entropic polymatroid $\left(\hat{T},\hat{g}\right)$
such that the diagram
\[
\xymatrix{\left(\hat{E},\hat{f}\right)\ar[r] & \left(\hat{T},\hat{g}\right)\\
\left(E,f\right)\ar[r]^{\eta}\ar[u]^{i} & \left(T,g\right)\ar[u]
}
\]
commutes.
\end{thm}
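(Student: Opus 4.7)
The plan is to enlarge $(T, g)$ by a single new element $d$ of rank $1$ that lies on all three lines $L_a = \{a_1, a_2\}$, $L_b = \{b_1, b_2\}$, $L_c = \{c_1, c_2\}$, while staying inside the almost entropic cone. I would first add $d$ as a ``common point'' of only $L_a$ and $L_b$, and then show that the coplanarity of $L_c$ with the other two lines, combined with submodularity, forces $d$ onto $L_c$ automatically. Under the identification coming from $\eta$ I freely write $L_a, L_b, L_c$ for their images in $T$.

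For the first step, the hypothesis $g(L_a \cup L_b) = 3 = g(L_a) + g(L_b) - 1$ says that the mutual information of $L_a$ and $L_b$ equals the rank of a single point. In this configuration I would invoke an almost entropic ``common point'' extension operation, which I expect to be set up earlier in this section as one of the basic tools: it produces an extension $(T', g')$ of $(T, g)$, still almost entropic, containing a new element $d$ with
\[
g'(\{d\}) = 1, \quad g'(L_a \cup \{d\}) = 2, \quad g'(L_b \cup \{d\}) = 2,
\]
and otherwise free, so that the restriction of $g'$ to $\eta(E) \cup \{d\}$ realises the matroid structure in which five of the six circuits of $\hat{f}$ hold (everything except the incidence with $L_c$) and the remaining rank values are those of the freest such extension. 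I expect the main technical obstacle to live here: the verification of such an extension requires one to blow up a sequence of entropic polymatroids approximating $g$ and concentrate the mutual information between $L_a$ and $L_b$ into a new variable in the limit, in the style of Matus's constructions for limits of entropy functions.

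Once $(T', g')$ is in hand the remainder is formal. From $d \in \mathrm{cl}(L_a)$ one obtains $g'(\{d\} \cup L_a \cup L_c) = g(L_a \cup L_c) = 3$; similarly $g'(\{d\} \cup L_b \cup L_c) = 3$, and $g'(\{d\} \cup L_a \cup L_b \cup L_c) = g(L_a \cup L_b \cup L_c) = 4$. Since $L_a \cap L_b = \emptyset$, the sets $\{d\} \cup L_a \cup L_c$ and $\{d\} \cup L_b \cup L_c$ intersect in $\{d\} \cup L_c$, so submodularity gives
\[
3 + 3 \ge 4 + g'(\{d\} \cup L_c),
\]
forcing $g'(\{d\} \cup L_c) = 2$. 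This supplies the sixth circuit, so the restriction of $g'$ to $\eta(E) \cup \{d\}$ agrees with $\hat{f}$ under the identification extending $\eta$ by $d \mapsto d$. Setting $(\hat{T}, \hat{g}) = (T', g')$ then yields an almost entropic target, an embedding of $(T, g)$ into it (the inclusion coming from the extension), and an embedding of $(\hat{E}, \hat{f})$ into it, and these fit into the required commutative square because $\eta$ and the inclusion $E \subset \hat{E}$ agree on $E$ by construction.
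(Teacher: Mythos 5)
There is a genuine gap, and it sits exactly where you suspected it would: your first step invokes a ``common point'' extension that produces, from two coplanar lines $L_a$ and $L_b$ alone, a rank-one element $d$ lying on both. No such operation exists for almost entropic polymatroids. This is precisely the content of the example immediately preceding the theorem in the paper: the two-line intersection property, which holds for restrictions of projective planes over a field, \emph{fails} for the class of almost entropic polymatroids (and even for algebraic matroids). In information-theoretic terms, the condition $g(L_a \cup L_b) = g(L_a) + g(L_b) - 1$ says the mutual information $I(X_{L_a}; X_{L_b})$ equals $1$, but extracting a common random variable $d$ with $H(d)=1$, $H(d \mid X_{L_a}) = H(d \mid X_{L_b}) = 0$ would require the G\'acs--K\"orner common information to equal the mutual information, which fails for generic distributions and is not repaired by passing to limits. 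So the route of ``concentrating the mutual information between $L_a$ and $L_b$ into a new variable'' cannot work in the generality you propose; the third line $L_c$ and the rank-$4$ hypothesis must enter the \emph{construction} of $d$, not merely a post hoc verification. The actual proof (which the paper does not reproduce, citing instead the Ahlswede--K\"orner lemma and Prop.\ 3.16 of Bamiloshon--Farr\`as--Padr\'o) uses the conditional independences supplied by all three pairwise-coplanar lines spanning rank $4$ --- a double-Markov-type configuration --- to extract the common point in the limit.

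Your second step is correct and worth keeping in mind: \emph{given} a rank-one $d$ in the closures of both $L_a$ and $L_b$, submodularity applied to $\{d\}\cup L_a\cup L_c$ and $\{d\}\cup L_b\cup L_c$ does force $g'(\{d\}\cup L_c) = 2$, so the sixth circuit comes for free. But this only relocates the entire difficulty into the existence of $d$, which is the one step your argument does not supply.
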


For $\left(T,g\right)$ a positive multiple of an entropic polymatroid\footnote{A positive multiple of an entropic polymatroid is a polymatroid of
the form $\left(T,c\cdot g\right)$ where $c>0$ and $\left(T,g\right)$
is entropic.} this can be shown directly, and the additional random variable corresponding
to $d$ can be constructed in the same sample space as the one corresponding
to $\left(T,g\right)$. For almost entropic polymatroids this is more
subtle: the result is a consequence of the Ahlswede-K\"orner lemma
\cite[Lemma 5]{Makarychev_Makarychev_Romashchenko_Vereshchagin}.
I first learned the almost entropic case in a conversation with Janneke
Bolt, Andrei Romashchenko, and Alexander Shen; since then a proof
has also appeared in \cite[Prop. 3.16]{Bamiloshon_Farras_Padro_extension_properties}.

\subsection{The copy lemma}

The copy lemma (see \cite{DFZ1, Matus_adhesivity, DFZ2}) is a very
useful tool for working with (almost) entropic polymatroids. It is
a primary tool in the search for nontrivial constraints on almost
entropic polymatroids. In this paper we use it to deduce incidence
theorems: on its own, the three-line intersection theorem says nothing
about planar (i.e. rank $3$) configurations, because its input data
is a restriction of rank $4$. The copy lemma allows us to lift subconfigurations
into a higher-rank space. A form of the copy lemma for almost entropic
polymatroids follows:
\begin{lem}
\label[lemma]{lem:copy}Let $\left(E,f\right)$ be an almost entropic
polymatroid, and let $B,C\subseteq E$ be disjoint subsets ($B$ is
the ``base'' for the operation, and $C$ is the set to be copied).
Let $C'$ be a disjoint copy of the set $C$: $C'=\left\{ c'\mid c\in C\right\} $.
Then there exists an almost entropic polymatroid $\left(\tilde{E},\tilde{f}\right)$,
where $\tilde{E}=E\sqcup C'$, satisfying:
\begin{enumerate}
\item For all $S\subseteq E$: $\tilde{f}\left(S\right)=f\left(S\right)$.
\item If $S\subseteq B\cup C$ then $f\left(S\right)=\tilde{f}\left(\left(S\cap B\right)\cup\left\{ c'\mid c\in S\right\} \right)$
(so $\tilde{f}\restriction_{B\cup C}$ is isomorphic to $\tilde{f}\restriction_{B\cup C'}$
via the ``obvious'' bijection).
\item For all $S\subseteq\tilde{E}$: $\tilde{f}_{/B}\left(S\right)=\tilde{f}_{/B}\left(S\cap C'\right)+\tilde{f}_{/B}\left(S\setminus C'\right)$
($C'$ is independent from $E\setminus C'$ over $B$).
\end{enumerate}
\end{lem}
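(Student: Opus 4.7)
The plan is to prove the entropic case by an explicit construction of a joint distribution, and then obtain the almost entropic case by a compactness argument on pointwise limits.

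For the entropic case, suppose $(E,f)$ is realized by random variables $\{X_{e}\}_{e\in E}$ on a finite probability space $(\Omega,P)$. On the product sample space $\Omega\times\prod_{c\in C}\mathrm{codom}(X_{c})$ (indexing the second factor by $C'$ in the obvious way), define the joint distribution
\[
\tilde{P}\bigl(X_{E}=x_{E},\,X'_{C'}=y_{C'}\bigr)=P(X_{E}=x_{E})\cdot P(X_{C}=y_{C}\mid X_{B}=x_{B}),
\]
where $y_{C}$ denotes the image of $y_{C'}$ under the canonical bijection $C'\to C$, and the right-hand side is interpreted as $0$ when $P(X_{B}=x_{B})=0$. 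A short computation shows: the marginal distribution of $X_{E}$ is unchanged (yielding (1)), the joint distribution of $(X_{B},X'_{C'})$ equals that of $(X_{B},X_{C})$ (yielding (2)), and, most importantly, conditional on $X_{B}=x_{B}$, the random vectors $X_{E\setminus B}$ and $X'_{C'}$ are independent. This last conditional independence statement is precisely property (3), via the usual identity $H(U,V\mid W)=H(U\mid W)+H(V\mid W)$ for conditionally independent $U,V$ given $W$, applied to the appropriate sub-blocks.

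For the almost entropic case with $E$ finite, choose entropic polymatroids $f_{n}$ with $f_{n}\to f$ pointwise, and apply the entropic construction above to each $f_{n}$ to obtain entropic polymatroids $\tilde{f}_{n}$ on $\tilde{E}=E\sqcup C'$ satisfying (1)--(3) for $f_{n},\tilde{f}_{n}$. Since $\tilde{f}_{n}(\{e\})$ is bounded for each $e\in\tilde{E}$ (it equals either $f_{n}(\{e\})$ or $f_{n}(\{c\})$ by (1) and (2), both of which converge), subadditivity gives a uniform bound on $\tilde{f}_{n}(S)$ for every $S\subseteq\tilde{E}$. By Bolzano--Weierstrass in $\mathbb{R}^{2^{\tilde{E}}}$ we extract a pointwise convergent subsequence; call its limit $\tilde{f}$. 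Being a pointwise limit of entropic polymatroids, $\tilde{f}$ is almost entropic, and since (1), (2), and (3) are linear (in)equalities in the values of the polymatroid, they pass to the limit.

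For the case of infinite $E$, I would reduce to the finite case using \cref{lem:aent_chain}: exhaust $E$ by finite subsets $E_{1}\subseteq E_{2}\subseteq\cdots$, apply the finite case on each $\tilde{E}_{n}=E_{n}\sqcup C'_{n}$ (exhausting $C$ by $C_{n}=C\cap E_{n}$ if $C$ is infinite), and then diagonalize. The finite type hypothesis on $f$ controls the values of $f$ on infinite subsets in terms of finite ones, so a standard diagonal compactness argument produces an almost entropic polymatroid on the full $\tilde{E}$ satisfying all three properties.

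I expect the only real obstacle is getting the construction in the entropic step right, which amounts to recognising that the correct joint distribution is exactly the one in which the copied variables are conditionally i.i.d.\ given $X_{B}$. The transition to almost entropic polymatroids is essentially free, since the target properties are closed under pointwise limits and the relevant space is finite dimensional; the infinite ground set adds only bookkeeping via \cref{lem:aent_chain}.
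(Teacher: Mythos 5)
The paper gives no proof of this lemma at all: it is stated as a known form of the copy lemma with a pointer to \cite{DFZ1, Matus_adhesivity, DFZ2}, so there is no in-paper argument to compare yours against. What you have written is essentially the standard proof underlying those references, and the core of it is correct. The conditional-product distribution $\tilde P(X_E=x_E,\,X'_{C'}=y_{C'})=P(X_E=x_E)\,P(X_C=y_C\mid X_B=x_B)$ is exactly the right construction: it preserves the marginal of $X_E$ (giving (1)), makes $(X_B,X'_{C'})$ distributed like $(X_B,X_C)$ (giving (2)), and makes $X'_{C'}$ conditionally independent of $X_E$ given $X_B$, which yields (3) for every $S\subseteq\tilde E$ once you note that the $B$-part of $S\setminus C'$ contributes nothing to any entropy conditioned on $X_B$ --- a half-line you should write out rather than bury in ``applied to the appropriate sub-blocks.'' The passage to the almost entropic case for finite $E$ is also right: singleton values of $\tilde f_n$ are controlled by (1)--(2), subadditivity bounds everything, Bolzano--Weierstrass gives a convergent subsequence, the limit lies in the closed entropic cone, and (1)--(3) are closed linear conditions, hence survive the limit.

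The only place where your argument is a plan rather than a proof is the infinite ground set. There the issues are not purely bookkeeping: the finite-stage applications give property (3) relative to the truncated bases $B\cap E_n$, whereas the statement requires contraction by all of $B$, so you must use the finite-type hypothesis to identify $\tilde f_{/B}(S)$ with $\lim_n\tilde f_{/(B\cap E_n)}(S\cap\tilde E_n)$ before taking limits; moreover the diagonal limit must itself be verified to be of finite type (pointwise limits of finite-type functions need not be, without the monotone approximation you get from exhausting by finite sets), and the separate finite-stage extensions $\tilde f^{(n)}$ are not restrictions of one another, so the diagonal extraction has to happen at the level of the entropic approximants as in the proof of \cref{lem:aent_chain}. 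None of this threatens the approach, and the paper only ever invokes the lemma for finite ground sets, but as written the last paragraph asserts rather than establishes these points.
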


\begin{rem}
These constraints do not fully determine $\tilde{f}$ in general,
but they do so in some cases of interest. In other cases, (in the
notation of the lemma,) the same almost entropic polymatroid $\left(E,f\right)$
may give rise to different $\left(\tilde{E},\tilde{f}\right)$ depending
on the approximating sequence of entropic polymatroids and choices
made in the construction.
\end{rem}

\subsection{Desargues' theorem for almost entropic polymatroids}

The classical theorem of Desargues states that if $O,a_{1},a_{2},a_{3},b_{1},b_{2},b_{3},x_{1},x_{2}$
are points in a projective space over a field satisfying:
\[
\begin{cases}
O,a_{i},b_{i}\text{ are collinear for each \ensuremath{i}},\\
a_{2},a_{3},x_{1}\text{ and }b_{2},b_{3},x_{1}\text{ are collinear,}\\
a_{1},a_{3},x_{2}\text{ and }b_{1},b_{3},x_{2}\text{ are collinear},\\
a_{1},a_{2},a_{3}\text{ are not collinear},
\end{cases}
\]
then the point $x_{3}$ at the intersection of the lines spanned by
$a_{1},a_{2}$ and by $b_{1},b_{2}$ satisfies that $x_{1},x_{2},x_{3}$
are collinear. 

In polymatroidal language, a set of elements is collinear if it has
rank $2$, and the line spanned by a pair of independent elements
is their closure. With this interpretation the analogous statement
holds almost verbatim for almost entropic polymatroids, except that
it is also a nontrivial claim that the point $x_{3}$ exists (in some
extension). We only need the rank-4 case, in which $O,a_{1},a_{2},a_{3}$
are not coplanar.
\begin{thm}
\label[theorem]{thm:aent_Desargues}Let $\left(E,f\right)$ be an
almost entropic polymatroid. Assume the subset
\[
C=\left\{ O,a_{1},a_{2},a_{3},b_{1},b_{2},b_{3},x_{1},x_{2}\right\} \subseteq E
\]
satisfies:
\[
\begin{cases}
f(p)=1\text{ for each }p\in C,\\
f(p,q)=2\text{ for distinct }p,q\in C,\\
f\left(O,a_{i},b_{i}\right)=2\text{ for each \ensuremath{1\le i\le3}},\\
f\left(a_{2},a_{3},x_{1}\right)=2=f\left(b_{2},b_{3},x_{1}\right),\\
f\left(a_{1},a_{3},x_{2}\right)=2=f\left(b_{1},b_{3},x_{2}\right),\\
f\left(O,a_{1},a_{2},a_{3}\right)=4. & \text{(This is the rank \ensuremath{4} assumption.)}
\end{cases}
\]
Then there exists an extension $\left(\hat{E},\hat{f}\right)$ of
$\left(E,f\right)$ and $x_{3}\in\hat{E}$ such that in addition we
have
\[
\begin{cases}
\hat{f}\left(x_{3}\right)=1,\\
\hat{f}\left(a_{1},a_{2},x_{3}\right)=2=\hat{f}\left(b_{1},b_{2},x_{3}\right),\\
\hat{f}\left(a_{1},x_{3}\right)=2=\hat{f}\left(b_{1},x_{3}\right),\\
\hat{f}\left(x_{1},x_{2},x_{3}\right)=2.
\end{cases}
\]
Further, if there exists $\tilde{x}_{3}\in E\subset\hat{E}$ such
that $f\left(a_{1},a_{2},\tilde{x}_{3}\right)=2=f\left(b_{1},b_{2},\tilde{x}_{3}\right)$,
$f\left(\tilde{x}_{3}\right)=1$, and $f\left(\tilde{x}_{3},a_{1}\right)=2=f\left(\tilde{x}_{3},b_{1}\right)$,
then $\hat{f}\left(x_{3},\tilde{x}_{3}\right)=1$ and $f\left(x_{1},x_{2},\tilde{x}_{3}\right)=2$.
\end{thm}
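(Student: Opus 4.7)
The plan is to treat this as a direct application of the three line intersection theorem to the three ``sides'' of the Desargues configuration. First I would identify the six-element restriction of $(E,f)$ to $E_0 = \{a_1, a_2, b_1, b_2, x_1, x_2\}$ as exactly the matroid appearing in the hypothesis of the three line intersection theorem (with $x_1, x_2$ playing the role of $c_1, c_2$), and then invoke that theorem to obtain the required $x_3$.

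The hypotheses that need verification are the following: all singleton and pairwise ranks in $E_0$ are already given, and the three required four-element circuits correspond to the three pairs of coplanar sides. The set $\{a_1, a_2, b_1, b_2\}$ lies in the plane $\mathrm{cl}(O, a_1, a_2)$ because $b_i \in \mathrm{cl}(O, a_i)$; the set $\{a_1, a_2, x_1, x_2\}$ lies in $\mathrm{cl}(a_1, a_2, a_3)$ because $x_1 \in \mathrm{cl}(a_2, a_3)$ and $x_2 \in \mathrm{cl}(a_1, a_3)$; symmetrically $\{b_1, b_2, x_1, x_2\} \subseteq \mathrm{cl}(b_1, b_2, b_3)$. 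A few submodularity computations starting from $f(O, a_1, a_2, a_3) = 4$ confirm that each of these three planes has rank exactly $3$ (so the four-sets are indeed circuits of rank $3$, with all triples independent) and that $f(E_0) = 4$, which says the three lines are not jointly coplanar. Applied to the embedding $E_0 \hookrightarrow (E,f)$, the three line intersection theorem then produces an almost entropic extension $(\hat E, \hat f)$ together with $x_3 \in \hat E$ making $\{a_1, a_2, x_3\}$, $\{b_1, b_2, x_3\}$, and $\{x_1, x_2, x_3\}$ three-element circuits. This immediately yields $\hat f(x_3) = 1$, $\hat f(a_1, a_2, x_3) = 2 = \hat f(b_1, b_2, x_3) = \hat f(x_1, x_2, x_3)$, and, by minimality of these circuits, $\hat f(a_1, x_3) = 2 = \hat f(b_1, x_3)$.

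For the uniqueness clause, given $\tilde{x}_3 \in E$ with the stated properties, both $\tilde{x}_3$ and $x_3$ lie in $\mathrm{cl}_{\hat f}(a_1, a_2) \cap \mathrm{cl}_{\hat f}(b_1, b_2)$. These two rank-$2$ flats are both contained in the rank-$3$ plane $\mathrm{cl}(O, a_1, a_2)$, and their union contains the rank-$3$ set $\{a_1, a_2, b_1\}$, so by submodularity the intersection has rank at most $1$. Hence $\hat f(x_3, \tilde{x}_3) = 1$, and then $\tilde x_3 \in \mathrm{cl}(x_3) \subseteq \mathrm{cl}(x_1, x_2, x_3)$ forces $f(x_1, x_2, \tilde x_3) = 2$.

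No serious obstacle stands in the way once the three line intersection theorem is available; the argument reduces to the geometric observation, familiar from the projective proof of Desargues, that the lines $(a_1, a_2)$, $(b_1, b_2)$, $(x_1, x_2)$ are the three sides of the perspectivity and are pairwise coplanar (each pair lying in an evident plane) but not jointly coplanar (by the rank-$4$ hypothesis). The ``hardest'' step is simply unpacking this via submodularity to exhibit $E_0$ as the hypothesized six-point rank-$4$ matroid.
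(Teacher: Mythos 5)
Your proposal follows essentially the same route as the paper: reduce to the three-line intersection theorem by verifying that the restriction to $\left\{ a_{1},a_{2},b_{1},b_{2},x_{1},x_{2}\right\}$ is the required six-point rank-$4$ matroid (the paper's verification is exactly the case analysis you defer to ``a few submodularity computations,'' and it must also cover the mixed triples such as $\left\{ a_{1},b_{1},x_{1}\right\}$ and four-sets such as $\left\{ a_{1},a_{2},b_{1},x_{1}\right\}$, not only the three circuit planes, though these follow by the same arguments). Your uniqueness argument, applying submodularity directly to the flats $\mathrm{cl}\left(a_{1},a_{2}\right)$ and $\mathrm{cl}\left(b_{1},b_{2}\right)$, is a correct and slightly cleaner variant of the paper's contraction-by-$\tilde{x}_{3}$ computation.
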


While this is essentially known for entropic polymatroids (by completely
different methods, see \cite{Matus_partitions}), I have not been
able to find a reference for the almost entropic case in the literature.
\begin{rem}
The rank $4$ assumption can be replaced by the assumption that $f\left(a_{1},a_{2},a_{3}\right)=3$
and $f\restriction_{\mathcal{P}\left(C\right)}$ is a matroid. In
this case one applies the copy lemma in order to lift the configuration
to a higher dimensional space, applies the rank $4$ case, and ``projects''
back. The proof is entirely analogous to the usual projective one.
\end{rem}

\begin{proof}
It suffices to prove that the six points $\left\{ a_{1},a_{2},b_{1},b_{2},x_{1},x_{2}\right\} $
satisfy the assumptions of the 3-line intersection lemma (with $x_{1},x_{2}$
playing the role of $c_{1},c_{2}$) because then the resulting intersection
point can be taken to be $x_{3}$.

We first prove that $f\left(a_{1},a_{2},b_{1}\right)=3$: since $f\left(O,a_{1},b_{1}\right)=2=f\left(a_{1},b_{1}\right)$
we have $O\in\mathrm{cl}\left(a_{1},a_{2},b_{1}\right)$, and similarly
$b_{1}\in\mathrm{cl}\left(O,a_{1},a_{2}\right)$. Hence
\[
f\left(a_{1},a_{2},b_{1}\right)=f\left(a_{1},a_{2},O,b_{1}\right)=f\left(a_{1},a_{2},O\right)=3
\]
where the last equality holds because $f\left(O,a_{1},a_{2},a_{3}\right)=4$
and each of the singletons has rank $1$, so they are independent.
In the same way also $f\left(a_{1},a_{2},b_{2}\right)=3$.

Also $f\left(a_{1},a_{2},x_{1}\right)=3$, because $f\left(a_{2},a_{3},x_{1}\right)=2=f\left(a_{2},x_{1}\right)=f\left(a_{2},a_{3}\right)$,
so $x_{1}\in\mathrm{cl}\left(a_{2},a_{3}\right)$ and $a_{3}\in\mathrm{cl}\left(a_{2},x_{1}\right)$.
The wanted equality follows as above. Similarly $f\left(a_{1},a_{2},x_{2}\right)=3$.

Now consider $f\left(a_{1},a_{2},b_{i},x_{j}\right)$ for $1\le i,j\le2$:
this rank is at most $4$ (the sum of the ranks of the singletons)
and we have $a_{3}\in\mathrm{cl}\left(a_{1},a_{2},x_{j}\right)$ (for
instance, if $j=1$ we have $a_{3}\in\mathrm{cl}\left(a_{2},x_{1}\right)$).
We also have $O\in\mathrm{cl}\left(a_{i},b_{i}\right)$, so that 
\[
f\left(a_{1},a_{2},b_{i},x_{j}\right)\ge f\left(O,a_{1},a_{2},a_{3}\right)=4.
\]
Hence the rank equals precisely $4$.

Also $f\left(x_{1},x_{2},a_{i},b_{j}\right)=4$ for $1\le i,j\le2$:
$4$ is clearly an upper bound (the sum of ranks of the singletons).
Assume without loss of generality that $i=1$. Then $a_{3}\in\mathrm{cl}\left(x_{2},a_{i}\right)$.
Also, $a_{2}\in\mathrm{cl}\left(x_{1},a_{3}\right)\subset\mathrm{cl}\left(x_{1},x_{2},a_{i}\right)$.
Hence
\[
f\left(x_{1},x_{2},a_{i},b_{j}\right)\ge f\left(a_{1},a_{2},a_{3},b_{j}\right),
\]
where $O\in\mathrm{cl}\left(a_{j},b_{j}\right)$. So the last rank
is at least $f\left(O,a_{1},a_{2},a_{3}\right)=4$.

Observe that $f\left(O,b_{1},b_{2},b_{3}\right)=4$ since the closure
of this set contains $\left\{ O,a_{1},a_{2},a_{3}\right\} $ (and
$4$ is again an upper bound on the rank). In particular, the above
equalities also hold on exchanging $\left\{ a_{1},a_{2}\right\} $
with $\left\{ b_{1},b_{2}\right\} $. Up to performing such an exchange,
every subset of cardinality $3$ of $\left\{ a_{1},a_{2},b_{1},b_{2},x_{1},x_{2}\right\} $
is a subset of one of the sets considered above, and hence has rank
$3$. We have also shown that every subset of cardinality $4$ which
does not contain two of the pairs $\left\{ a_{1},a_{2}\right\} $,
$\left\{ b_{1},b_{2}\right\} $, and $\left\{ x_{1},x_{2}\right\} $
has rank $4$ (and the pairs of pairs have rank $3$ by assumption).
Since the entire polymatroid has rank $4$, this determines it completely
and proves that it equals the matroid of the three-line intersection
theorem.

Now assume there exists $\tilde{x}_{3}\in E\subset\hat{E}$ with $f\left(a_{1},a_{2},\tilde{x}_{3}\right)=2=f\left(b_{1},b_{2},\tilde{x}_{3}\right)$,
$f\left(\tilde{x}_{3}\right)=1$, and $f\left(\tilde{x}_{3},a_{1}\right)=2=f\left(\tilde{x}_{3},b_{1}\right)$.
Then $\mathrm{cl}\left(\tilde{x}_{3},a_{1}\right)\ni a_{2}$ and since
$\mathrm{cl}^{2}=\mathrm{cl}$ we obtain $\mathrm{cl}\left(\tilde{x}_{3},a_{1}\right)=\mathrm{cl}\left(\tilde{x}_{3},a_{1},a_{2}\right)\ni x_{3}$.
This implies 
\[
f\left(\tilde{x}_{3},x_{3},a_{1}\right)=f\left(\tilde{x}_{3},x_{3},a_{1},a_{2}\right)=2,
\]
and in the same way also $f\left(\tilde{x}_{3},x_{3},b_{1}\right)=2$.
We also have $f\left(\tilde{x}_{3},x_{3},a_{1},b_{1}\right)=3$: on
the one hand, by monotonicity, it is at least $f\left(x_{3},a_{1},b_{2}\right)=3$
(this is part of the definition of the three-line intersection configuration).
On the other hand, this set is contained in the closure of $\left\{ a_{1},a_{2},b_{1}\right\} $,
which has rank $3$, (again by definition of the three-line intersection
configuration,) and monotonicity gives the wanted equality. Now $f_{/\tilde{x}_{3}}\left(x_{3},b_{1}\right)=1=f_{/\tilde{x}_{3}}\left(x_{3},a_{1}\right)$
and $f_{/\tilde{x_{3}}}\left(x_{3},a_{1},b_{1}\right)=2$ by definition.
By submodularity,
\[
f_{/\tilde{x}_{3}}\left(\left\{ x_{3},a_{1}\right\} \cap\left\{ x_{3},b_{1}\right\} \right)+f_{/\tilde{x}_{3}}\left(\left\{ x_{3},a_{1}\right\} \cup\left\{ x_{3},b_{1}\right\} \right)\le f_{/\tilde{x}_{3}}\left(x_{3},a_{1}\right)+f_{/\tilde{x}_{3}}\left(x_{3},b_{1}\right)
\]
\[
f_{/\tilde{x}_{3}}\left(x_{3}\right)+f_{/\tilde{x}_{3}}\left(x_{3},a_{1},b_{1}\right)\le2
\]
and hence $f_{/\tilde{x}_{3}}\left(x_{3}\right)=f\left(\tilde{x}_{3},x_{3}\right)-f\left(\tilde{x}_{3}\right)=0$,
so $f\left(\tilde{x_{3}},x_{3}\right)=1$ as required. We have $x_{3}\in\mathrm{cl}_{\hat{f}}\left(\tilde{x}_{3}\right)$
and $\tilde{x_{3}}\in\mathrm{cl}_{\hat{f}}\left(x_{3}\right)$, so
that 
\[
\hat{f}\left(x_{1},x_{2},x_{3}\right)=\hat{f}\left(x_{1},x_{2},x_{3},\tilde{x}_{3}\right)=\hat{f}\left(x_{1},x_{2},\tilde{x}_{3}\right)=2.
\]
Since $\left(\hat{E},\hat{f}\right)$ extends $\left(E,f\right)$,
we obtain $f\left(x_{1},x_{2},\tilde{x}_{3}\right)=2$.
\end{proof}

\subsubsection*{Notation for Desargues configurations}
\begin{defn}
Let $\left(E,f\right)$ be a polymatroid. Triples $\left(a_{1},a_{2},a_{3}\right)$
and $\left(b_{1},b_{2},b_{3}\right)$ of elements of $E$ are \emph{in
perspective} from $O\in E$ if $f\left(O,a_{i},b_{i}\right)=2$ for
all $1\le i\le3$.
\end{defn}

Thus a Desargues configuration in $\left(E,f\right)$ consists of: 
\begin{itemize}
\item a point $O\in E$, 
\item two triples $\left(a_{1},a_{2},a_{3}\right)$ and $\left(b_{1},b_{2},b_{3}\right)$
in perspective from $O$, and 
\item a pair of ``intersection points'' $x_{1}\in\mathrm{cl}\left(a_{2},a_{3}\right)\cap\mathrm{cl}\left(b_{2},b_{3}\right)$
and $x_{2}\in\mathrm{cl}\left(a_{1},a_{3}\right)\cap\mathrm{cl}\left(b_{1},b_{3}\right)$,
\end{itemize}
such that each of the points has rank $1$, each pair has rank $2$,
and the subset $\left\{ O,a_{1},a_{2},a_{3}\right\} $ has rank $4$.

\section{Partial Dowling geometries}

\label[section]{sec:PDG}

Partial Dowling geometries are a class of matroids that mildly generalizes
the Dowling geometries (\cite{Dowling_geometries}). L. K\"{u}hne
and I introduced them in \cite{KY19} in a context similar to that
of the current paper. One constructs PDGs from certain group presentations
$\left\langle S\mid R\right\rangle $ (with various assumptions on
the presentation, which are easy to obtain in practice and do not
constrain the resulting group). A representation of a partial Dowling
geometry in each of the multilinear, entropic, and almost-multilinear
settings gives rise to a certain representation of the group, and
in particular witnesses that the presentation's generators (other
than the trivial element, which we assume to be in $S$) are nontrivial
(\cite{KY19, KY22}). A main result of this paper is that the same
holds for almost entropic representations, and more generally for
classes of representations that admit a copy lemma and a three-line
intersection theorem.

It is useful to generalize the definition of \cite{KY19} to allow
polymatroids rather than just matroids, and rank larger than $3$:
some of our constructions enlarge a given PDG to one in which the
rank function is no longer integral, and we work with PDGs of rank
$4$. Simple matroids of rank $3$ which are PDGs in the sense given
here are precisely the PDGs of \cite{KY19,KY22}.
\begin{defn}
A partial Dowling geometry (PDG) of rank $r$ is a triple $\left(E,f,S\right)$
such that $\left(E,f\right)$ is a polymatroid and $S$ is a set,
called the \emph{generating set}, such that the following hold:
\begin{enumerate}
\item \label{PDG:S}$S$ is equipped with an involution $\left(\cdot\right)^{-1}$
and has a distinguished element $e$ satisfying $e^{-1}=e$.
\item \label{PDG:B}$E$ has a subset $B=\left\{ b_{1},\ldots,b_{r}\right\} $,
called the distinguished basis, satisfying that for all $A\subseteq B$:
$f\left(A\right)=\left|A\right|$.
\item \label{PDG:E}For each $1\le i<j\le r$, $E$ contains a copy of $S$
with elements denoted $\left\{ s_{i,j}\mid s\in S\right\} $; these
${r \choose 2}$ copies are disjoint from each other and from $B$,
so that 
\[
E=B\sqcup\bigsqcup_{1\le i<j\le r}\left\{ s_{i,j}\mid s\in S\right\} .
\]
For $i<j$ we extend the involution to $\left\{ s_{i,j}\mid s\in S\right\} $
by setting $s_{i,j}^{-1}\coloneqq\left(s^{-1}\right)_{i,j}$. For
$i<j$ we denote $s_{j,i}\coloneqq s_{i,j}^{-1}$ and $s_{j,i}^{-1}\coloneqq s_{i,j}$.
\item \label{PDG:frame}For all $1\le i,j\le r$ we have $s_{i,j}\in\mathrm{cl}\left(b_{i},b_{j}\right)$.
\item \label{PDG:nondegeneracy}For each $s\in S$ and each $1\le i,j\le r$
we have $f\left(s_{i,j}\right)=1$ and $f\left(b_{i},s_{i,j}\right)=2$.
\item \label{PDG:coherence}For each $s\in S$ and all $1\le i,j,k\le r$
we have $f\left(s_{i,j},s_{j,k}^{-1},e_{k,i}\right)=2$.
\end{enumerate}
A \emph{relator} of a PDG is a triple $\left(s,s',s''\right)$ of
elements of $S$ such that $f\left(s_{i,j},s_{j,k}^{\prime},s_{k,i}^{\prime\prime}\right)=2$
whenever $1\le i,j,k\le r$ are distinct.
\begin{defn}
An \emph{incoherent PDG} is a triple $\left(E,f,S\right)$ which satisfies
all conditions for PDGs except possibly (\ref{PDG:E},\ref{PDG:coherence}).
Instead of (\ref{PDG:E}), we require only that $B\subseteq E\subseteq B\sqcup\bigsqcup_{1\le i<j\le r}\left\{ s_{i,j}\mid s\in S\right\} $;
the conditions (\ref{PDG:frame},\ref{PDG:nondegeneracy}) are then
assumed to hold only for those elements $s_{i,j}$ that are in $E$.

Note that in this case we still have $s_{j,i}^{-1}=s_{i,j}$ whenever
$s_{i,j}\in E$. 
\end{defn}

\end{defn}

\begin{rem}
\begin{enumerate}
\item PDGs are coherent (i.e. do satisfy (\ref{PDG:E},\ref{PDG:coherence}))
unless we explicitly state otherwise. The notion of incoherent PDGs
is useful mainly because it allows us to apply some lemmas to partially
constructed PDGs, in order to prove that they satisfy the conditions.
\item Condition (\ref{PDG:frame}) implies that $f\left(\left\{ b_{i},b_{j}\right\} \cup\left\{ s_{i,j}\mid s\in S\right\} \right)=2$.
In the presence of condition (\ref{PDG:B}), these statements are
equivalent.
\item Condition (\ref{PDG:frame}) also implies that $\mathrm{cl}\left(B\right)=E$
and hence $f\left(E\right)=r$. 
\item Condition (\ref{PDG:coherence}) is equivalent to the statement that
$\left(s,s^{-1},e\right)$ is a relator of $\left(E,f,S\right)$ for
each $s\in S$. In particular $\left(e,e,e\right)$ is always a relator.
\item Like in groups, relators are closed under cyclic shifts and inversion
(of the order as well as the individual generators): if $\left(s,s',s''\right)$
is a relator then so are $\left(s',s'',s\right)$ and $\left(s''{}^{-1},s'{}^{-1},s^{-1}\right)$.
\item In previous papers we used circuits rather than working with the rank
directly; this is more convenient when possible, but unsuitable for
working with polymatroids.
\end{enumerate}
\end{rem}

\subsubsection*{The PDG of a finitely presented group}

The following construction is a straightforward adaptation from \cite{KY19},
and always yields a matroid.

\begin{construction}[PDGs from group presentations]\label[construction]{cons:fp_group_PDG}
Let $G=\left\langle S\mid R\right\rangle $ be a finitely presented
group. Assume the generating set is symmetric (closed under $\left(\cdot\right)^{-1}$),
all relations are of length $3$ (of form $ss's''=e_{G}$), and that
$e_{G}\in S$.\footnote{For the resulting polymatroid to be almost entropic, an additional
necessary (but far from sufficient) condition is that the relations
are symmetric, in the sense that if $ss's''=e_{G}$ is a relation
then so is $s''ss'=e_{G}$ (this additional relation holds: the left
hand side is $s''\cdot\left(ss's''\right)\cdot s''{}^{-1}$). We call
such presentations \emph{symmetric triangular.}} We construct a partial Dowling geometry $\left(E,f,S\right)$ of
rank $3$, with generating set $S$ exactly the given generating set
of $G$.

The involution is precisely the inversion map on $S$, and the only
data left to be specified is the rank function (insofar as it is not
completely determined by the definition of PDGs). 

For any $T\subseteq E$ of cardinality $1$ or $2$ we set $f\left(T\right)=\left|T\right|$.
If $\left|T\right|=3$, we set $f\left(T\right)=2$ if $T\subseteq\left\{ b_{i},b_{j}\right\} \cup\left\{ s_{i,j}\mid s\in S\right\} $
or if $T=\left\{ s_{i,j},s'_{j,k},s''_{k,i}\right\} $ where $ss's''=e_{G}$
is a relation in $R$; otherwise we set $f\left(T\right)=3$. For
$\left|T\right|\ge3$ we set 
\[
f\left(T\right)=\max_{T'\subset T,\left|T'\right|=3}f\left(T'\right).
\]

\end{construction}

Note that the relators of the PDG of $\left\langle S\mid R\right\rangle $
are exactly those in $R$.

\subsubsection*{Nondegeneracy}

It is useful to know that certain subsets of a PDG are independent.
In many cases this is impossible to guarantee: for example, if $s,s'\in S$
are distinct, it is entirely possible that $f\left(s_{1,2},s_{1,2}^{\prime}\right)=\frac{3}{2}$.
However, it is not difficult to prove that $f\left(s_{1,2},s_{2,3}^{\prime}\right)=2$
always holds. Since we need this sort of statement in several cases,
we give a relatively general independence criterion for subsets disjoint
from the distinguished basis.

In the following $K_{n}$ denotes the complete graph on $\left[n\right]=\left\{ 1,\ldots,n\right\} $.
Its edge set is ${\left[n\right] \choose 2}$, the set of unordered
pairs of elements of $\left[n\right]$.
\begin{lem}
\label[lemma]{lem:acyclic_independence}Let $\left(E,f,S\right)$
be a (possibly incoherent) PDG of rank $r$. Let $s^{(1)},\ldots,s^{(n)}\in S$
(not necessarily distinct) and for each $1\le t\le n$ let $1\le i_{t},j_{t}\le r$
be a distinct pair of indices such that $\left\{ i_{t},j_{t}\right\} \neq\left\{ i_{u},j_{u}\right\} $
whenever $t\neq u$. Let $A=\left\{ s_{i_{t},j_{t}}^{(t)}\right\} _{1\le t\le n}$,
and assume that $s_{i_{t},j_{t}}^{(t)}\in E$ for all $t$. If the
subgraph $G$ of $K_{r}$ with edges $E(G)=\left\{ \left\{ i_{t},j_{t}\right\} \mid1\le t\le n\right\} $
and vertices $V(G)=\bigcup_{t}\left\{ i_{t},j_{t}\right\} $ contains
no cycle then $A$ has rank $n$ in $\left(E,f\right)$.
\end{lem}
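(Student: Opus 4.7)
The plan is to compute the rank of $A$ together with a carefully chosen set of basis elements in two ways, and then extract $f(A) = n$ from equality in subadditivity. Since $G$ is a forest, write its connected components as $C_{1},\ldots,C_{c}$. For each component $C_{k}$, pick a root vertex $r_{k}\in V(C_{k})$; set $R = \{b_{r_{k}}:1\le k\le c\}$ and $B_{V} = \{b_{v}:v\in V(G)\}$. A standard forest-counting identity (each tree has one more vertex than edges) gives $|V(G)| = n + c$.

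\emph{Step 1: show $B_{V}\subseteq\mathrm{cl}(A\cup R)$.} For each component $C_{k}$, orient its edges away from $r_{k}$ and proceed by induction on the distance from the root. The base case $b_{r_{k}}\in R$ is immediate. For the inductive step, suppose $b_{u}\in\mathrm{cl}(A\cup R)$ and $\{u,v\}$ is an edge of $C_{k}$ corresponding to some element $s^{(t)}_{u,v}\in A$. Then condition (\ref{PDG:frame}) gives $s^{(t)}_{u,v}\in\mathrm{cl}(b_{u},b_{v})$, so $f(b_{u},b_{v},s^{(t)}_{u,v}) = f(b_{u},b_{v}) = 2$; combined with $f(b_{u},s^{(t)}_{u,v}) = 2$ from (\ref{PDG:nondegeneracy}), this forces $b_{v}\in\mathrm{cl}(b_{u},s^{(t)}_{u,v})\subseteq\mathrm{cl}(A\cup R)$.

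\emph{Step 2: conclude $f(A\cup R) = n+c$.} Step 1 together with monotonicity gives $f(A\cup R)\ge f(B_{V}) = |V(G)| = n+c$. Conversely, by subadditivity and (\ref{PDG:nondegeneracy}), $f(A) \le \sum_{a\in A}f(\{a\}) = n$, and by (\ref{PDG:B}), $f(R) = c$, so subadditivity yields $f(A\cup R)\le f(A)+f(R)\le n+c$. Hence equality $f(A\cup R) = n+c$, and equality must hold throughout the subadditivity chain $f(A\cup R)\le f(A)+f(R)\le n+c$. In particular $f(A) = n$.

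The main obstacle is really just Step 1, and it is not much of one: the argument uses the closure operator in a finite-type polymatroid (justified by the earlier lemma on closures), and every rank identity invoked is a two-element computation that falls immediately out of the PDG axioms. No induction on $n$ is needed; the forest structure is used only to guarantee the correct vertex count and to run the inductive traversal of each tree from its chosen root.
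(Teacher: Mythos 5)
Your proof is correct, and the core of it (the tree-traversal showing that once one basis element of a component is in the closure of $A$ plus that element, all of $B_V$ restricted to that component is) is the same connectivity argument the paper uses. Where you genuinely diverge is in the disconnected case: the paper first proves the connected case with a single auxiliary element $b_{i_1}$, and then handles a forest by adjoining elements $e_{i,j}$ along extra edges to connect it into a tree, deducing independence of the enlarged set and hence of $A$. You instead add one distinguished basis element $b_{r_k}$ per component and run a single counting argument, sandwiching $f(A\cup R)$ between $f(B_V)=n+c$ and $f(A)+f(R)\le n+c$. This buys you something real: in a \emph{possibly incoherent} PDG the elements $e_{i,j}$ for the adjoined edges need not belong to $E$ (condition (\ref{PDG:E}) is exactly what is dropped), so the paper's reduction to the connected case requires a word of justification there, whereas your argument uses only elements of $B\subseteq E$ and of $A$ itself and so applies verbatim. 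Two small points of hygiene: in Step 2, $B_V$ is contained in $\mathrm{cl}(A\cup R)$ rather than in $A\cup R$, so the inequality $f(A\cup R)\ge f(B_V)$ needs $f(A\cup R)=f(\mathrm{cl}(A\cup R))$ from the closure lemma, not bare monotonicity (the paper leans on the same fact); and when you invoke $f(b_u,s^{(t)}_{u,v})=2$ you should note that condition (\ref{PDG:nondegeneracy}) delivers this for both endpoints $u$ and $v$ of the edge via the involution convention $s_{j,i}=s_{i,j}^{-1}$, since the orientation of the pair $(i_t,j_t)$ is not under your control.
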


\begin{proof}
First assume $G$ is connected. Then it is a tree with $n$ edges,
and hence has $n+1$ vertices. Define $Z=\mathrm{cl}\left(A\cup\left\{ b_{i_{1}}\right\} \right)$.
It suffices to prove $Z\supseteq\left\{ b_{i}\mid i\in V(G)\right\} $,
because then 
\[
f\left(A\right)+f\left(b_{i_{1}}\right)\ge f\left(A\cup\left\{ b_{i_{1}}\right\} \right)\ge\left|V(G)\right|=n+1
\]
(where the first inequality holds by subadditivity). Since $f\left(b_{i_{1}}\right)=1$,
this yields $f(A)=n$.

To see this, observe that for any $s\in S$ and any $1\le i,j\le r$,
if $s_{i,j}\in E$ then $\mathrm{cl}\left(b_{i},s_{i,j}\right)\ni b_{j}$:
we have $f\left(b_{i},s_{i,j},b_{j}\right)=2=f\left(b_{i},s_{i,j}\right)$
(by conditions (\ref{PDG:frame},\ref{PDG:nondegeneracy}) of the
definition). In the same way, $\mathrm{cl}\left(b_{j},s_{i,j}\right)\ni b_{i}$.
Now we use a connectivity argument: let $W=\left\{ 1\le i\le r\mid b_{i}\in Z\right\} $.
If $W\subsetneq V(G)$ is a proper subset, there is some edge $\left\{ i_{t},j_{t}\right\} \in E(G)$
with precisely one endpoint in $W$, say $i_{t}$. Then $b_{i_{t}}\in Z$
and $s_{i_{t},j_{t}}\in Z$, so also $b_{j_{t}}\in\mathrm{cl}\left(Z\right)=Z$
(since $\mathrm{cl}^{2}=\mathrm{cl}$) and hence $j_{t}\in W$, contradicting
the choice of $\left\{ i_{t},j_{t}\right\} $. Hence $W=V(G)$. This
proves the connected case.

If $G$ is disconnected, it is a forest and adding some finite set
of edges $\left\{ i_{n+1},j_{n+1}\right\} $, ..., $\left\{ i_{n+m},j_{n+m}\right\} $
makes it a tree. The argument for the connected case applies to 
\[
A\cup\left\{ e_{i_{n+1},j_{n+1}},\ldots,e_{i_{n+m},j_{n+m}}\right\} ,
\]
which must have rank $n+m$ and is hence independent. So $f(A)=\left|A\right|$.
\end{proof}
The next lemma is rather specific, but together with the preceding
one it covers all cases we need.
\begin{lem}
\label[lemma]{lem:weak_pair_independence}Let $\left(E,f,S\right)$
be a (possibly incoherent) PDG. Let $s\in S$, let $1\le i\le r$,
and let $1\le j,k\le r$ (not necessarily distinct from $i$). If
$s_{j,k}\in E$ then $f\left(b_{i},s_{j,k}\right)=2$.
\end{lem}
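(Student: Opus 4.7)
The plan is to bound $f(b_i, s_{j,k})$ from above and below by $2$, splitting the lower bound into cases depending on whether $i$ coincides with $j$ or $k$. The upper bound $f(b_i, s_{j,k}) \le 2$ is immediate from subadditivity together with $f(b_i) = 1$ (condition (\ref{PDG:B})) and $f(s_{j,k}) = 1$ (condition (\ref{PDG:nondegeneracy})).

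If $i = j$, the lower bound is exactly condition (\ref{PDG:nondegeneracy}) applied to $s$ and the pair $(j,k)$. If $i = k$, I would set $t = s^{-1}$ and unwind the extension of the involution to check that $t_{k,j} = s_{j,k}$ regardless of whether $k < j$ or $k > j$; then (\ref{PDG:nondegeneracy}) applied to $t$ with indices $(k, j)$ yields $f(b_k, s_{j,k}) = 2$.

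Finally, if $i \notin \{j,k\}$, I would use (\ref{PDG:frame}) to pass through the basis. From $s_{j,k} \in \mathrm{cl}(b_j, b_k)$ we get $f(b_j, b_k, s_{j,k}) = f(b_j, b_k) = 2$, and combined with the case $i = j$, which gives $f(b_j, s_{j,k}) = 2$, this shows $b_k \in \mathrm{cl}(b_j, s_{j,k})$. Hence
\[
f(b_i, b_j, s_{j,k}) = f(b_i, b_j, b_k, s_{j,k}) = f(b_i, b_j, b_k) = 3,
\]
where the last equality uses (\ref{PDG:B}) applied to the three distinct basis elements $b_i, b_j, b_k$. Subadditivity now gives $f(b_i, s_{j,k}) + f(b_j) \ge f(b_i, b_j, s_{j,k}) = 3$, so $f(b_i, s_{j,k}) \ge 2$, matching the upper bound.

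The only mildly delicate step is the notational bookkeeping in the case $i = k$, where one must carefully unwind the extension of the involution in order to bring condition (\ref{PDG:nondegeneracy}) to bear; everything else reduces to standard manipulations with subadditivity, closure, and the basis axiom (\ref{PDG:B}).
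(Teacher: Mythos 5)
Your proof is correct and follows essentially the same route as the paper: handle the cases $i\in\{j,k\}$ via condition (\ref{PDG:nondegeneracy}) (with the involution bookkeeping you spell out), and otherwise observe that $b_k$ lies in the closure of $\left\{ b_i,b_j,s_{j,k}\right\}$ so that this set has rank $3$, whence $f\left(b_i,s_{j,k}\right)=2$ by subadditivity in both directions. The paper's version is just terser; no substantive difference.
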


\begin{proof}
If $j=i$ or $k=i$ this is condition (\ref{PDG:nondegeneracy}) of
the definition. Otherwise, $\mathrm{cl}\left(b_{i},b_{j},s_{j,k}\right)\supseteq\left\{ b_{i},b_{j},b_{k}\right\} $,
so $f\left(b_{i},b_{j},s_{j,k}\right)=3$, and hence $f\left(b_{i},s_{j,k}\right)=2$
(by subadditivity). 
\end{proof}
\begin{cor}
\label[corollary]{cor:pair_independence}Let $\left(E,f,S\right)$
be a (possibly incoherent) PDG. Any pair $\left\{ x,y\right\} $ of
elements of $E$ have $f\left(x,y\right)=2$ unless $x=s_{i,j}$ and
$y=s_{i,j}^{\prime}$ for some $s,s'\in S$ and $1\le i,j\le r$.
\end{cor}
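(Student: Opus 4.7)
The plan is to do a case analysis on the types of elements $x,y \in E$, since $E$ decomposes as $B \sqcup \bigsqcup_{1\le i<j\le r}\{s_{i,j}\mid s\in S\}$ (or a subset thereof, in the incoherent case). In each case I will either invoke one of the PDG axioms or one of the two preceding lemmas.

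First I would handle the ``trivial'' subcases. If both $x,y \in B$, say $x=b_i$ and $y=b_j$ with $i\ne j$, then condition (\ref{PDG:B}) gives $f(b_i,b_j)=2$. If exactly one of $x,y$ lies in $B$ (say $x=b_i$ and $y=s_{j,k}$ for some $s\in S$), then $f(b_i,s_{j,k})=2$ is exactly the content of \cref{lem:weak_pair_independence}.

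The remaining case is $x=s_{i,j}$ and $y=s'_{k,l}$ with $\{i,j\}\ne\{k,l\}$; this is the one case I must handle in order to reach the excluded configuration. Here I would apply \cref{lem:acyclic_independence} with $n=2$, taking $s^{(1)}=s$, $s^{(2)}=s'$, and the index pairs $\{i_1,j_1\}=\{i,j\}$, $\{i_2,j_2\}=\{k,l\}$. The associated subgraph $G$ of $K_r$ has two distinct edges, so it has at most one cycle's worth of edges only if those two edges coincide, which they do not; in fact two distinct edges in a simple graph can never form a cycle, so $G$ is acyclic. The lemma then gives $f(x,y)=2$.

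There is no genuine obstacle here: the statement is really just a convenient packaging of the previous two lemmas together with axiom (\ref{PDG:B}), and the only thing to check is that the enumeration of cases is exhaustive and that the excluded case $x=s_{i,j}$, $y=s'_{i,j}$ is precisely the configuration not covered by \cref{lem:acyclic_independence} (since then the graph $G$ would have a single repeated edge, violating the hypothesis $\{i_t,j_t\}\ne\{i_u,j_u\}$ for $t\ne u$). Combining the three cases yields the corollary.
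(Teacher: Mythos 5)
Your proposal is correct and follows essentially the same route as the paper: both split into the case where at least one element is a distinguished basis element (handled by condition (\ref{PDG:B}) and \cref{lem:weak_pair_independence}) and the case of two elements $s_{i,j}$, $s'_{k,l}$ with $\left\{i,j\right\}\neq\left\{k,l\right\}$ (handled by \cref{lem:acyclic_independence}, since two distinct edges cannot contain a cycle). Your write-up merely spells out the acyclicity check in more detail than the paper does.
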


\begin{proof}
If at least one of $x,y$ is a distinguished basis element (i.e. of
the form $b_{i}$) this follows from condition (\ref{PDG:B}) of the
definition and from \cref{lem:weak_pair_independence}. If $x$ and
$y$ are some $s_{i,j}$ and $s_{k,l}^{\prime}$, and $\left\{ i,j\right\} \neq\left\{ k,l\right\} $,
this follows from \cref{lem:acyclic_independence}. 
\end{proof}

\section{Products and associativity in partial Dowling geometries}

\label[section]{sec:PDG_groups}

Almost entropic partial Dowling geometries of rank $4$ carry algebraic
information: roughly speaking, and ignoring a technicality related
to parallel elements, if $\left(E,f,S\right)$ is a finite almost
entropic PDG, it has a (non-canonical, often infinite) extension $\left(\tilde{E},\tilde{f},G\right)$
with generating set $G\supseteq S$, satisfying that $G$ is a group
generated by $S$, and the multiplication operation can be read off
from the rank function: for each $g,g'\in G$ there is a unique relator
$\left(g,g',g''\right)$ of $\left(\tilde{E},\tilde{f},G\right)$,
and $g\cdot g'=g''{}^{-1}$ in $G$. If the original PDG is the one
associated to the group presentation $\left\langle S\mid R\right\rangle $
then $G$ is a quotient of $\left\langle S\mid R\right\rangle $,
and if $s,s'\in S$ are distinct then they map to distinct elements
of $G=\left\langle S\right\rangle $. (The precise statements are
close but not identical to this, see \cref{sec:group_from_geometric_product}.)

We begin by constructing a ``geometric product'' operation in \cref{sec:geometric_product_from_PDG}.
In \cref{sec:group_from_geometric_product} we show that the geometric
product is associative and use it to recover a group $G$.

\subsection{Desargues configurations and the geometric product}

\label[section]{sec:geometric_product_from_PDG}
\begin{defn}
Let $\left(E,f,S\right)$ be a PDG. A geometric product of $s,s'\in S$
is an element $t\in S$ such that $\left(s,s',t^{-1}\right)$ is a
relator of the PDG.

Note that such a $t\in S$ need not exist. If it exists it is nearly
unique, in the sense that $\mathrm{cl}(t)$ is precisely the set of
all geometric products: we prove this below in \cref{thm:PDG_prod_uniqueness}.
Also, a given finite almost entropic PDG of rank $4$ has an almost
entropic extension that is closed under the geometric product operation:
this is \cref{thm:prod_extension}.
\end{defn}

\begin{lem}
[A Desargues configuration in a PDG]\label[lemma]{lem:PDG_Desargues}
Let $\left(E,f,S\right)$ be a (possibly incoherent) PDG of rank $4$.
Let $\left(i,j,k,m\right)$ be a permutation of $\left(1,\ldots,4\right)$.
Let $s,u,w\in S$ and assume that there exist $t,v\in S$ such that
$\left(s^{-1},t,u\right)$ and $\left(u^{-1},v,w\right)$ are relators
of the PDG, or more generally (in the incoherent case) such that 
\[
w_{k,m},s_{i,m},u_{j,m},t_{i,j},v_{j,k}\in E
\]
and
\[
f\left(s_{m,i}^{-1},t_{i,j},u_{j,m}\right)=f\left(u_{m,j}^{-1},v_{j,k},w_{k,m}\right)=2.
\]
Then the triples $\left(b_{k},b_{i},b_{j}\right)$ and $\left(w_{k,m},s_{i,m},u_{j,m}\right)$
are in perspective from $b_{m}$, and they form a Desargues configuration
together with the intersection points $t_{i,j}\in\mathrm{cl}\left(s_{i,m},u_{j,m}\right)\cap\mathrm{cl}\left(b_{i},b_{j}\right)$
and $v_{j,k}\in$$\mathrm{cl}\left(u_{j,m},w_{k,m}\right)\cap\mathrm{cl}\left(b_{j},b_{k}\right)$.

In particular, if there exists $x\in S$ with $x_{k,i}\in E$ and
$f\left(w_{k,m},s_{i,m},x_{k,i}\right)=2$ then $f\left(t_{i,j},v_{j,k},x_{k,i}\right)=2$.
\end{lem}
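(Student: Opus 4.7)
The plan is to recognize the hypothesized configuration as a Desargues configuration in the sense of \cref{thm:aent_Desargues}, via the correspondence $O=b_m$, $(a_1,a_2,a_3)=(b_k,b_i,b_j)$, $(b_1,b_2,b_3)=(w_{k,m},s_{i,m},u_{j,m})$, $x_1=t_{i,j}$, $x_2=v_{j,k}$, and then (for the ``in particular'' clause) to invoke the final assertion of that theorem with $\tilde{x}_3=x_{k,i}$.

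First I would verify the perspective condition and the rank prerequisites of \cref{thm:aent_Desargues}. The three perspective identities $f(b_m,b_\ell,\cdot_{\ell,m})=2$ for $\ell\in\{k,i,j\}$ follow directly from the frame axiom (\ref{PDG:frame}): each of $w_{k,m},s_{i,m},u_{j,m}$ lies in the closure of the corresponding pair of distinguished basis elements. The singleton-rank hypothesis is axiom (\ref{PDG:nondegeneracy}); the rank-$4$ hypothesis $f(b_m,b_k,b_i,b_j)=4$ is axiom (\ref{PDG:B}); and every pair among the nine configuration elements has rank $2$: basis pairs by axiom (\ref{PDG:B}), basis-to-PDG pairs by \cref{lem:weak_pair_independence}, and PDG-to-PDG pairs by \cref{cor:pair_independence}, since the five index pairs $\{k,m\},\{i,m\},\{j,m\},\{i,j\},\{j,k\}$ are pairwise distinct (as $i,j,k,m$ are a permutation of $1,\dots,4$).

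Next, the two intersection-point conditions: for $x_1=t_{i,j}$, the containment $t_{i,j}\in\mathrm{cl}(b_i,b_j)$ is the frame axiom, while $t_{i,j}\in\mathrm{cl}(s_{i,m},u_{j,m})$ follows from the hypothesis $f(s_{m,i}^{-1},t_{i,j},u_{j,m})=2$ (using $s_{m,i}^{-1}=s_{i,m}$) together with $f(s_{i,m},u_{j,m})=2$ from \cref{cor:pair_independence}. The analogous check, applied to $u,v,w$, places $v_{j,k}$ in $\mathrm{cl}(b_j,b_k)\cap\mathrm{cl}(u_{j,m},w_{k,m})$. This completes the verification that we have a Desargues configuration in the prescribed sense.

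For the ``in particular'' claim, I would check that $\tilde{x}_3\coloneqq x_{k,i}$ satisfies every hypothesis of the last assertion of \cref{thm:aent_Desargues}: $f(b_k,b_i,x_{k,i})=2$ by the frame axiom; $f(w_{k,m},s_{i,m},x_{k,i})=2$ by hypothesis; $f(x_{k,i})=1$ and $f(x_{k,i},b_k)=2$ by axiom (\ref{PDG:nondegeneracy}); and $f(x_{k,i},w_{k,m})=2$ by \cref{cor:pair_independence}, since $\{k,i\}\neq\{k,m\}$. The theorem then directly yields $f(t_{i,j},v_{j,k},x_{k,i})=f(x_1,x_2,\tilde{x}_3)=2$, as required. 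The only obstacle is essentially bookkeeping---matching the PDG data to the abstract Desargues configuration and confirming all the pair-rank equalities for elements with overlapping index sets---with no new geometric input required beyond \cref{thm:aent_Desargues} and the pair-independence results of \cref{sec:PDG}.
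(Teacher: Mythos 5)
Your proposal is correct and follows essentially the same route as the paper's proof: verify the perspective, rank-$1$, rank-$4$, pair-rank, and intersection-point conditions from axioms (\ref{PDG:B}), (\ref{PDG:frame}), (\ref{PDG:nondegeneracy}) together with \cref{lem:weak_pair_independence} and \cref{cor:pair_independence}, then invoke the ``Further'' clause of \cref{thm:aent_Desargues} with $\tilde{x}_3=x_{k,i}$. The bookkeeping matches the paper's in every detail, including the observation that the five index pairs are distinct so \cref{cor:pair_independence} applies.
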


\begin{notation}
As a visual shorthand, we depict the situation of the lemma as follows:

\begin{center}
\begin{tikzpicture}[scale=1.5,
	vert/.style = {
		circle,
		minimum size = 2mm,
		fill=Cerulean!60,
		font=\itshape
	},
    midarrow/.style={
    postaction={decorate},
    decoration={
      markings,
      mark=at position 0.6 with {\arrow[scale=1.5]{>}}
        }
    },
    midarrowShort/.style={
    postaction={decorate},
    decoration={
      markings,
      mark=at position 0.7 with {\arrow[scale=1.5]{>}}
        }
    },
    midarrowLong/.style={
    postaction={decorate},
    decoration={
      markings,
      mark=at position 0.57 with {\arrow[scale=1.5]{>}}
        }
    }
    ]
    \node (v1)  at (-3,0)		    [vert,label=below:$b_i$]{};
    \node (v2)  at (0,0)			[vert,label=below:$b_j$]{};

    \node (v3)  at (-1.5,2.6)		[vert,label=left:$b_k$]{};
    \node (v4)  at (0,3)            [vert,label=right:$b_m$]{};

    \draw[midarrow]         (v1) -- (v2);
    \draw[midarrow]         (v3) -- (v1);
    \draw[midarrow]         (v2) -- (v3);
    \draw[midarrow]         (v2) -- (v4);
    \draw[midarrow]         (v1) -- (v4);
    \draw[midarrowShort]    (v3) -- (v4);
    
    \node (m23) at (-0.75,1.3)      [vert,label=left:$v$]{};
    \node (m13) at (-2.25,1.3)      [vert,label=left:$x$]{};
    \node (m12) at (-1.5,0)		    [vert,label=below:$t$]{};
    
    \node (m24) at (0,1.5)          [vert,label=right:$u$]{};
    \node (m14) at (-1.5,1.5)       [vert,label=below:$s$]{};
    \node (m34) at (-0.75,2.8)      [vert,label=above:$w$]{};

    \draw[BurntOrange,thick]
        (m23) to [bend left=50] (m12) 
        (m12) to [bend left=50] (m13) 
        (m13) to [bend left=50] (m23);
    \draw[RoyalBlue,thick]
        (m12) to [bend left=42] (m14)
        (m24) to [bend left=42] (m12);
    \draw[ForestGreen,thick]
        (m14) to [bend right=10] (m34)
        (m13) to [bend right=20] (m14);
    \draw[BrickRed,thick]
        (m34) to [bend left=20] (m24)
        (m24) to [bend left=30] (m23)
        (m23) to [bend left=20] (m34);
    \draw[dashed] (m13) circle[radius=0.175];
\end{tikzpicture}
\end{center}

Here the curves\footnote{Combinatorially, this really is a rank $4$ Desargues configuration,
and it can be realized in $\mathbb{R}^{3}$ with straight lines and
the same point-line incidences. The cost is that we would have to
draw the points $t,v,x$ outside the tetrahedron (but still on the
lines that their edges currently span). It seems more convenient to
have a compact depiction that stresses the combinatorial symmetries.} represent combinatorial lines (sets of rank $2$). The assumptions
to be checked and the conclusions for Desargues' theorem can be read
off from the diagram; this is intended to make the assumptions easy
to check and the conclusion easy to see.

The label $s$ on the segment between $b_{i},b_{m}$ is to be interpreted
as $s_{i,m}$, because of the arrow on the segment pointing in the
direction from $i$ to $m$; in the same way, the label $w$ is to
be interpreted as $w_{k,m}$, and $x$ is to be interpreted as $x_{k,i}$.
Note that the directions of the arrows may vary in different diagrams;
if the arrow on the edge between $b_{i},b_{m}$ were flipped, we would
read the label $s$ as $s_{m,i}=s_{i,m}^{-1}$. The point $x$ has
an exceptional role, and hence is circled by an additional dashed
line (the lemma doesn't assume it exists; when we use the lemma without
such an $x$ in sight, we draw the dashed circle as empty to emphasize
this). The lemma states that given all points in the diagram except
potentially $x$, if the curves through the points labelled $\left\{ s,t,u\right\} $
and $\left\{ u,v,w\right\} $ have rank $2$, the diagram is a Desargues
configuration in perspective from $b_{m}$, the apex of the drawing.
\Cref{thm:aent_Desargues} assures us we can extend the polymatroid
to an almost entropic one that contains an $x$ as drawn; and if such
an $x$ already exists, then $f\left(t_{i,j},v_{j,k},x_{k,i}\right)=2$
provided that also the green curve through $\left\{ s,w,x\right\} $
has rank $2$.

In other words, when an element playing the role of $x$ already exists,
we need to make sure that three of the four colorful curves (all but
the orange one, through $\left\{ t,v,x\right\} $) have rank $2$,
and we obtain the same statement for the orange curve.
\end{notation}

\begin{proof}
The two triples are in perspective from $b_{m}$: corresponding points
are of the form $b_{p}$ and $z_{p,m}$ for some $z\in S$ and $1\le p\le3$,
and $f\left(b_{m},z_{p,m},b_{p}\right)=2$ by conditions (\ref{PDG:B},\ref{PDG:frame})
of the definition of PDGs. The union of the first triple with $b_{m}$
has rank $4$ by condition (\ref{PDG:B}), because it is precisely
$b_{1},\ldots,b_{4}$.

We have $t_{i,j}\in\mathrm{cl}\left(b_{i},b_{j}\right)$ and $v_{j,k}\in\mathrm{cl}\left(b_{j},b_{k}\right)$
by condition (\ref{PDG:frame}) of the definition. That $t_{i,j}\in\mathrm{cl}\left(s_{i,m},u_{j,m}\right)$
follows from the assumption that $\left(s^{-1},t,u\right)$ is a relator
(or, in the incoherent case, the weaker assumption that $f\left(s_{m,i}^{-1},t_{i,j},u_{j,m}\right)=2$),
and the fact that $s_{m,i}^{-1}=s_{i,m}$. Hence $f\left(s_{i,m},u_{j,m},t_{i,j}\right)=f\left(s_{i,m},u_{j,m}\right)$.
In the same way we have $v_{j,k}\in\mathrm{cl}\left(u_{j,m},w_{k,m}\right)$.

Each point has rank $1$ by condition (\ref{PDG:nondegeneracy}) of
the definition. It remains to check that each pair of points in the
configuration has rank $2$. This follows from \cref{cor:pair_independence},
since for each $1\le p,q\le4$ there is at most one $z\in S$ with
$z_{p,q}$ in the configuration.

This shows that the claimed elements form a Desargues configuration.
Now assume there exists $x\in S$ with $x_{k,i}\in E$ and $f\left(x_{k,i},w_{k,m},s_{i,m}\right)=2$.
We also have $f\left(x_{k,i},b_{k},b_{i}\right)=2$ by condition (\ref{PDG:frame})
of the definition of PDGs, $f\left(x_{k,i},w_{k,m}\right)=2=f\left(x_{k,i},b_{k}\right)$
by \cref{cor:pair_independence}, and $f\left(x_{k,i}\right)=1$ by
condition (\ref{PDG:nondegeneracy}). By the ``Further, ...'' clause
of \cref{thm:aent_Desargues} we obtain $f\left(t_{i,j},v_{j,k},x_{k,i}\right)=2$.
\end{proof}
\begin{lem}
[Finding relators]\label[lemma]{lem:finding_relators} Let $\left(E,f,S\right)$
be an almost entropic PDG of rank $4$ and let $s,s'\in S$. If there
exists $s''\in S$ and distinct indices $1\le p,q,r\le4$ such that
$f\left(s_{p,q},s_{q,r}^{\prime},s_{r,p}^{\prime\prime}\right)=2$
then $\left(s,s',s''\right)$ is a relator of $\left(E,f,S\right)$.
\end{lem}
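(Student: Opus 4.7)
The plan is to verify each of the $24$ rank-$2$ conditions
\[
f(s_{i,j},s'_{j,k},s''_{k,i})=2
\]
required by the definition of ``$(s,s',s'')$ is a relator''---one condition for each ordered triple $(i,j,k)$ of distinct elements of $\{1,2,3,4\}$. The hypothesis of the lemma supplies one of these; the remaining $23$ will be derived by repeated application of \cref{lem:PDG_Desargues}, fed with the coherence relators $(t,t^{-1},e)$ that hold for every $t\in S$ in any PDG.

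For the first batch of derivations, let $m$ be the unique index in $\{1,2,3,4\}\setminus\{p,q,r\}$ and apply \cref{lem:PDG_Desargues} with $m$ as apex. By a judicious choice of the ``diagonal'' generators appearing in that lemma---drawn from the finite set $\{e\}\cup\{s^{\pm1},s'^{\pm1},s''^{\pm1}\}$---one can arrange for one of the two rank-$2$ hypotheses of \cref{lem:PDG_Desargues} to coincide with the given condition (after suitably relabeling the indices and using the identification $x_{a,b}=x^{-1}_{b,a}$), while the other rank-$2$ hypothesis and the existence condition for the intersection element $x$ both reduce to coherence relators. The conclusion of \cref{lem:PDG_Desargues} is then a new rank-$2$ condition $f(s_{i,j},s'_{j,k},s''_{k,i})=2$ at some triple in which $m$ appears. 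Varying which of $p,q,r$ plays each role in the setup produces further such triples, and after a short sequence of such applications the relator condition is established at every triple containing $m$.

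In a second batch of derivations these newly verified conditions on triangles through $m$ are themselves used as Desargues hypotheses in further applications, now with the apex taken in $\{p,q,r\}$; the conclusions yield the remaining rank-$2$ conditions on the original triangle $\{p,q,r\}$, including the cyclically shifted and orientation-reversed placements of $(s,s',s'')$ on its edges. Once all $24$ conditions are in hand, $(s,s',s'')$ is a relator by definition. The main obstacle is the combinatorial bookkeeping: for each target rank-$2$ condition one must exhibit a Desargues configuration all three of whose preconditions are already known, as either the given condition, a coherence relator, or a previously derived condition. The rank-$4$ assumption is essential because Desargues' apex must be a fourth vertex outside the triangle under derivation, and the almost entropic hypothesis is precisely what permits \cref{lem:PDG_Desargues} to be invoked.
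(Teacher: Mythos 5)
Your overall strategy --- bootstrapping from the single given rank-$2$ condition to all of them by repeatedly invoking \cref{lem:PDG_Desargues} seeded with the coherence relators $(t,t^{-1},e)$ --- is the same as the paper's, but the geometry of your applications is inverted, and as described the first batch cannot be carried out. In \cref{lem:PDG_Desargues} the apex is $b_m$, and \emph{all three} hypothesis conditions (the two rank-$2$ conditions defining the intersection points $t_{i,j}$, $v_{j,k}$, \emph{and} the existence condition $f(w_{k,m},s_{i,m},x_{k,i})=2$) live on the three triangles $\{i,j,m\}$, $\{j,k,m\}$, $\{i,k,m\}$ through the apex; only the conclusion $f(t_{i,j},v_{j,k},x_{k,i})=2$ lives on the opposite triangle $\{i,j,k\}$. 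You take the apex to be the index $m\notin\{p,q,r\}$ and ask the given condition $f(s_{p,q},s'_{q,r},s''_{r,p})=2$ --- which lives on the triangle $\{p,q,r\}$ \emph{not} containing that apex --- to serve as one of the two rank-$2$ hypotheses. No relabeling can achieve this. The paper does the opposite: it sets $i=p$, $m=q$, $k=r$, so the apex is $b_q\in\{b_p,b_q,b_r\}$; the given condition plays the role of the \emph{existence} hypothesis for $x=s''$, both rank-$2$ intersection hypotheses are the coherence relators $(s^{-1},s,e)$ and $(e,s',s'^{-1})$, and the conclusion lands on the triangle $\{p,j,r\}$ containing the missing index $j$. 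Your second batch is inverted in the same way: with the apex in $\{p,q,r\}$ the conclusion triangle always contains $m$, so it can never ``yield the remaining rank-$2$ conditions on the original triangle $\{p,q,r\}$.''

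Beyond the inversion, the proposal defers exactly the part that constitutes the proof. Each application of \cref{lem:PDG_Desargues} with coherence relators as the two intersection hypotheses implements only one move --- from ``$(s,s',s'')$ holds at $(i,m,k)$'' to ``$(s,s',s'')$ holds at $(i,j,k)$,'' i.e.\ replacing one index of the triple by the missing one --- and one must exhibit an explicit schedule of such moves reaching every ordered index triple. The paper does this by combining that move with cyclic shifts of good index triples and listing representatives of all eight cyclic classes. Asserting that ``a judicious choice'' and ``a short sequence of applications'' will produce all $24$ conditions restates the problem rather than solving it; without the corrected apex placement and the explicit sequence of configurations, the argument is not a proof.
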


\begin{proof}
Denote $i=p$, $m=q$, $k=r$, and let $1\le j\le4$ be the unique
index not in $\left(p,q,r\right)$. We apply \cref{lem:PDG_Desargues}
with $\left(s,u,w\right)=\left(s,e,s^{\prime-1}\right)$, $t=s$,
and $v=s'$ (note that $\left(s^{-1},t,u\right)=\left(s^{-1},s,e\right)$
and $\left(u^{-1},v,w\right)=\left(e,s',s^{\prime-1}\right)$ are
both relators). We obtain a Desargues configuration. Denoting $x=s^{\prime\prime}$,
we have 
\[
f\left(s_{k,m}^{\prime-1},s_{i,m},x_{k,i}\right)=f\left(s_{i,m},s_{m,k}^{\prime},s_{k,i}^{\prime\prime}\right)=2,
\]
and \cref{lem:PDG_Desargues} implies that also $f\left(s_{i,j},s_{j,k}^{\prime},s_{k,i}^{\prime\prime}\right)=2$. 

\begin{center}
\begin{tikzpicture}[scale=1.5,
	vert/.style = {
		circle,
		minimum size = 2mm,
		fill=Cerulean!60,
		font=\itshape
	},
    midarrow/.style={
    postaction={decorate},
    decoration={
      markings,
      mark=at position 0.6 with {\arrow[scale=1.5]{>}}
        }
    },
    midarrowShort/.style={
    postaction={decorate},
    decoration={
      markings,
      mark=at position 0.7 with {\arrow[scale=1.5]{>}}
        }
    },
    midarrowLong/.style={
    postaction={decorate},
    decoration={
      markings,
      mark=at position 0.57 with {\arrow[scale=1.5]{>}}
        }
    }
    ]
    \node (v1)  at (-3,0)		    [vert,label=below:$b_p$]{};
    \node (v2)  at (0,0)			[vert,label=below:$b_j$]{};

    \node (v3)  at (-1.5,2.6)		[vert,label=left:$b_r$]{};
    \node (v4)  at (0,3)            [vert,label=right:$b_q$]{};

    \draw[midarrow]         (v1) -- (v2);
    \draw[midarrow]         (v3) -- (v1);
    \draw[midarrow]         (v2) -- (v3);
    \draw[midarrow]         (v2) -- (v4);
    \draw[midarrow]         (v1) -- (v4);
    \draw[midarrowShort]    (v3) -- (v4);
    
    \node (m23) at (-0.75,1.3)      [vert,label=left:$s'$]{};
    \node (m13) at (-2.25,1.3)      [vert,label=left:$s''$]{};
    \node (m12) at (-1.5,0)		    [vert,label=below:$s$]{};
    
    \node (m24) at (0,1.5)          [vert,label=right:$e$]{};
    \node (m14) at (-1.5,1.5)       [vert,label=below:$s$]{};
    \node (m34) at (-0.75,2.8)      [vert,label=above:$s^{\prime -1}$]{};
    
    \draw[BurntOrange,thick]
        (m23) to [bend left=50] (m12) 
        (m12) to [bend left=50] (m13) 
        (m13) to [bend left=50] (m23);
    \draw[RoyalBlue,thick]
        (m12) to [bend left=42] (m14)
        (m24) to [bend left=42] (m12);
    \draw[ForestGreen,thick]
        (m14) to [bend right=10] (m34)
        (m13) to [bend right=20] (m14);
    \draw[BrickRed,thick]
        (m34) to [bend left=20] (m24)
        (m24) to [bend left=30] (m23)
        (m23) to [bend left=20] (m34);
    \draw[dashed] (m13) circle[radius=0.175];
\end{tikzpicture}
\end{center}

Let us say that an index triple $\left(i,j,k\right)$ is \emph{good}
if $f\left(s_{i,j},s_{j,k}^{\prime},s_{k,i}^{\prime\prime}\right)=2$.
Our argument shows that if a triple $\left(i,j,k\right)$ is good
and $m\in\{1,\ldots,4\}$ is the index not appearing in $\left(i,j,k\right)$
then also $\left(i,m,k\right)$ is good (in terms of the notation
above, we started by assuming the triple $\left(i,m,k\right)$ is
good and concluded that $\left(i,j,k\right)$ is; the statement here
is identical up to relabeling). Note that a cyclic shift of a good
triple is also good. We'll show that if some triple $\left(i,j,k\right)$
is good then they all are.

Starting from the three cyclic shifts of $\left(i,j,k\right)$ and
applying this result to each, we find that also $\left(i,m,k\right)$,
$\left(k,m,j\right)$, and $\left(j,m,i\right)$ are good. Applying
it again to the cyclic shifts of $\left(k,m,j\right)$, we find that
$\left(j,i,m\right)$ and $\left(m,i,k\right)$ are good. Finally,
apply the result to the cyclic shift $\left(k,m,i\right)$ of $\left(m,i,k\right)$
to obtain $\left(k,j,i\right)$, and also to the cyclic shift $\left(j,i,k\right)$
of $\left(i,k,j\right)$ to obtain $\left(j,m,k\right)$.

Taking the lexicographically-first cyclic shift of each triple we
found (in the same order in which we found them), we have:
\[
\left(i,j,k\right),\quad\left(i,m,k\right),\quad\left(j,k,m\right),\quad\left(i,j,m\right),\quad\left(i,m,j\right),\quad\left(i,k,m\right),\quad\left(i,k,j\right),\quad\left(j,m,k\right),
\]
and these are representatives for all eight triples of distinct indices
in $\left\{ i,j,k,m\right\} $ modulo cyclic shift.
\end{proof}
\begin{thm}
[Uniqueness up to parallelism]\label[theorem]{thm:PDG_prod_uniqueness}
Let $\left(E,f,S\right)$ be an almost entropic PDG of rank $4$ and
let $s,s'\in S$. If there exist $t,t'\in S$ which are both geometric
products of $s,s'$ then $f\left(t_{i,j},t_{i,j}^{\prime}\right)=1$
for all distinct $1\le i,j\le4$.

In the other direction, if $t\in S$ is a geometric product of $s,s'$
and f$\left(t_{i,j},t_{i,j}^{\prime}\right)=1$ for some $1\le i,j\le4$
then $t^{\prime}$ is also a geometric product of $s,s'$. In particular,
$f\left(t_{k,l},t_{k,l}^{\prime}\right)=1$ for all $1\le k,l\le4$. 
\end{thm}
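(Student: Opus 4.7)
My plan is to prove the two directions separately, with quite different ingredients. The forward direction is purely polymatroidal. I would fix distinct $i,j \in \{1,\ldots,4\}$ and pick a third index $k \in \{1,\ldots,4\} \setminus \{i,j\}$, then evaluate the assumed relators $(s,s',t^{-1})$ and $(s,s',t^{\prime-1})$ at the triple $(i,k,j)$. Using $(t^{-1})_{j,i} = t_{i,j}$, these read $f(s_{i,k}, s'_{k,j}, t_{i,j}) = 2 = f(s_{i,k}, s'_{k,j}, t'_{i,j})$, while the PDG frame condition also places $t_{i,j}, t'_{i,j} \in \mathrm{cl}(b_i,b_j)$. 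So both points lie in each of the rank-$2$ flats $\mathrm{cl}(b_i,b_j)$ and $\mathrm{cl}(s_{i,k}, s'_{k,j})$. I would then apply submodularity to $B := \{b_i,b_j,t_{i,j},t'_{i,j}\}$ and $C := \{s_{i,k},s'_{k,j},t_{i,j},t'_{i,j}\}$: each has rank $2$, while $B \cup C \supseteq \{b_i,s_{i,k},s'_{k,j}\}$, whose closure equals $\mathrm{cl}(b_i,b_j,b_k)$ of rank $3$ (by the connectivity argument used in \cref{lem:acyclic_independence}). Submodularity then yields $f(t_{i,j},t'_{i,j}) \le f(B) + f(C) - f(B \cup C) \le 1$, and the matching lower bound $f(t_{i,j},t'_{i,j}) \ge 1$ is immediate.

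For the converse I plan to reduce to \cref{lem:finding_relators}. Assuming $(s,s',t^{-1})$ is a relator and $f(t_{i_0,j_0},t'_{i_0,j_0}) = 1$ for some distinct $i_0, j_0$, the elements $t_{i_0,j_0}$ and $t'_{i_0,j_0}$ are parallel: each has rank $1$ by the PDG nondegeneracy condition, so they share a common rank-$1$ closure. The standard consequence is parallel substitution: $f(A \cup \{t_{i_0,j_0}\}) = f(A \cup \{t'_{i_0,j_0}\})$ for every $A \subseteq E$, since both equal $f(\mathrm{cl}(A \cup \{t_{i_0,j_0}, t'_{i_0,j_0}\}))$. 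Choosing $k \in \{1,\ldots,4\} \setminus \{i_0,j_0\}$, the known relator gives $f(s_{i_0,k}, s'_{k,j_0}, t_{i_0,j_0}) = 2$, and substitution yields $f(s_{i_0,k}, s'_{k,j_0}, t'_{i_0,j_0}) = 2$. Rewriting the last entry as $t^{\prime-1}_{j_0,i_0}$, this is precisely the hypothesis of \cref{lem:finding_relators} for the triple $(s,s',t^{\prime-1})$ at indices $(i_0, k, j_0)$, so the lemma supplies the full relator and shows that $t'$ is a geometric product of $s,s'$. The ``in particular'' conclusion then comes from applying the forward direction to $t$ and $t'$.

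The structural point is that the first half of the theorem needs only submodularity and the PDG axioms, while almost entropicity enters only through the black-box \cref{lem:finding_relators} (which in turn draws on \cref{thm:aent_Desargues} via \cref{lem:PDG_Desargues}). The only mildly delicate step will be the parallel substitution: it uses that $t_{i_0,j_0}$ and $t'_{i_0,j_0}$ each have rank $1$ individually and together span a rank-$1$ flat, so that each lies in the closure of the other. I do not anticipate any serious obstacle beyond unwinding these ingredients in the right order.
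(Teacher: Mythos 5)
Your proposal is correct and follows essentially the same route as the paper: the forward direction is the same ``two lines whose union spans rank $3$ meet in rank at most $1$'' computation (the paper packages it via diminishing returns against the flat $\mathrm{cl}\left(b_{i},b_{j}\right)$ and the relator $\left(t,s^{\prime-1},s^{-1}\right)$, you via submodularity on the two four-element sets coming from the relator $\left(s,s',t^{-1}\right)$ at $\left(i,k,j\right)$, which is equivalent), and the backward direction is identical to the paper's, substituting the parallel element into a known relator and invoking \cref{lem:finding_relators}. The only small checks worth writing out are $f\left(s_{i,k},s_{k,j}^{\prime}\right)=2$ (from \cref{lem:acyclic_independence}), so that the relator really places $t_{i,j},t_{i,j}^{\prime}$ in the closure of that pair, and the monotonicity of closure underlying your parallel-substitution step; both go through.
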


\begin{proof}
Let $1\le k\le4$ be an index distinct from $i$ and $j$.

Observe that $f\left(t_{i,j},t_{i,j}^{\prime},u_{j,k}\right)=f\left(t_{i,j},t_{i,j}^{\prime}\right)+f\left(u_{j,k}\right)$
for any $u\in S$: we have 
\[
\mathrm{cl}\left(b_{i},b_{j}\right)\cup\left\{ u_{j,k}\right\} \subset\mathrm{cl}\left(b_{i},b_{j},u_{j,k}\right)=\mathrm{cl}\left(b_{i},b_{j},b_{k}\right),
\]
(because $b_{k}\in\mathrm{cl}\left(b_{j},u_{j,k}\right)$ and also
$u_{j,k}\in\mathrm{cl}\left(b_{j},b_{k}\right)$,) and hence 
\[
f\left(\mathrm{cl}\left(b_{i},b_{j}\right)\cup\left\{ u_{j,k}\right\} \right)-f\left(\mathrm{cl}\left(b_{i},b_{j}\right)\right)=f\left(b_{i},b_{j},b_{k}\right)-f\left(b_{i},b_{j}\right)=1.
\]
Since $t_{i,j},t_{i,j}^{\prime}\in\mathrm{cl}\left(b_{i},b_{j}\right)$,
it follows that $f\left(t_{i,j},t_{i,j}^{\prime},u_{j,k}\right)-f\left(t_{i,j},t_{i,j}^{\prime}\right)\ge1=f\left(u_{j,k}\right)$
by the diminishing returns property, but also 
\[
f\left(t_{i,j},t_{i,j}^{\prime},u_{j,k}\right)-f\left(t_{i,j},t_{i,j}^{\prime}\right)\le f\left(u_{j,k}\right)-f\left(\emptyset\right)=1.
\]

Now observe that since $\left(s,s',t^{-1}\right)$ and $\left(s,s',t^{\prime-1}\right)$
are relators of the PDG, so are $\left(t,s^{\prime-1},s^{-1}\right)$
and $\left(t^{\prime},s^{\prime-1},s^{-1}\right)$. Specializing to
$u_{j,k}=s_{j,k}^{\prime-1}$, we find that 
\[
\mathrm{cl}\left(t_{i,j},s_{j,k}^{\prime-1}\right)\supseteq\left\{ t_{i,j},s_{j,k}^{\prime-1},s_{k,i}^{-1}\right\} 
\]
and since $t_{i,j}^{\prime}\in\mathrm{cl}\left(s_{j,k}^{\prime-1},s_{k,i}^{-1}\right)$
we also have $t_{i,j}^{\prime}\in\mathrm{cl}\left(t_{i,j}s_{j,k}^{\prime-1}\right)$.
Hence
\[
2=f\left(t_{i,j},s_{j,k}^{\prime-1}\right)=f\left(t_{i,j},t_{i,j}^{\prime},s_{j,k}^{\prime-1}\right)=f\left(t_{i,j},t_{i,j}^{\prime}\right)+f\left(s_{j,k}^{\prime-1}\right)=f\left(t_{i,j},t_{i,j}^{\prime}\right)+1
\]
where $2=f\left(t_{i,j},s_{j,k}^{\prime-1}\right)$ by \cref{cor:pair_independence},
and $f\left(s_{j,k}^{\prime-1}\right)=1$ by (\ref{PDG:nondegeneracy}).
Therefore $f\left(t_{i,j},t_{i,j}^{\prime}\right)=1$.

In the other direction, suppose $t$ is a geometric product of $s,s'$
and $f\left(t_{i,j},t_{i,j}^{\prime}\right)=1$. Then $f\left(t_{i,j}^{\prime},s_{j,k}^{\prime-1},s_{k,i}^{-1}\right)=f\left(t_{i,j},s_{j,k}^{\prime-1},s_{k,i}^{-1}\right)=2$,
where the first equality holds because the closures of the two sets
are equal. By \cref{lem:finding_relators}, $\left(t^{\prime},s^{\prime-1},s^{-1}\right)$
is a relator, and hence so is $\left(s,s^{\prime},t^{\prime-1}\right)$.
\end{proof}
\begin{lem}
Let $\left(E,f,S\right)$ be an almost entropic PDG of rank $4$.
Let $s,s'\in S$ (not necessarily distinct). Then there exists an
almost entropic rank $4$ PDG $\left(\tilde{E},\tilde{f},\tilde{S}\right)$
which is an extension of $\left(E,f,S\right)$ and an element $t\in\tilde{S}$
such that $\left(s,s',t\right)$ is a relator of the extension.
\end{lem}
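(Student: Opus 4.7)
First dispose of the trivial cases. If $s = e$ take $t = s'^{-1}$; if $s' = e$ take $t = s^{-1}$; if $s' = s^{-1}$ take $t = e$. In each case $(s, s', t)$ is already a relator of $(E, f, S)$ (a cyclic shift of a coherence relator \ref{PDG:coherence} applied to $s$ or $s'$), and we may take $(\tilde E, \tilde f, \tilde S) = (E, f, S)$. Henceforth assume $s, s' \ne e$ and $s' \ne s^{-1}$, so the required $t$ is a genuinely new generator.

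The construction proceeds by iterating \cref{lem:PDG_Desargues} six times, once per unordered pair $\{p, q\} \subset \{1, \ldots, 4\}$. For a fixed pair $\{p, q\}$ with complementary indices $\{j, m\}$, apply \cref{lem:PDG_Desargues} with the permutation $(i, j, k, m) = (q, j, p, m)$, taking $u = e$ and $w = s'^{-1}$. The choice $u = e$ makes $(s^{-1}, t, u) = (s^{-1}, s, e)$ and $(u^{-1}, v, w) = (e, s', s'^{-1})$ automatic relators, forcing $t = s$ and $v = s'$ in the lemma's notation. The lemma then exhibits a valid Desargues configuration with apex $b_m$, perspective triples $(b_p, b_q, b_j)$ and $(s'^{-1}_{p,m}, s_{q,m}, e_{j,m})$, and intersection points $s_{q,j}$ and $s'_{j,p}$. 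By \cref{thm:aent_Desargues} we may extend the current almost entropic polymatroid to include a new rank-$1$ point on $\mathrm{cl}(b_p, b_q) \cap \mathrm{cl}(s_{q,j}, s'_{j,p})$; call it $t_{p,q}$. Iterating over all six pairs produces the six elements of the new generator $t$; a parallel iteration with $s, s'$ replaced by $s'^{-1}, s^{-1}$ produces the elements of $t^{-1}$. Each extension step preserves almost entropy.

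Let $\tilde E$ be the result and $\tilde S = S \cup \{t, t^{-1}\}$ with the convention $t_{q,p} = t^{-1}_{p,q}$. Conditions (\ref{PDG:frame}) and (\ref{PDG:nondegeneracy}) hold by construction, pairwise rank conditions between the new elements and others follow from \cref{cor:pair_independence}, and the only non-obvious axiom is coherence (\ref{PDG:coherence}), i.e. that $(t, t^{-1}, e)$ is a relator of $(\tilde E, \tilde f, \tilde S)$. This is enforced using the \emph{Further} clause of \cref{thm:aent_Desargues}: when each later $t_{p,q}$ is added we use the clause to absorb any parallel candidate already built into the Desargues intersection point, which propagates the coherence relation across the pieces. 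Once $(\tilde E, \tilde f, \tilde S)$ is verified to be an almost entropic rank-$4$ PDG, the relator $(s, s', t)$ follows from \cref{lem:finding_relators}: we have $\tilde f(s_{q,j}, s'_{j,p}, t_{p,q}) = 2$ for the specific triple used to build $t_{p,q}$, and the lemma upgrades this to all distinct index triples.

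The main obstacle is coherence. Six independent Desargues constructions give six new points each individually correct but not a priori forming a coherent ``group generator''; forcing $(t, t^{-1}, e)$ to be a relator of the final object is a global compatibility condition across the six independent choices. The \emph{Further} clause of \cref{thm:aent_Desargues}, which lets a Desargues extension subsume an existing candidate intersection point (declaring the two parallel), is the device that permits the iterated construction to respect this global compatibility.
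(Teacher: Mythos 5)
Your construction phase matches the paper's: the same family of Desargues extensions (one per ordered index pair, seeded by the relators $(s^{-1},s,e)$ and $(e,s',s'^{-1})$), the same verification that the result is an incoherent PDG, and the same use of \cref{lem:finding_relators} to upgrade a single instance $\tilde f(s_{q,j},s'_{j,p},t_{p,q})=2$ to the full relator $(s,s',t)$. You also correctly identify coherence of $t$ --- condition (\ref{PDG:coherence}), i.e.\ $\tilde f(t_{i,j},t^{-1}_{j,k},e_{k,i})=2$ for all distinct $i,j,k$ --- as the main obstacle. But your proposed mechanism for it does not work. The ``Further'' clause of \cref{thm:aent_Desargues} lets you identify a newly created intersection point with a \emph{pre-existing} point lying on the same two lines; since your construction creates exactly one new point per ordered pair, there is no ``parallel candidate already built'' to absorb, and in any case absorbing duplicates would only give statements of the form $f(t_{p,q},\tilde t_{p,q})=1$. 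Coherence is a different kind of statement: a three-point collinearity relating $t_{i,j}$, $t^{-1}_{j,k}$ and the pre-existing point $e_{k,i}$, and it does not follow from any of the configurations you set up (your $t$- and $t^{-1}$-points are built by independent, merely inverted, copies of the same seed configuration, with nothing tying either to $e_{k,i}$).

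The paper closes this gap with a second, genuinely different round of Desargues configurations. Having established that $(s,s',t)$ is a relator of the incoherent extension, one also has the relators $(s',t,s)$ and $(s^{-1},t^{-1},s'^{-1})$. For each triple $(i,j,k)$ with missing index $m$, \cref{lem:PDG_Desargues} is applied to the triples $(b_k,b_i,b_j)$ and $(s'_{m,k},s'_{m,i},s_{j,m})$ in perspective from $b_m$, with intersection points $t_{i,j}\in\mathrm{cl}(s'_{m,i},s_{j,m})$ and $t^{-1}_{j,k}\in\mathrm{cl}(s_{j,m},s'_{m,k})$. The role of the pre-existing point $x$ in the ``Further'' clause is now played by $e_{k,i}$, which lies on $\mathrm{cl}(s'_{m,k},s'_{m,i})$ by coherence of $s'$ itself; the clause then yields $\tilde f(t_{i,j},t^{-1}_{j,k},e_{k,i})=2$. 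In short, the coherence of the new generator is inherited from the coherence of $s'$ via one more Desargues argument --- this is the step your proposal is missing.
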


\begin{proof}
If there is already a $t\in S$ such that $\left(s,s',t\right)$ is
a relator, we are done (the wanted PDG is $\left(E,f,S\right)$).
Otherwise we do this in two steps: first we construct an extension
and then prove that it is the required PDG.

The extension is constructed as follows: we use the relations $\left(s^{-1},s,e\right)$
and $\left(e,s^{\prime},s^{\prime-1}\right)$ and apply Desargues'
theorem several times to enlarge the polymtaroid. For each ordered
triple $\left(i,j,k\right)$ of distinct indices in $\{1,\ldots,4\}$,
let $m$ denote the missing index. \Cref{lem:PDG_Desargues} implies
that $\left(b_{k},b_{i},b_{j}\right)$ and $\left(s_{k,m}^{\prime-1},s_{i,m},e_{j,m}\right)$
are in perspective from $b_{m}$, and together with the intersection
points $s_{i,j}\in\mathrm{cl}\left(s_{i,m},e_{j,m}\right)$ and $s_{j,k}^{\prime}\in\mathrm{cl}\left(s_{k,m}^{\prime-1},e_{j,m}\right)$
they form a Desargues configuration:

\begin{center}
\begin{tikzpicture}[scale=1.5,
	vert/.style = {
		circle,
		minimum size = 2mm,
		fill=Cerulean!60,
		font=\itshape
	},
    midarrow/.style={
    postaction={decorate},
    decoration={
      markings,
      mark=at position 0.6 with {\arrow[scale=1.5]{>}}
        }
    },
    midarrowShort/.style={
    postaction={decorate},
    decoration={
      markings,
      mark=at position 0.7 with {\arrow[scale=1.5]{>}}
        }
    },
    midarrowLong/.style={
    postaction={decorate},
    decoration={
      markings,
      mark=at position 0.57 with {\arrow[scale=1.5]{>}}
        }
    }
    ]
    \node (v1)  at (-3,0)		    [vert,label=below:$b_i$]{};
    \node (v2)  at (0,0)			[vert,label=below:$b_j$]{};

    \node (v3)  at (-1.5,2.6)		[vert,label=left:$b_k$]{};
    \node (v4)  at (0,3)            [vert,label=right:$b_m$]{};

    \draw[midarrow]         (v1) -- (v2);
    \draw[midarrow]         (v3) -- (v1);
    \draw[midarrow]         (v2) -- (v3);
    \draw[midarrow]         (v2) -- (v4);
    \draw[midarrow]         (v1) -- (v4);
    \draw[midarrowShort]    (v3) -- (v4);

    \node (m23) at (-0.75,1.3)      [vert,label=left:$s'$]{};
    \node (m13) at (-2.25,1.3)      [label=left:$x$]{};
    \node (m12) at (-1.5,0)		    [vert,label=below:$s$]{};
    
    \node (m24) at (0,1.5)          [vert,label=right:$e$]{};
    \node (m14) at (-1.5,1.5)       [vert,label=below:$s$]{};
    \node (m34) at (-0.75,2.8)      [vert,label=above:$s^{\prime -1}$]{};
	    
    \draw[BurntOrange,thick]
        (m23) to [bend left=50] (m12) 
        (m12) to [bend left=50] (m13) 
        (m13) to [bend left=50] (m23);
    \draw[RoyalBlue,thick]
        (m12) to [bend left=42] (m14)
        (m24) to [bend left=42] (m12);
    \draw[ForestGreen,thick]
        (m14) to [bend right=10] (m34)
        (m13) to [bend right=20] (m14);
    \draw[BrickRed,thick]
        (m34) to [bend left=20] (m24)
        (m24) to [bend left=30] (m23)
        (m23) to [bend left=20] (m34);
    \draw[dashed] (m13) circle[radius=0.175];
\end{tikzpicture}
\end{center}

Desargues' theorem (\cref{thm:aent_Desargues}) guarantees that there
exists an extension $\left(\tilde{E},\tilde{f}\right)$ of $\left(E,f\right)$
with a new point $x_{k,i}\in\tilde{E}$ that satisfies: 
\[
\begin{cases}
\tilde{f}(x_{k,i})=1,\\
\tilde{f}\left(b_{k},b_{i},x_{k,i}\right)=2=\tilde{f}\left(s_{k,m}^{\prime-1},s_{i,m},x_{k,i}\right),\\
\tilde{f}\left(b_{k},x_{k,i}\right)=2=\tilde{f}\left(s_{k,m}^{\prime-1},x_{k,i}\right),\\
\tilde{f}\left(s_{i,j},s_{j,k}^{\prime},x_{k,i}\right)=2.
\end{cases}
\]

We iteratively extend $\left(E,f\right)$ in this way: for each ordered
index pair $\left(k,i\right)$ in turn, we choose $j$ arbitrarily
among the missing indices $\left\{ 1,\ldots,4\right\} \setminus\left\{ i,k\right\} $
and construct the extension as above. At the end of this process we
have an extension $\left(\tilde{E},\tilde{f}\right)$ with $12$ additional
elements (we have added $x_{i,j}$ and $x_{j,i}$ in the closure of
each pair $\left\{ b_{i},b_{j}\right\} $). 

Define $\tilde{S}=S\sqcup\left\{ t,t^{-1}\right\} $, set $t_{i,j}=x_{i,j}$
and similarly $t_{i,j}^{-1}=x_{j,i}$ whenever $1\le i<j\le r$. Extend
the involution from $S$ to $\tilde{S}$ in the obvious way, (so $\left(t\right)^{-1}=t^{-1}$
and $\left(t_{i,j}\right)^{-1}=\left(t^{-1}\right)_{i,j}$,) and define
$t_{j,i}=t_{i,j}^{-1}$ as well as $\left(t^{-1}\right)_{j,i}=t_{i,j}$
whenever $i<j$. Then $\left(\tilde{E},\tilde{f},\tilde{S}\right)$
is a PDG: conditions (\ref{PDG:S},\ref{PDG:B},\ref{PDG:E}) are clear
from the corresponding conditions for $\left(E,f,S\right)$. Condition
(\ref{PDG:nondegeneracy}) is the statement that $\tilde{f}\left(t_{i,j}\right)=\tilde{f}\left(x_{i,j}\right)=1$
and $\tilde{f}\left(b_{i},t_{i,j}\right)=\tilde{f}\left(b_{i},x_{i,j}\right)=2$
for all $i,j$: these conditions are part of the defining conditions
of $x_{i,j}$ (from our application of Desargues' theorem). Condition
(\ref{PDG:frame}) is that $t_{i,j}\in\mathrm{cl}\left(b_{i},b_{j}\right)$
for all $i,j$, and this is equivalent to $\tilde{f}\left(b_{i},b_{j},x_{i,j}\right)=2$,
another of the defining conditions of $t_{i,j}=x_{i,j}$. 

This proves $\left(\tilde{E},\tilde{f},\tilde{S}\right)$ is an incoherent
PDG. We are left to prove coherence, i.e. condition (\ref{PDG:coherence}),
which is the statement that
\[
\tilde{f}\left(t_{i,j},t_{j,k}^{-1},e_{k,i}\right)=2
\]
for all $1\le i,j,k\le4$. Observe that $\left(s,s',t\right)$ is
a relator of $\left(\tilde{E},\tilde{f},\tilde{S}\right)$. Hence
$\left(s',t,s\right)$ and $\left(s^{-1},t^{-1},s^{\prime-1}\right)$
are also relators. Let $\left(i,j,k\right)$ be a triple of distinct
indices, and let $1\le m\le4$ be the missing index. By \cref{lem:PDG_Desargues},
the triples $\left(b_{k},b_{i},b_{j}\right)$ and $\left(s_{m,k}^{\prime},s_{m,i}^{\prime},s_{j,m}\right)$
are in perspective from $b_{m}$, and they form a Desargues configuration
with the intersection points $t_{i,j}\in\mathrm{cl}\left(b_{i},b_{j}\right)\cap\mathrm{cl}\left(s_{m,i}^{\prime},s_{j,m}\right)$
and $t_{j,k}^{-1}\in\mathrm{cl}\left(b_{j},b_{k}\right)\cap\mathrm{cl}\left(s_{j,m},s_{m,k}^{\prime}\right)$:

\begin{center}
\begin{tikzpicture}[scale=1.5,
	vert/.style = {
		circle,
		minimum size = 2mm,
		fill=Cerulean!60,
		font=\itshape
	},
    midarrow/.style={
    postaction={decorate},
    decoration={
      markings,
      mark=at position 0.6 with {\arrow[scale=1.5]{>}}
        }
    },
    midarrowShort/.style={
    postaction={decorate},
    decoration={
      markings,
      mark=at position 0.7 with {\arrow[scale=1.5]{>}}
        }
    },
    midarrowLong/.style={
    postaction={decorate},
    decoration={
      markings,
      mark=at position 0.57 with {\arrow[scale=1.5]{>}}
        }
    }
    ]
    \node (v1)  at (-3,0)		    [vert,label=below:$b_i$]{};
    \node (v2)  at (0,0)			[vert,label=below:$b_j$]{};

    \node (v3)  at (-1.5,2.6)		[vert,label=left:$b_k$]{};
    \node (v4)  at (0,3)            [vert,label=right:$b_m$]{};

    \draw[midarrow]         (v1) -- (v2);
    \draw[midarrow]         (v3) -- (v1);
    \draw[midarrow]         (v2) -- (v3);
    \draw[midarrow]         (v2) -- (v4);
    \draw[midarrow]         (v4) -- (v1);
    \draw[midarrowShort]    (v4) -- (v3);
    
    \node (m23) at (-0.75,1.3)      [vert,label=north east:$t^{-1}$]{};
    \node (m13) at (-2.25,1.3)      [vert,label=left:$e$]{};
    \node (m12) at (-1.5,0)		    [vert,label=below:$t$]{};
    
    \node (m24) at (0,1.5)          [vert,label=right:$s$]{};
    \node (m14) at (-1.5,1.5)       [vert,label=below:$s'$]{};
    \node (m34) at (-0.75,2.8)      [vert,label=above:$s'$]{};

    \draw[BurntOrange,thick]
        (m23) to [bend left=50] (m12) 
        (m12) to [bend left=50] (m13) 
        (m13) to [bend left=50] (m23);
    \draw[RoyalBlue,thick]
        (m12) to [bend left=42] (m14)
        (m24) to [bend left=42] (m12);
    \draw[ForestGreen,thick]
        (m14) to [bend right=10] (m34)
        (m13) to [bend right=20] (m14);
    \draw[BrickRed,thick]
        (m34) to [bend left=20] (m24)
        (m24) to [bend left=30] (m23)
        (m23) to [bend left=20] (m34);
    \draw[dashed] (m13) circle[radius=0.175];
\end{tikzpicture}
\end{center}

Further, (by the same lemma,) since $f\left(e_{k,i},s_{m,k}^{\prime},s_{m,i}^{\prime}\right)=2$,
we have $f\left(t_{i,j},t_{j,k}^{-1},e_{k,i}\right)=2$ as required.
\end{proof}
\begin{thm}
[A product-closed extension]\label[theorem]{thm:prod_extension}Let
$\left(E,f,S\right)$ be a finite almost entropic PDG of rank $4$.
There exists an extension $\left(\tilde{E},\tilde{f},\tilde{S}\right)$
of $\left(E,f,S\right)$, which is again an almost entropic PDG of
rank $4$, such that for all $s,s'\in\tilde{S}$ (not necessarily
distinct) there exists a geometric product $t\in\tilde{S}$.
\end{thm}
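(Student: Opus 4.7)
The plan is to construct $(\tilde E, \tilde f, \tilde S)$ as the direct limit of an ascending chain
\[
(E, f, S) = (E_0, f_0, S_0) \subseteq (E_1, f_1, S_1) \subseteq \cdots
\]
of finite almost entropic rank $4$ PDGs, arranged so that every pair $(s, s') \in S_n \times S_n$ already has a geometric product inside $S_{n+1}$. To pass from stage $n$ to stage $n+1$, I would enumerate the finitely many ordered pairs in $S_n \times S_n$ and invoke the preceding lemma once for each pair in turn, using the output of one application as the input to the next. Since each application adjoins only finitely many elements to the ground set (twelve elements of $\tilde E$ and two of $\tilde S$, per pair, in the worst case), finitely many applications suffice to produce a finite almost entropic rank $4$ PDG $(E_{n+1}, f_{n+1}, S_{n+1})$ extending $(E_n, f_n, S_n)$ and containing a geometric product for every pair in $S_n \times S_n$.

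Next, I would set $\tilde E = \bigcup_{n} E_n$ and $\tilde S = \bigcup_{n} S_n$, and define $\tilde f(F) = f_n(F)$ for any finite $F \subseteq E_n$. This is consistent because each $f_{n+1}$ extends $f_n$. I would then extend $\tilde f$ to arbitrary subsets by $\tilde f(T) = \sup\{\tilde f(F) : F \subseteq T \text{ finite}\}$, which makes $\tilde f$ a polymatroid of finite type. Each PDG axiom refers only to finitely many elements, so every axiom for $(\tilde E, \tilde f, \tilde S)$ is inherited from its verification in some $(E_n, f_n, S_n)$; the distinguished basis $B \subseteq E_0$ serves for all stages, so the rank remains $4$.

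To conclude that $(\tilde E, \tilde f)$ is almost entropic, I would apply \cref{lem:aent_chain} to the chain $E_0 \subseteq E_1 \subseteq \cdots$: the restriction $\tilde f \restriction_{\mathcal{P}(E_n)}$ is exactly $f_n$, which is almost entropic by construction. Finally, given any $s, s' \in \tilde S$, both lie in some common $S_n$, so a geometric product $t \in S_{n+1} \subseteq \tilde S$ exists; the relator $(s, s', t^{-1})$ witnessed in $(E_{n+1}, f_{n+1}, S_{n+1})$ persists in $(\tilde E, \tilde f, \tilde S)$ because $f_{n+1}$ is a restriction of $\tilde f$.

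The substantive work is all hidden in the single-pair extension supplied by the preceding lemma (which itself rests on \cref{thm:aent_Desargues}); the present argument is essentially a countable iteration of that construction. The only subtle point is passing almost entropy through the direct limit, and this is handled cleanly by \cref{lem:aent_chain}.
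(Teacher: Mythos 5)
Your proposal is correct and follows essentially the same route as the paper: iterate the single-pair extension lemma over all ordered pairs at each stage, take the union of the resulting chain, define $\tilde{f}$ on infinite sets via suprema over finite subsets, and observe that PDG axioms and relators persist in the limit. The only (harmless) difference is that you explicitly invoke \cref{lem:aent_chain} for almost entropy of the limit, a point the paper's proof leaves implicit.
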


This also holds in the countable case. The details are routine but
slightly longer, and the claim is not used in this paper, so it is
omitted.
\begin{proof}
We construct a chain of extensions using the previous lemma: denote
$\left(E_{0},f_{0},S_{0}\right)=\left(E,f,S\right)$. Given $\left(E_{n},f_{n},S_{n}\right)$,
enumerate all ordered pairs $\left(s,s'\right)$ of elements in $S$
(including pairs with $s=s'$) as $\left(s_{1},s_{1}^{\prime}\right),\ldots,\left(s_{k},s_{k}^{\prime}\right)$.
Then apply the previous lemma to construct a chain of extensions 
\[
\left(E_{n,0},f_{n,0},S_{n,0}\right)=\left(E_{n},f_{n},S_{n}\right),\quad\ldots,\quad\left(E_{n,k},f_{n,k},S_{n,k}\right)\eqqcolon\left(E_{n+1},f_{n+1},S_{n+1}\right)
\]
of $\left(E_{n},f_{n},S_{n}\right)$, where $\left(E_{n,i+1},f_{n,i+1},S_{n,i+1}\right)$
is an extension of $\left(E_{n,i},f_{n,i},S_{n,i}\right)$ in which
a geometric product of $\left(s_{i},s_{i}^{\prime}\right)$ exists.
Having done this for all $n$, define $\tilde{E}=\bigcup_{n}E_{n}$,
and define
\[
\tilde{f}:P\left(\tilde{E}\right)\rightarrow\mathbb{R}_{\ge0}
\]
by $\tilde{f}\left(A\right)=\lim_{n}f_{n}\left(A\cap E_{n}\right)$
(the limit exists because it is a monotone sequence, and always bounded
above by $4$, which is the value of $f_{n}\left(E_{n}\right)$ for
all $n$). Similarly define $\tilde{S}=\bigcup_{n}S_{n}$, and note
that the involution on $S_{n}$ agrees with the involution on the
subset $S_{k}$ for all $n\ge k$, so also $\tilde{S}$ is naturally
equipped with an involution. The pair $\left(\tilde{E},\tilde{f}\right)$
is indeed a polymatroid; it is of finite type because the limit $\lim_{n}f_{n}\left(A\cap E_{n}\right)$
equals $\sup_{n}\tilde{f}\left(A\cap E_{n}\right)$, and any finite
$F\subset A$ is contained in some $E_{n}$. Since we have $\tilde{f}\left(F\right)\le\tilde{f}\left(A\cap E_{n}\right)$
by monotonicity, we obtain
\[
\tilde{f}\left(A\right)=\sup_{F\subseteq A\text{ finite}}\tilde{f}\left(F\right).
\]
Also the PDG properties (\ref{PDG:S}-\ref{PDG:coherence}) for $\left(\tilde{E},\tilde{f},\tilde{S}\right)$
are immediate from the definition, since they hold for each $n$.
If $\left(s,s',t^{-1}\right)$ is a relator of some $\left(E_{n},f_{n},S_{n}\right)$
then it is also a relator of $\left(\tilde{E},\tilde{f},\tilde{S}\right)$.
Since any $s,s'\in\tilde{S}$ are elements of some $S_{n}$, and have
a geometric product in $\left(E_{n+1},f_{n+1},S_{n+1}\right)$, they
have a geometric product in $\left(\tilde{E},\tilde{f},\tilde{S}\right)$
as required.
\end{proof}

\subsection{Recovering a group}

\label[section]{sec:group_from_geometric_product}

The goal of this section is to recover a group from an almost entropic
PDG of rank $4$.
\begin{defn}
If $\left(E,f,S\right)$ is a rank-4 almost entropic PDG and $t,t'\in S$,
write $t\sim t'$ if $t'_{i,j}\in\mathrm{cl}\left(t_{i,j}\right)$
for some $1\le i,j\le4$ (and hence all $i,j$, by \cref{thm:PDG_prod_uniqueness}).
This is the \emph{parallelism relation}, and it is straightforward
that it is an equivalence relation. Denote the equivalence class (or
\emph{parallelism class}) of $t\in S$ by $\left[t\right]$.

If $t\in S$ is a geometric product of $s,s'\in S$, denote $t\in s\cdot s'$
(so that $s\cdot s'$ is the set of geometric products of $s,s'$).
By \cref{thm:PDG_prod_uniqueness}, the geometric product descends
to a well-defined function on the parallelism classes, and we denote
$\left[t\right]=\left[s\right]\cdot\left[s'\right]$.
\end{defn}

\begin{thm}
\label[theorem]{thm:group_from_PDG}Let $\left(E,f,S\right)$ be an
almost entropic PDG of rank $4$ which is closed under geometric products.
Then $\left(S/\mathord{}{\sim},\cdot\right)$ is a group, and the
inverse of each $\left[s\right]$ (where $s\in S$) is $\left[s^{-1}\right]$.
\end{thm}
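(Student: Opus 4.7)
The plan is to verify closure, identity, inverses, and associativity for $(S/\!\sim, \cdot)$. Closure holds because $(E, f, S)$ is assumed closed under geometric products, and the operation descends to parallelism classes by \cref{thm:PDG_prod_uniqueness}.

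For identity and inverses, the coherence condition in the definition of a PDG states that $(s, s^{-1}, e)$ is a relator for every $s \in S$. Applying this condition also with $s^{-1}$ in place of $s$ (permitted because $S$ is closed under the involution) gives the relator $(s^{-1}, s, e)$. Cyclic shifts of relators (noted in the remarks following the PDG definition) then produce $(e, s, s^{-1})$ from $(s, s^{-1}, e)$, and $(s, e, s^{-1})$ from $(s^{-1}, s, e)$. A relator $(\alpha, \beta, \gamma^{-1})$ encodes the product identity $[\alpha] \cdot [\beta] = [\gamma]$; reading off from the four relators above, one obtains $[s] \cdot [s^{-1}] = [s^{-1}] \cdot [s] = [e]$ and $[e] \cdot [s] = [s] \cdot [e] = [s]$, as required for $[e]$ to be the two-sided identity and $[s^{-1}]$ the two-sided inverse of $[s]$.

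For associativity, given $s_1, s_2, s_3 \in S$, use closure under geometric products to pick $u \in s_1 \cdot s_2$, $v \in s_2 \cdot s_3$, $w \in u \cdot s_3$, and $x \in s_1 \cdot v$; the goal is to show $w \sim x$. Cyclic shifts and inversions of the relators $(s_1, s_2, u^{-1})$ and $(s_2, s_3, v^{-1})$ produce the relators $(u^{-1}, s_1, s_2)$ and $(s_2^{-1}, v, s_3^{-1})$. These are exactly the hypotheses of \cref{lem:PDG_Desargues} with its variables $(s, t, u, v, w)$ specialized to $(u, s_1, s_2, v, s_3^{-1})$, respectively. Invoke the ``Further, \ldots'' clause of that lemma with test point $w^{-1}$: the required input $f(s_{3;k,m}^{-1}, u_{i,m}, w^{-1}_{k,i}) = 2$ is precisely the relator $(u, s_3, w^{-1})$ read at indices $(i, m, k)$, using the identification $s_{3;m,k} = s_{3;k,m}^{-1}$. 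The conclusion then yields $f(s_{1;i,j}, v_{j,k}, w^{-1}_{k,i}) = 2$ for the chosen distinct indices $(i, j, k)$. \Cref{lem:finding_relators} upgrades this single rank-$2$ condition to the statement that $(s_1, v, w^{-1})$ is a relator, so $w$ is itself a geometric product of $s_1$ and $v$. Since $x$ is a geometric product of $s_1$ and $v$ as well, \cref{thm:PDG_prod_uniqueness} gives $w \sim x$, which is $([s_1]\cdot[s_2])\cdot[s_3] = [s_1]\cdot([s_2]\cdot[s_3])$.

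The main obstacle is organizational: one has to arrange the cyclic shifts, inversions, and index conventions so that the hypotheses of \cref{lem:PDG_Desargues} line up with the relators at hand. Once the substitution is chosen correctly, no further polymatroidal or geometric arguments are required beyond invoking the cited lemmas; the argument is a direct polymatroid-theoretic transcription of the classical projective proof that the geometric product on a Desarguesian line is associative.
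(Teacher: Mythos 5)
Your proposal is correct and follows essentially the same route as the paper's proof: identity and inverses from coherence plus cyclic shifts, and associativity by feeding the two relators $(s_1,s_2,u^{-1})$ and $(s_2,s_3,v^{-1})$ into \cref{lem:PDG_Desargues}, invoking the ``Further'' clause with the test point $(u\cdot s_3)^{-1}$, and closing with \cref{lem:finding_relators} and \cref{thm:PDG_prod_uniqueness}. The index bookkeeping and the specialization of the Desargues lemma's variables match the paper's argument up to renaming.
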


\begin{proof}
The geometric product $\cdot$ is a function $\left(S/\mathord{}{\sim}\right)\times\left(S/\mathord{}{\sim}\right)\rightarrow S/\mathord{}{\sim}$:
at least one product is defined for any pair $\left(\left[s\right],\left[s'\right]\right)$
by the assumption that $\left(E,f,S\right)$ is closed under geometric
products, and it is single-valued by \cref{thm:PDG_prod_uniqueness}
(any two products of $s,s'\in S$ are parallel). Inverses exist by
(\ref{PDG:coherence}): given $s\in S$, since $\left(s,s^{-1},e\right)$
is a relator we have that 
\[
\left[e^{-1}\right]=\left[e\right]=\left[s\right]\cdot\left[s^{-1}\right].
\]
Similarly (since $\left(\right)^{-1}$ is an involution) also 
\[
\left[e^{-1}\right]=\left[e\right]=\left[s^{-1}\right]\cdot\left[\left(s^{-1}\right)^{-1}\right]=\left[s^{-1}\right]\cdot\left[s\right].
\]
Also, again by (\ref{PDG:coherence}), $\left[e\right]$ is a neutral
element for $\left(\cdot\right)$: since $\left(s,s^{-1},e\right)$
is a relator, also $\left(e,s,s^{-1}\right)$ is a relator (by taking
a cyclic shift); this yields $\left[e\right]\cdot\left[s\right]=\left[s\right]$.
Since $\left(s^{-1},s,e\right)$ is a relator (this time by (\ref{PDG:coherence})
applied to $s^{-1}$), the cyclic shift $\left(s,e,s^{-1}\right)$
is also a relator, which yields $\left[s\right]\cdot\left[e\right]=\left[s\right]$.

We now prove associativity. Let $x,y,z\in S$. Let $s\in S$ satisfy
$\left[s\right]=\left[x\right]\cdot\left[y\right]$ and let $v\in S$
satisfy $\left[v\right]=\left[y\right]\cdot\left[z\right]$. Then
$\left(x,y,s^{-1}\right)$ and $\left(y,z,v^{-1}\right)$ are relators,
so that (by taking cyclic shifts) $\left(s^{-1},x,y\right)$ and $\left(y^{-1},v,z^{-1}\right)$
are also relators of $\left(E,f,S\right)$. By \cref{lem:PDG_Desargues},
$\left(z_{3,4}^{-1},s_{1,4},y_{2,4}\right)$ and $\left(b_{3},b_{1},b_{2}\right)$
are in perspective from $b_{4}$, and they form a Desargues configuration
with the intersection points $x_{1,2}\in\mathrm{cl}\left(s_{1,4},y_{2,4}\right)\cap\mathrm{cl}\left(b_{1},b_{2}\right)$
and $v_{2,3}\in\mathrm{cl}\left(y_{2,4},z_{3,4}^{-1}\right)\cap\mathrm{cl}\left(b_{2},b_{3}\right)$:

\begin{center}
\begin{tikzpicture}[scale=1.5,
	vert/.style = {
		circle,
		minimum size = 2mm,
		fill=Cerulean!60,
		font=\itshape
	},
    midarrow/.style={
    postaction={decorate},
    decoration={
      markings,
      mark=at position 0.6 with {\arrow[scale=1.5]{>}}
        }
    },
    midarrowShort/.style={
    postaction={decorate},
    decoration={
      markings,
      mark=at position 0.7 with {\arrow[scale=1.5]{>}}
        }
    },
    midarrowLong/.style={
    postaction={decorate},
    decoration={
      markings,
      mark=at position 0.57 with {\arrow[scale=1.5]{>}}
        }
    }
    ]
    \node (v1)  at (-3,0)		    [vert,label=below:$b_1$]{};
    \node (v2)  at (0,0)			[vert,label=below:$b_2$]{};

    \node (v3)  at (-1.5,2.6)		[vert,label=left:$b_3$]{};
    \node (v4)  at (0,3)            [vert,label=right:$b_4$]{};

    \draw[midarrow]         (v1) -- (v2);
    \draw[midarrow]         (v3) -- (v1);
    \draw[midarrow]         (v2) -- (v3);
    \draw[midarrow]         (v2) -- (v4);
    \draw[midarrow]         (v1) -- (v4);
    \draw[midarrowShort]    (v3) -- (v4);

    \node (m23) at (-0.75,1.3)      [vert,label=left:$v$]{};
    \node (m13) at (-2.25,1.3)      [vert,label=left:$p$]{};
    \node (m12) at (-1.5,0)		    [vert,label=below:$x$]{};
    
    \node (m24) at (0,1.5)          [vert,label=right:$y$]{};
    \node (m14) at (-1.5,1.5)       [vert,label=below:$s$]{};
    \node (m34) at (-0.75,2.8)      [vert,label=above:$z^{-1}$]{};

    \draw[BurntOrange,thick]
        (m23) to [bend left=50] (m12) 
        (m12) to [bend left=50] (m13) 
        (m13) to [bend left=50] (m23);
    \draw[RoyalBlue,thick]
        (m12) to [bend left=42] (m14)
        (m24) to [bend left=42] (m12);
    \draw[ForestGreen,thick]
        (m14) to [bend right=10] (m34)
        (m13) to [bend right=20] (m14);
    \draw[BrickRed,thick]
        (m34) to [bend left=20] (m24)
        (m24) to [bend left=30] (m23)
        (m23) to [bend left=20] (m34);
    \draw[dashed] (m13) circle[radius=0.175];
\end{tikzpicture}
\end{center}

The last part of \cref{lem:PDG_Desargues} states that if $p_{3,1}\in\mathrm{cl}\left(z_{3,4}^{-1},s_{1,4}\right)=\mathrm{cl}\left(s_{1,4},z_{4,3}\right)$,
we have $f\left(x_{1,2},v_{2,3},p_{3,1}\right)=2$. Let $p\in S$
satisfy 
\[
\left[p^{-1}\right]=\left[s\right]\cdot\left[z\right]=\left(\left[x\right]\cdot\left[y\right]\right)\cdot\left[z\right],
\]
so that $\left(s,z,\left(p^{-1}\right)^{-1}\right)=\left(s,z,p\right)$
is a relator. This is equivalent to $f\left(s_{1,4},z_{4,3},p_{3,1}\right)=2$,
which implies $p_{3,1}\in\mathrm{cl\left(z_{3,4}^{-1},s_{1,4}\right)}$,
and we find $f\left(x_{1,2},v_{2,3},p_{3,1}\right)=2$, i.e. $\left(x,v,p\right)$
is a relator (by \cref{lem:finding_relators}). But this means
\[
\left[p^{-1}\right]=\left[x\right]\cdot\left[v\right]=\left[x\right]\cdot\left(\left[y\right]\cdot\left[z\right]\right),
\]
and we have obtained $\left(\left[x\right]\cdot\left[y\right]\right)\cdot\left[z\right]=\left[x\right]\cdot\left(\left[y\right]\cdot\left[z\right]\right)$
as required (both expressions equal $\left[p^{-1}\right]$).
\end{proof}

\section{The lifting theorem}

\label[section]{sec:PDG_lifting}

Basic almost entropic geometry can be used to ``lift'' almost entropic
PDGs of rank $3$ to almost entropic PDGs of rank $4$. In principle
one could simply work with rank $4$ PDGs from the outset, but in
the context of \cite{KY22} it was simpler to work with rank $3$
geometries. We prove the lifting theorem here in order to use the
constructions of that paper.
\begin{thm}
\label[theorem]{thm:lifting} Let $\left(E,f,S\right)$ be an almost
entropic PDG of rank $3$. Then there exists an almost entropic PDG
$\left(\tilde{E},\tilde{f},S\right)$ of rank $4$ (on the same generating
set) that extends $\left(E,f\right)$.
\end{thm}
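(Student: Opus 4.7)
Plan: construct the rank $4$ extension in two phases. First, a single application of the copy lemma introduces the new basis element $b_4$ together with two of the three new lines $\mathrm{cl}(b_1, b_4)$ and $\mathrm{cl}(b_2, b_4)$, populated with appropriate $s$-points. Second, Desargues' theorem---together with further uses of the copy lemma when needed---populates the third new line $\mathrm{cl}(b_3, b_4)$ and verifies the remaining coherence conditions.

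For the first phase, apply \cref{lem:copy} to $(E, f)$ with base $B = \{b_1, b_2\}$ and copy set $C = \{b_3\}\cup\{s_{1,3}, s_{2,3}: s\in S\}$, renaming the resulting copies $b_3' \eqqcolon b_4$, $s_{1,3}' \eqqcolon s_{1,4}$, and $s_{2,3}' \eqqcolon s_{2,4}$ for each $s \in S$. By conclusion (2) of the copy lemma, the sub-polymatroid of the extension $(E_1, f_1)$ on $\{b_1, b_2, b_4\} \cup \{s_{1,4}, s_{2,4}: s \in S\}$ is isomorphic to the sub-polymatroid of $(E, f)$ on $\{b_1, b_2, b_3\} \cup \{s_{1,3}, s_{2,3}: s \in S\}$; in particular, each $s_{1,4} \in \mathrm{cl}(b_1, b_4)$, each $s_{2,4} \in \mathrm{cl}(b_2, b_4)$, and every coherence relation of the original PDG whose indices lie in $\{1, 2, 4\}$ also holds in $(E_1, f_1)$. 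Conclusion (3) gives $f_1(b_1, b_2, b_3, b_4) = 4$, so $(E_1, f_1)$ has rank $4$.

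For the second phase, for each $s \in S$ we use Desargues' theorem (\cref{thm:aent_Desargues}) to construct $s_{3,4}$ in a further almost entropic extension. The relator $(s, s^{-1}, e)$ applied to the index triple $(3, 4, 1)$ forces $s_{3,4}$ to lie on $\mathrm{cl}(s_{1,4}, e_{1,3})$, so we aim for the Desargues intersection point $x_3 \in \mathrm{cl}(b_3, b_4) \cap \mathrm{cl}(s_{1,4}, e_{1,3})$. This uses triangles of the form $(b_3, b_4, a_3)$ and $(s_{1,4}, e_{1,3}, \beta_3)$ with a center $O$ on $\mathrm{cl}(b_3, s_{1,4}) \cap \mathrm{cl}(b_4, e_{1,3})$, together with suitable $a_3, \beta_3$ chosen so that $f(O, b_3, b_4, a_3) = 4$. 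Once $s_{3,4}$ has been introduced for each $s$, the PDG axioms are verified: conditions (\ref{PDG:S})--(\ref{PDG:nondegeneracy}) are routine; coherence for index triples in $\{1,2,3\}$ or $\{1,2,4\}$ is inherited from the first phase; and the remaining coherence conditions are exactly the content of the Desargues constructions.

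The main obstacle is that the input data to the Desargues applications in the second phase---the center $O$ and the intersection points $x_1, x_2$---do not automatically exist in $(E_1, f_1)$, and must themselves be produced by iterated applications of \cref{thm:aent_Desargues} and \cref{lem:copy}, yielding a finite but intricate chain of almost entropic extensions. A related subtlety is that the constructed $s_{3,4}$ must be consistent across all the coherence conditions it has to satisfy (for the different index triples involving both $3$ and $4$); this is handled by the ``Further\dots'' uniqueness clause of \cref{thm:aent_Desargues}, which shows that independent Desargues applications yield parallel candidates, so $s_{3,4}$ is well-defined up to parallelism and all the required coherence conditions hold simultaneously.
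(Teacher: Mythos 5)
Your overall architecture (copy lemma to create $b_4$ together with the lines $\mathrm{cl}(b_1,b_4)$ and $\mathrm{cl}(b_2,b_4)$, then Desargues to populate $\mathrm{cl}(b_3,b_4)$ and to check coherence) is the same as the paper's, but there are two genuine gaps.

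First, your copy lemma base is too small. You copy over $B=\{b_1,b_2\}$, but conclusion (2) of \cref{lem:copy} only transfers ranks of subsets of $B\cup C'$, and the coherence relations for index triples in $\{1,2,4\}$ involve the elements $s_{1,2}$, which lie in neither $B$ nor $C'$. Nothing in the copy lemma forces $f_1\left(s_{1,2},s_{2,4}^{-1},e_{4,1}\right)=2$: conclusion (3) only gives that this triple has rank $1$ over $\{b_1,b_2\}$, i.e.\ rank $3$ together with $b_1,b_2$, which is entirely consistent with the triple itself having rank $3$ rather than $2$. (In the linear analogue, amalgamating freely over the span of $b_1,b_2$ can move the points of the line $\mathrm{cl}(b_1,b_2)$ in the copied half, so the copied lines through $s_{2,4}^{-1}$ and $e_{4,1}$ need not pass through the original $s_{1,2}$.) The paper takes the base to be $A=\{b_1,b_2\}\cup\left\{ s_{1,2}\mid s\in S\right\}$ precisely so that these triples are contained in $A\cup C'$ and coherence for $\{1,2,4\}$ transfers through the isomorphism of conclusion (2). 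Since extending a polymatroid never changes the rank of an existing set, this cannot be repaired later.

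Second, your construction of $s_{3,4}$ does not go through as described. You propose triangles $(b_3,b_4,a_3)$ and $(s_{1,4},e_{1,3},\beta_3)$ in perspective from a center $O\in\mathrm{cl}(b_3,s_{1,4})\cap\mathrm{cl}(b_4,e_{1,3})$. Those two lines are merely coplanar (both lie in $\mathrm{cl}(b_1,b_3,b_4)$), and the paper emphasizes that two coplanar lines in an almost entropic polymatroid need not intersect; the only available intersection tool is the three-line intersection theorem, which needs a rank-$4$ input. So the auxiliary point $O$ (and the further intersection points your triangles would require) cannot be produced by ``iterated applications'' of \cref{thm:aent_Desargues} and \cref{lem:copy} --- this is the crux of the construction, not a routine detail. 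The paper sidesteps it by choosing the configuration so that everything except $s_{3,4}$ already exists after the copy step: triangles $(b_4,b_3,b_1)$ and $(e_{4,2},s_{3,2},s_{1,2})$ in perspective from the basis element $b_2$, with pre-existing intersection points $e_{3,1}$ and $s_{1,4}$; the new point delivered by \cref{thm:aent_Desargues} then lies on $\mathrm{cl}(b_3,b_4)\cap\mathrm{cl}(s_{3,2},e_{4,2})$ and is automatically collinear with $e_{3,1},s_{1,4}$. Relatedly, the remaining coherence conditions for triples containing both $3$ and $4$ are not ``exactly the content of the Desargues constructions'': only two of them come for free, and four more each require a separate application of \cref{lem:PDG_Desargues}, which your write-up does not supply.
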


\begin{proof}
We do this in several steps. Let $\left(E',f'\right)$ be given by
applying the copy lemma to $\left(E,f\right)$ where the base is $A\coloneqq\left\{ b_{1},b_{2}\right\} \cup\left\{ s_{1,2}\mid s\in S\right\} $
and its complement $C=E\setminus A$ is the set to be copied. Identify
$E$ with its image in $E'$, and for each $x\in C$ denote its copy
in $E'$ by $x'$. We set $b_{4}\coloneqq b_{3}^{\prime}$, $s_{1,4}\coloneqq s_{1,3}^{\prime}$
for all $s\in S$, and similarly $s_{2,4}\coloneqq s_{2,3}^{\prime}$
for all $s\in S$. (These are all the elements of $E'$: for the time
being we have no elements named ``$s_{3,4}$''.) The involution
extends naturally to the new elements, so we have $s_{4,1}\coloneqq s_{1,4}^{-1}=\left(s_{1,3}^{-1}\right)^{\prime}=s_{3,1}^{\prime}$
for all $s\in S$ and similarly $s_{4,2}\coloneqq s_{3,2}^{\prime}$.

Before extending the polymatroid further we verify the conditions
on $f'$ for those elements that are present: we need to check conditions
(\ref{PDG:B}) (independence of $B$), (\ref{PDG:frame}) (every element
is in the closure of some $b_{i},b_{j}$), (\ref{PDG:nondegeneracy})
(nondegeneracy conditions), (\ref{PDG:coherence}) (coherent relators)
in the definition of PDGs. Condition (\ref{PDG:B}) holds: independence
of $C,C'$ over $A$ implies that 
\[
f\left(b_{1},b_{2},b_{3},b_{4}\right)-f\left(b_{1},b_{2}\right)=\left[f\left(b_{1},b_{2},b_{3}\right)-f\left(b_{1},b_{2}\right)\right]+\left[f\left(b_{1},b_{2},b_{4}\right)-f\left(b_{1},b_{2}\right)\right]
\]
or $f\left(b_{1},b_{2},b_{3},b_{4}\right)-2=2$. Also $f\left(b_{4}\right)=f\left(b_{3}\right)=1$.
So $\left\{ b_{1},\ldots,b_{4}\right\} $ are independent.

Condition (\ref{PDG:frame}) holds for $\{i,j\}=\{1,4\}$ and $\{i,j\}=\{2,4\}$
by the copy lemma: $f'\restriction_{A\cup C}\simeq f'\restriction_{A\cup C'}$
via the ``obvious'' bijection $A\cup C\rightarrow A\cup C'$. Hence
we have, for $\{i,j\}=\{1,4\}$,
\[
f'\left(\left\{ b_{1},b_{4}\right\} \cup\left\{ s_{1,4}\mid s\in S\right\} \right)=f'\left(\left\{ b_{1},b_{3}\right\} \cup\left\{ s_{1,3}\mid s\in S\right\} \right)
\]
\[
=f\left(\left\{ b_{1},b_{3}\right\} \cup\left\{ s_{1,3}\mid s\in S\right\} \right)=2
\]
(and the case $\{i,j\}=\{2,4\}$ is similar). Condition (\ref{PDG:coherence})
holds for $\{i,j,k\}=\{1,2,4\}$ for precisely the same reason. Also
(\ref{PDG:nondegeneracy}) holds for $\{i,j\}=\{1,4\}$ and $\{i,j\}=\{2,4\}$.
It is clear that all of these conditions hold for indices contained
in $\{1,2,3\}$, since $\left(E,f\right)$ is a PDG.

We now extend the polymatroid further to add elements $\left\{ s_{3,4}\mid s\in S\right\} $.
For each $s\in S$ we use Desargues' theorem: denote $\left(i,j,k\right)=\left(3,1,4\right)$
and $m=2$. Then 
\[
f'\left(s_{m,i}^{-1},e_{i,j},s_{j,m}\right)=2=f'\left(s_{m,j}^{-1},s_{j,k},e_{k,m}\right)
\]
(and all these points are in $E'$, since the index pair $\{3,4\}=\{i,k\}$
doesn't appear). \Cref{lem:PDG_Desargues} shows that $\left(b_{k},b_{i},b_{j}\right)$
and $\left(e_{k,m},s_{i,m},s_{j,m}\right)$ are in perspective from
$b_{m}$, and they form a Desargues configuration with the intersection
points $e_{i,j}\in\mathrm{cl}\left(s_{i,m},s_{j,m}\right)\cap\mathrm{cl}\left(b_{i},b_{j}\right)$
and $s_{j,k}\in\mathrm{cl}\left(s_{j,m},e_{k,m}\right)\cap\mathrm{cl}\left(b_{j},b_{k}\right)$:

\begin{center}
\begin{tikzpicture}[scale=1.5,
	vert/.style = {
		circle,
		minimum size = 2mm,
		fill=Cerulean!60,
		font=\itshape
	},
    midarrow/.style={
    postaction={decorate},
    decoration={
      markings,
      mark=at position 0.6 with {\arrow[scale=1.5]{>}}
        }
    },
    midarrowShort/.style={
    postaction={decorate},
    decoration={
      markings,
      mark=at position 0.7 with {\arrow[scale=1.5]{>}}
        }
    },
    midarrowLong/.style={
    postaction={decorate},
    decoration={
      markings,
      mark=at position 0.57 with {\arrow[scale=1.5]{>}}
        }
    }
    ]
    \node (v1)  at (-3,0)		    [vert,label=below:$b_3$]{};
    \node (v2)  at (0,0)			[vert,label=below:$b_1$]{};

    \node (v3)  at (-1.5,2.6)		[vert,label=left:$b_4$]{};
    \node (v4)  at (0,3)            [vert,label=right:$b_2$]{};

    \draw[midarrow]         (v1) -- (v2);
    \draw[midarrow]         (v1) -- (v3);
    \draw[midarrow]         (v2) -- (v3);
    \draw[midarrow]         (v2) -- (v4);
    \draw[midarrow]         (v1) -- (v4);
    \draw[midarrowShort]    (v3) -- (v4);
    
    \node (m23) at (-0.75,1.3)      [vert,label=left:$s$]{};
    \node (m13) at (-2.25,1.3)      [label=left:$s$]{};
    \node (m12) at (-1.5,0)		    [vert,label=below:$e$]{};
    
    \node (m24) at (0,1.5)          [vert,label=right:$s$]{};
    \node (m14) at (-1.5,1.5)       [vert,label=below:$s$]{};
    \node (m34) at (-0.75,2.8)      [vert,label=above:$e$]{};
	    
    \draw[BurntOrange,thick]
        (m23) to [bend left=50] (m12) 
        (m12) to [bend left=50] (m13) 
        (m13) to [bend left=50] (m23);
    \draw[RoyalBlue,thick]
        (m12) to [bend left=42] (m14)
        (m24) to [bend left=42] (m12);
    \draw[ForestGreen,thick]
        (m14) to [bend right=10] (m34)
        (m13) to [bend right=20] (m14);
    \draw[BrickRed,thick]
        (m34) to [bend left=20] (m24)
        (m24) to [bend left=30] (m23)
        (m23) to [bend left=20] (m34);
    \draw[dashed] (m13) circle[radius=0.175];
\end{tikzpicture}
\end{center}

By applying \cref{thm:aent_Desargues} to each $s\in S$ and taking
a sequence of extensions, we obtain an extension $\left(\tilde{E},\tilde{f}\right)$
of $\left(E',f'\right)$ such that for each $s\in S$ there exists
$s_{3,4}\in E$ satisfying:
\[
\begin{cases}
\tilde{f}\left(s_{3,4}\right)=1,\\
\tilde{f}\left(s_{3,2},e_{4,2},s_{3,4}\right)=2=\tilde{f}\left(b_{3},b_{4},s_{3,4}\right),\\
\tilde{f}\left(s_{3,2},s_{3,4}\right)=2=\tilde{f}\left(b_{3},s_{3,4}\right),\\
\tilde{f}\left(e_{3,1},s_{1,4},s_{3,4}\right)=2.
\end{cases}
\]
The involution on $S$ extends to the new elements: we set $s_{4,3}=s_{3,4}^{-1}$
for each $s\in S$. This still satisfies (\ref{PDG:S},\ref{PDG:B}),
but now also satisfies (\ref{PDG:E}) and (\ref{PDG:frame}) for all
$s_{i,j}$ ($1\le i,j\le4$). Also (\ref{PDG:nondegeneracy}) holds:
by our application of \cref{thm:aent_Desargues} we have $\tilde{f}\left(s_{3,4}\right)=1$
and $\tilde{f}\left(b_{3},s_{3,4}\right)=2$, and so we need to establish
$\tilde{f}\left(b_{4},s_{3,4}\right)$. We have $\tilde{f}\left(b_{4},s_{3,4}\right)\le\tilde{f}\left(b_{3},b_{4},s_{3,4}\right)=2$,
and on the other hand by diminishing returns
\[
\tilde{f}\left(b_{4},s_{3,4}\right)-\underbrace{\tilde{f}\left(s_{3,4}\right)}_{1}\ge\tilde{f}\left(b_{4},s_{3,2},e_{4,2},s_{3,4}\right)-\underbrace{\tilde{f}\left(s_{3,2},e_{4,2},s_{3,4}\right)}_{2}.
\]
Observe that $b_{2}\in\mathrm{cl}\left(b_{4},e_{4,2}\right)$ and
$b_{3}\in\mathrm{cl}\left(b_{2},s_{3,2}\right)$, so $b_{4},b_{2},b_{3}\in\mathrm{cl}\left(b_{4},s_{3,2},e_{4,2},s_{3,4}\right)$,
and hence $\tilde{f}\left(b_{4},s_{3,2},e_{4,2},s_{3,4}\right)\ge3$.
We have obtained
\[
\tilde{f}\left(b_{4},s_{3,4}\right)-1\ge1,
\]
so $2\ge\tilde{f}\left(b_{4},s_{3,4}\right)\ge2$ as required.

All that is left is (\ref{PDG:coherence}) for the index triples involving
both of $\left\{ 3,4\right\} $. In some cases this follows directly
from our application of \cref{thm:aent_Desargues}: we have $\tilde{f}\left(s_{3,2},e_{4,2},s_{3,4}\right)=2=\tilde{f}\left(e_{3,1},s_{1,4},s_{3,4}\right)$.
Rewriting the expression on the right as $\tilde{f}\left(s_{3,4},s_{4,1}^{-1},e_{1,3}\right)$
and the expression on the right as $\tilde{f}\left(s_{2,3}^{-1},s_{3,4},e_{4,2}\right)$,
we obtain (\ref{PDG:coherence}) for $s$ with $\left(i,j,k\right)=\left(3,4,1\right)$,
and for $s^{-1}$ with $\left(i,j,k\right)=\left(2,3,4\right)$. Observe
that using the involution, (\ref{PDG:coherence}) for $s^{-1}$ with
the index triple $\left(i,j,k\right)$ automatically yields it also
for $s$ with the indices $\left(k,j,i\right)$; also, since we are
quantifying over all $S$, any case of (\ref{PDG:coherence}) that
holds for $s^{-1}$ also holds for $s$. Using this and the cases
of (\ref{PDG:coherence}) we already have, it suffices to prove it
for the four index triples $\left(1,3,4\right)$, $\left(4,1,3\right)$,
$\left(4,2,3\right)$, and $\left(3,4,2\right)$. These four conditions
each follow from an application of \cref{lem:PDG_Desargues}, and
in this instance we simply depict them:

\phantom{}

\hspace{-0.5cm}%
\begin{tabular}{cl}
\begin{tikzpicture}[scale=1.5,
	vert/.style = {
		circle,
		minimum size = 2mm,
		fill=Cerulean!60,
		font=\itshape
	},
    midarrow/.style={
    postaction={decorate},
    decoration={
      markings,
      mark=at position 0.6 with {\arrow[scale=1.5]{>}}
        }
    },
    midarrowShort/.style={
    postaction={decorate},
    decoration={
      markings,
      mark=at position 0.7 with {\arrow[scale=1.5]{>}}
        }
    },
    midarrowLong/.style={
    postaction={decorate},
    decoration={
      markings,
      mark=at position 0.57 with {\arrow[scale=1.5]{>}}
        }
    }
    ]
    \node[text width=8cm] at (-0.6, 3.6) {Proof of (\ref{PDG:coherence}) for $(i,j,k)=(1,3,4)$:};

    \node (v1)  at (-3,0)		    [vert,label=below:$b_4$]{};
    \node (v2)  at (0,0)			[vert,label=below:$b_1$]{};

    \node (v3)  at (-1.5,2.6)		[vert,label=left:$b_3$]{};
    \node (v4)  at (0,3)            [vert,label=right:$b_2$]{};
   
    \draw[midarrow]         (v1) -- (v2);
    \draw[midarrow]         (v3) -- (v1);
    \draw[midarrow]         (v2) -- (v3);
    \draw[midarrow]         (v2) -- (v4);
    \draw[midarrow]         (v1) -- (v4);
    \draw[midarrowShort]    (v3) -- (v4);

    \node (m23) at (-0.75,1.3)      [vert,label=left:$s$]{};
    \node (m13) at (-2.25,1.3)      [vert,label=left:$s^{-1}$]{};
    \node (m12) at (-1.5,0)		    [vert,label=below:$e$]{};
    
    \node (m24) at (0,1.5)          [vert,label=right:$e$]{};
    \node (m14) at (-1.5,1.5)       [vert,label=below:$e$]{};
    \node (m34) at (-0.75,2.8)      [vert,label=above:$s^{-1}$]{};
	    
    \draw[BurntOrange,thick]
        (m23) to [bend left=50] (m12) 
        (m12) to [bend left=50] (m13) 
        (m13) to [bend left=50] (m23);
    \draw[RoyalBlue,thick]
        (m12) to [bend left=42] (m14)
        (m24) to [bend left=42] (m12);
    \draw[ForestGreen,thick]
        (m14) to [bend right=10] (m34)
        (m13) to [bend right=20] (m14);
    \draw[BrickRed,thick]
        (m34) to [bend left=20] (m24)
        (m24) to [bend left=30] (m23)
        (m23) to [bend left=20] (m34);
    \draw[dashed] (m13) circle[radius=0.175];
\end{tikzpicture} & \begin{tikzpicture}[scale=1.5,
	vert/.style = {
		circle,
		minimum size = 2mm,
		fill=Cerulean!60,
		font=\itshape
	},
    midarrow/.style={
    postaction={decorate},
    decoration={
      markings,
      mark=at position 0.6 with {\arrow[scale=1.5]{>}}
        }
    },
    midarrowShort/.style={
    postaction={decorate},
    decoration={
      markings,
      mark=at position 0.7 with {\arrow[scale=1.5]{>}}
        }
    },
    midarrowLong/.style={
    postaction={decorate},
    decoration={
      markings,
      mark=at position 0.57 with {\arrow[scale=1.5]{>}}
        }
    }
    ]
    \node[text width=8cm] at (-0.6, 3.6) {Proof of (\ref{PDG:coherence}) for $(i,j,k)=(4,1,3)$:};

    \node (v1)  at (-3,0)		    [vert,label=below:$b_3$]{};
    \node (v2)  at (0,0)			[vert,label=below:$b_4$]{};

    \node (v3)  at (-1.5,2.6)		[vert,label=left:$b_1$]{};
    \node (v4)  at (0,3)            [vert,label=right:$b_2$]{};

    \draw[midarrow]         (v1) -- (v2);
    \draw[midarrow]         (v3) -- (v1);
    \draw[midarrow]         (v2) -- (v3);
    \draw[midarrow]         (v2) -- (v4);
    \draw[midarrow]         (v1) -- (v4);
    \draw[midarrowShort]    (v3) -- (v4);
    
    \node (m23) at (-0.75,1.3)      [vert,label=left:$s$]{};
    \node (m13) at (-2.25,1.3)      [vert,label=left:$s^{-1}$]{};
    \node (m12) at (-1.5,0)		    [vert,label=below:$e$]{};
    
    \node (m24) at (0,1.5)          [vert,label=right:$e$]{};
    \node (m14) at (-1.5,1.5)       [vert,label=below:$e$]{};
    \node (m34) at (-0.75,2.8)      [vert,label=above:$s^{-1}$]{};
    
    \draw[BurntOrange,thick]
        (m23) to [bend left=50] (m12) 
        (m12) to [bend left=50] (m13) 
        (m13) to [bend left=50] (m23);
    \draw[RoyalBlue,thick]
        (m12) to [bend left=42] (m14)
        (m24) to [bend left=42] (m12);
    \draw[ForestGreen,thick]
        (m14) to [bend right=10] (m34)
        (m13) to [bend right=20] (m14);
    \draw[BrickRed,thick]
        (m34) to [bend left=20] (m24)
        (m24) to [bend left=30] (m23)
        (m23) to [bend left=20] (m34);
    \draw[dashed] (m13) circle[radius=0.175];
\end{tikzpicture}\tabularnewline
\begin{tikzpicture}[scale=1.5,
	vert/.style = {
		circle,
		minimum size = 2mm,
		fill=Cerulean!60,
		font=\itshape
	},
    midarrow/.style={
    postaction={decorate},
    decoration={
      markings,
      mark=at position 0.6 with {\arrow[scale=1.5]{>}}
        }
    },
    midarrowShort/.style={
    postaction={decorate},
    decoration={
      markings,
      mark=at position 0.7 with {\arrow[scale=1.5]{>}}
        }
    },
    midarrowLong/.style={
    postaction={decorate},
    decoration={
      markings,
      mark=at position 0.57 with {\arrow[scale=1.5]{>}}
        }
    }
    ]

    \node[text width=8cm] at (-0.6, 3.6) {Proof of (\ref{PDG:coherence}) for $(i,j,k)=(3,4,2)$:};
    \node (v1)  at (-3,0)		    [vert,label=below:$b_4$]{};
    \node (v2)  at (0,0)			[vert,label=below:$b_2$]{};

    \node (v3)  at (-1.5,2.6)		[vert,label=left:$b_3$]{};
    \node (v4)  at (0,3)            [vert,label=right:$b_1$]{};

    \draw[midarrow]         (v1) -- (v2);
    \draw[midarrow]         (v3) -- (v1);
    \draw[midarrow]         (v2) -- (v3);
    \draw[midarrow]         (v2) -- (v4);
    \draw[midarrow]         (v1) -- (v4);
    \draw[midarrowShort]    (v3) -- (v4);
    
    \node (m23) at (-0.75,1.3)      [vert,label=left:$e$]{};
    \node (m13) at (-2.25,1.3)      [vert,label=left:$s$]{};
    \node (m12) at (-1.5,0)		    [vert,label=below:$s^{-1}$]{};
    
    \node (m24) at (0,1.5)          [vert,label=right:$e$]{};
    \node (m14) at (-1.5,1.5)       [vert,label=below:$s^{-1}$]{};
    \node (m34) at (-0.75,2.8)      [vert,label=above:$e$]{};
    
    \draw[BurntOrange,thick]
        (m23) to [bend left=50] (m12) 
        (m12) to [bend left=50] (m13) 
        (m13) to [bend left=50] (m23);
    \draw[RoyalBlue,thick]
        (m12) to [bend left=42] (m14)
        (m24) to [bend left=42] (m12);
    \draw[ForestGreen,thick]
        (m14) to [bend right=10] (m34)
        (m13) to [bend right=20] (m14);
    \draw[BrickRed,thick]
        (m34) to [bend left=20] (m24)
        (m24) to [bend left=30] (m23)
        (m23) to [bend left=20] (m34);
    \draw[dashed] (m13) circle[radius=0.175];
\end{tikzpicture} & \begin{tikzpicture}[scale=1.5,
	vert/.style = {
		circle,
		minimum size = 2mm,
		fill=Cerulean!60,
		font=\itshape
	},
    midarrow/.style={
    postaction={decorate},
    decoration={
      markings,
      mark=at position 0.6 with {\arrow[scale=1.5]{>}}
        }
    },
    midarrowShort/.style={
    postaction={decorate},
    decoration={
      markings,
      mark=at position 0.7 with {\arrow[scale=1.5]{>}}
        }
    },
    midarrowLong/.style={
    postaction={decorate},
    decoration={
      markings,
      mark=at position 0.57 with {\arrow[scale=1.5]{>}}
        }
    }
    ]

    \node[text width=8cm] at (-0.6, 3.6) {Proof of (\ref{PDG:coherence}) for $(i,j,k)=(4,2,3)$:};
    \node (v1)  at (-3,0)		    [vert,label=below:$b_4$]{};
    \node (v2)  at (0,0)			[vert,label=below:$b_2$]{};

    \node (v3)  at (-1.5,2.6)		[vert,label=left:$b_3$]{};
    \node (v4)  at (0,3)            [vert,label=right:$b_1$]{};

    \draw[midarrow]         (v1) -- (v2);
    \draw[midarrow]         (v3) -- (v1);
    \draw[midarrow]         (v2) -- (v3);
    \draw[midarrow]         (v2) -- (v4);
    \draw[midarrow]         (v1) -- (v4);
    \draw[midarrowShort]    (v3) -- (v4);
    
    \node (m23) at (-0.75,1.3)      [vert,label=left:$s^{-1}$]{};
    \node (m13) at (-2.25,1.3)      [vert,label=left:$e$]{};
    \node (m12) at (-1.5,0)		    [vert,label=below:$s$]{};
    
    \node (m24) at (0,1.5)          [vert,label=right:$s^{-1}$]{};
    \node (m14) at (-1.5,1.5)       [vert,label=below:$e$]{};
    \node (m34) at (-0.75,2.8)      [vert,label=above:$e$]{};
    
    \draw[BurntOrange,thick]
        (m23) to [bend left=50] (m12) 
        (m12) to [bend left=50] (m13) 
        (m13) to [bend left=50] (m23);
    \draw[RoyalBlue,thick]
        (m12) to [bend left=42] (m14)
        (m24) to [bend left=42] (m12);
    \draw[ForestGreen,thick]
        (m14) to [bend right=10] (m34)
        (m13) to [bend right=20] (m14);
    \draw[BrickRed,thick]
        (m34) to [bend left=20] (m24)
        (m24) to [bend left=30] (m23)
        (m23) to [bend left=20] (m34);
    \draw[dashed] (m13) circle[radius=0.175];
\end{tikzpicture}\tabularnewline
\end{tabular}

Clockwise, from the top left, these diagrams prove (\ref{PDG:coherence})
for the index triples $\left(1,3,4\right)$, $\left(4,1,3\right)$,
$\left(4,2,3\right)$, and $\left(3,4,2\right)$. Reading the diagram
in this order, at each step the assumptions for Desargues' theorem
follow from what has already been proven, namely (\ref{PDG:coherence})
for all index triples not involving $\left\{ 3,4\right\} $, as well
as for the index triples $\left(2,3,4\right)$, $\left(4,3,2\right)$,
$\left(3,4,1\right)$, and $\left(1,4,3\right)$. For each of the
four diagrams, we check the assumption of \cref{lem:PDG_Desargues}
involving the indices $\left\{ 3,4\right\} $:
\begin{enumerate}
\item In the top left diagram, one has to verify that $\tilde{f}\left(e_{4,2},s_{3,2}^{-1},s_{3,4}^{-1}\right)=2$,
and rewriting the left hand side as $\tilde{f}\left(s_{2,3},s_{3,4}^{-1},e_{4,2}\right)$,
this follows precisely from (\ref{PDG:coherence}) for $s$ with the
index triple $(2,3,4)$.
\item In the top right diagram, one has to verify that $\tilde{f}\left(e_{2,3},e_{3,4},e_{4,2}\right)=2$.
This follows from (\ref{PDG:coherence}) for $e$ with the index triple
$(2,3,4)$.
\item In the bottom right diagram, one has to verify that $\tilde{f}\left(e_{1,3},e_{4,3},e_{4,1}\right)=2$.
Rewriting the left hand side as $\tilde{f}\left(e_{1,3},e_{3,4},e_{4,1}\right)$,
this follows from (\ref{PDG:coherence}) for $s$ with indices $\left(1,3,4\right)$
(the conclusion from the top left diagram).
\item In the bottom left diagram, one has to verify that $\tilde{f}\left(s_{3,4},s_{4,1}^{-1},e_{1,3}\right)=2$.
This is (\ref{PDG:coherence}) for $s$ with indices $\left(3,4,1\right)$,
which is already known.
\end{enumerate}
\end{proof}

\section{The main undecidability result}

\label[section]{sec:undecidability}

We recall the necessary results from \cite{KY22} and prove \cref{thm:undecidability},
which we restate here for the reader's convenience:
\begin{thm*}
There is no decision procedure that takes as input $n\in\mathbb{N}$
and a vector $v\in\mathbb{Z}^{2^{n}}$ and determines whether $v\in\overline{\Gamma_{n}^{*}}$.
\end{thm*}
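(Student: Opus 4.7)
The plan is to reduce from an undecidable decision problem about finitely presented groups, in the style of \cite{KY22}. Since scaling by a positive rational preserves membership in $\overline{\Gamma^*_n}$ and every matroid has rational rank values, deciding whether an integer vector lies in $\overline{\Gamma^*_n}$ is equivalent (up to a uniform scaling) to deciding whether a given matroid is almost entropic. The work in \cite{KY22} provides a uniformly computable family of symmetric triangular group presentations $\{\langle S_i \mid R_i \rangle\}_{i \in \mathbb{N}}$ together with distinguished pairs $s_i, s'_i \in S_i$ such that the question ``does $s_i = s'_i$ hold in $\langle S_i \mid R_i \rangle$?'' is undecidable (an Adyan--Rabin-style reduction). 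To each presentation I would apply \cref{cons:fp_group_PDG} to obtain a computable family of rank $3$ matroids $M_i = (E_i, f_i, S_i)$.

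The reduction then hinges on the equivalence: $M_i$ is almost entropic if and only if $s_i \neq s'_i$ in $\langle S_i \mid R_i \rangle$. The ``if'' direction is essentially from \cite{KY22}: given distinctness of $s_i$ and $s'_i$ in some suitable quotient of $\langle S_i \mid R_i \rangle$, one realizes $M_i$ as an entropic (in fact multilinear) polymatroid using random variables built from a group action, so $M_i \in \overline{\Gamma^*_{n_i}}$. The ``only if'' direction is the genuinely new content and uses this paper's machinery as follows. Assume $M_i$ is almost entropic. By \cref{thm:lifting}, $M_i$ lifts to an almost entropic rank $4$ PDG on the same generating set $S_i$. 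By \cref{thm:prod_extension}, this can be further extended to an almost entropic rank $4$ PDG closed under geometric products. By \cref{thm:group_from_PDG}, the set of parallelism classes $S_i/{\sim}$ carries a group structure under the geometric product, and a routine check (tracing each relator of $R_i$ through the construction, using that \cref{cons:fp_group_PDG} makes the triples in $R_i$ into PDG relators) shows this group is a quotient of $\langle S_i \mid R_i \rangle$.

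Finally, to complete the reduction, I would show that the map $S_i \to S_i/{\sim}$ is injective for the PDG arising from \cref{cons:fp_group_PDG}. For distinct $s, s' \in S_i$, the rank function of $M_i$ gives $f_i(s_{1,2}, s'_{1,2}) = 2$ (directly from the case analysis in the construction), so this noncollapsing is preserved when passing to the extensions, since an extension cannot decrease rank. Hence $[s] \neq [s']$ for all distinct $s, s' \in S_i$; in particular $s_i \neq s'_i$ in the recovered quotient, and therefore in $\langle S_i \mid R_i \rangle$. Combined with the ``if'' direction and the undecidability of the distinguishing problem for the chosen family of presentations, this yields \cref{thm:undecidability}.

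The main obstacle is the ``only if'' direction, which is precisely what the earlier sections of the paper are engineered to handle: performing synthetic geometry (and in particular Desargues-type incidence reasoning) in the almost entropic setting, where one has no rigid projective structure and must instead argue uniformly along approximating sequences of entropic polymatroids. The almost entropic Desargues theorem (\cref{thm:aent_Desargues}), together with the copy lemma, is the key new tool that powers the lifting and product-extension steps; once the group $S_i/{\sim}$ is in hand, the reduction from group-theoretic undecidability follows the template of \cite{KY22} with only cosmetic modifications.
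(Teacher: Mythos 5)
Your treatment of the genuinely new content --- the soundness direction, where an almost entropic PDG yields a group quotient in which distinct generators stay distinct --- matches the paper's proof: lift to rank $4$ via \cref{thm:lifting}, close under geometric products via \cref{thm:prod_extension}, recover a group via \cref{thm:group_from_PDG}, check that the relators of $R_i$ survive so the result is a quotient of $\langle S_i\mid R_i\rangle$, and use $f_i(s_{1,2},s'_{1,2})=2$ to conclude $[s]\neq[s']$. That part is correct and is essentially identical to the paper's argument.

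The gap is in the completeness (``if'') direction of your reduction. You use a single matroid $M_i$ per instance and assert the equivalence ``$M_i$ is almost entropic iff $s_i\neq s'_i$ in $\langle S_i\mid R_i\rangle$.'' The forward half of that equivalence is false in general, and it is not what \cite{KY22} proves. \Cref{cons:fp_group_PDG} forces $f_i(t_{1,2},t'_{1,2})=2$ for \emph{every} pair of distinct generators $t,t'\in S_i$, so by your own soundness argument an almost entropic $M_i$ would produce a quotient of $\langle S_i\mid R_i\rangle$ in which \emph{all} generators remain pairwise distinct. Hence whenever two generators other than the distinguished pair happen to coincide in the group, $M_i$ fails to be almost entropic even though $s_i\neq s'_i$ --- and one cannot decide in advance which instances have this defect, since that is again a word-problem question. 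This is exactly why \cite{KY22} (and the paper) work with a finite, computable \emph{family} $\mathcal{F}$ of PDGs of quotients, one for each admissible pattern of identifications among generators (never identifying the distinguished generator with $e$), and reduce to the question ``is at least one member of $\mathcal{F}$ almost entropic?'', querying the oracle once per member. With that correction (and the torsion-freeness/soficity hypotheses needed for \cite[Thm.~9.16]{KY22}), your argument goes through and coincides with the paper's.
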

The relevant results from \cite{KY22} are on almost multilinear matroids.
These are a subclass of the class of almost entropic polymatroids,
and we briefly introduce them below.

\subsection{Linear polymatroids and almost-multilinear matroids}
\begin{defn}
A polymatroid $\left(E,f\right)$ is linear if there exists a vector
space $V$, vector spaces $\left\{ W_{e}\right\} _{e\in E}$, and
linear maps $\left\{ T_{e}:V\rightarrow W_{e}\right\} _{e\in E}$
satisfying 
\[
f\left(S\right)=\mathrm{rk}\left(T_{S}\right)\text{ for all }S\subseteq E,
\]
where $T_{S}:V\rightarrow\bigoplus_{e\in S}W_{e}$ is the map given
by $T_{S}(v)=\left(T_{e}(v)\right)_{e\in S}$.
\end{defn}

(This is dual to a commonly given definition: here we describe the
linear structure in terms of images of $V$, but often the definition
is given in terms of the dimensions of subspaces of a given space.
The equivalence is proved in \cite[Sec. 9.1]{KY22}.)

It is known that (finite) linear polymatroids are almost entropic.
There is a ``standard proof'' of this that has been rewritten several
times, and it can be found (for instance) in the introduction to \cite{DFZ_5_vars}.
More precisely, that paper proves that if $\left(E,f\right)$ is linear
then $\left(E,\alpha\cdot f\right)$ is entropic for some real $\alpha>0$.
Since the almost entropic cones are convex cones containing the origin,
$\left(E,f\right)$ is almost entropic.
\begin{defn}
A matroid $\left(E,f\right)$ is almost multilinear if for every $\varepsilon>0$
there exists a linear polymatroid $\left(E,f'\right)$ (on the same
set $E$) and a $c\in\mathbb{N}$ such that 
\[
\max_{S\subseteq E}\left|f\left(S\right)-\frac{1}{c}f^{\prime}\left(S\right)\right|<\varepsilon.
\]
\end{defn}

\begin{lem}
A finite almost multilinear matroid $\left(E,f\right)$ is almost
entropic.
\end{lem}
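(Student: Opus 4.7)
The plan is to use the hypothesis to approximate $f$ by linear polymatroids rescaled to be integer-valued approximations, invoke the ``standard proof'' that linear polymatroids are almost entropic, and then pass to the limit with a diagonal argument.

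First, for each $n\in\mathbb{N}$, the definition of almost multilinearity provides a linear polymatroid $(E,f'_n)$ and a positive integer $c_n$ such that $\max_{S\subseteq E}|f(S)-\tfrac{1}{c_n}f'_n(S)|<\tfrac{1}{n}$. Since $E$ is finite this immediately gives pointwise convergence $\tfrac{1}{c_n}f'_n\to f$, and moreover (again by finiteness of $E$) the convergence is uniform in $S$.

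Next, I would use the fact recalled just above: if $(E,f'_n)$ is linear, there exists $\alpha_n>0$ with $(E,\alpha_n f'_n)$ entropic, so in particular $(E,f'_n)$ is almost entropic. The closed entropic cone $\overline{\Gamma^*_E}$ is a convex cone containing the origin (\cite{Yeung_network_coding}), so any positive real multiple of an almost entropic polymatroid is almost entropic; in particular $(E,\tfrac{1}{c_n}f'_n)$ is almost entropic for every $n$.

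Finally, I would conclude with a diagonal argument analogous to the one in the proof of \cref{lem:aent_chain}. For each $n$, since $\tfrac{1}{c_n}f'_n$ is a pointwise limit of entropic polymatroids, there is an entropic polymatroid $h_n$ on $E$ with $\max_{S\subseteq E}\bigl|h_n(S)-\tfrac{1}{c_n}f'_n(S)\bigr|<\tfrac{1}{n}$. Combining with the previous estimate by the triangle inequality,
\[
\max_{S\subseteq E}\bigl|h_n(S)-f(S)\bigr|<\tfrac{2}{n},
\]
so $h_n\to f$ pointwise and $f$ is almost entropic. Equivalently one can simply note that, since $E$ is finite, $\overline{\Gamma^*_E}$ is closed in $\mathbb{R}^{2^{E}}$, and $f$ lies in the closure of the set of almost entropic polymatroids, which equals $\overline{\Gamma^*_E}$ itself.

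There is no real obstacle here: almost multilinearity is by design a scalar/limit weakening of linearity, and the only ingredients needed are the known statement that linear polymatroids are (up to a positive scalar) entropic, and the elementary fact that pointwise limits of almost entropic polymatroids on a finite ground set are almost entropic. The only mild care is in tracking the scaling $1/c_n$ through the two successive approximations, which the diagonal step handles cleanly.
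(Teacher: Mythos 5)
Your proof is correct and follows essentially the same route as the paper: approximate $f$ by rescaled linear polymatroids, use that linear polymatroids are almost entropic and that the almost entropic cone is closed under positive scaling, and combine the two approximations via the triangle inequality. The only difference is cosmetic ($1/n$ in place of $\varepsilon$).
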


\begin{proof}
Let $\varepsilon>0$. There exists $c>0$ and a linear polymatroid
$\left(E,f'\right)$ such that $\max_{S\subseteq E}\left|f\left(S\right)-\frac{1}{c}f^{\prime}\left(S\right)\right|<\varepsilon$.
Since $\left(E,f'\right)$ is almost entropic, so is $\left(E,\frac{1}{c}f'\right)$,
and we can find an entropic polymatroid $\left(E,g\right)$ such that
$\max_{S\subseteq E}\left|g\left(S\right)-\frac{1}{c}f'\left(S\right)\right|<\varepsilon$.
In particular, 
\[
\max_{S\subseteq E}\left|g\left(S\right)-f\left(S\right)\right|<2\varepsilon,
\]
so $f$ is a pointwise limit of entropic polymatroids.
\end{proof}

\subsection{Results from \cite{KY22}}

\label[section]{sec:KY22}

In outline, the proof in \cite{KY22} that there is no decision procedure
to determine whether a matroid is almost multilinear proceeds as follows:
\begin{enumerate}
\item From a torsion-free finitely presented sofic group $G=\left\langle S\mid R\right\rangle $
(with the given presentation symmetric triangular, see \cref{cons:fp_group_PDG})
and an element $s\in S$ we construct a certain family $\mathcal{F}$
of PDGs of rank $3$. All of them are PDGs of finitely presented groups
which are quotients $Q$ of $G$, on the same set of generators as
$S$ (modulo an equivalence relation, identifying pairs of generators
that map to the same element of $Q$. The generator $s$ is never
identified with the generator $e\in S$ mapping to the trivial element
$e_{Q}$).
\item It is proved that $s$ does not map to the trivial element of $G$
(and hence $s$ does not map to the trivial element of $G$) if and
only if at least one matroid in $\mathcal{F}$ is almost multilinear.
\end{enumerate}
The family $\mathcal{F}$ in step (1) is always finite and computable
given $\left\langle S\mid R\right\rangle $ and $s\in S$. The two
implications of part (2) are separate theorems in \cite{KY22}:
\begin{enumerate}
\item [(a)]The statement that if $s$ maps to a nontrivial element in $G$
then at least one matroid in $\mathcal{F}$ is almost multilinear
is \cite[Thm. 9.16]{KY22}. Note that in this case also at least one
matroid in $\mathcal{F}$ is almost entropic.
\end{enumerate}
\begin{itemize}
\item [(b)]The statement that if at least one matroid in $\mathcal{F}$
is almost multilinear then $s$ does not map to the trivial element
of $G$ is \cite[Thm. 9.17]{KY22}. (In \cite{KY22} we did not have
the machinery to prove this for almost entropic matroids.)
\end{itemize}
Part (a) is a fortiori true for almost entropic matroids, because
an almost multilinear matroid is almost entropic. Hence, to prove
step (2) for almost entropic (rather than almost multilinear) matroids
it suffices to prove part (b) with ``almost entropic'' replacing
``almost multilinear'' in the statement. 
\begin{rem}
This is really just an extension of \cite[Thm. 9.12]{KY22} to the
almost entropic case - the rest of the argument is identical. During
the writing of \cite{KY22} we spent some time thinking about this,
but did not have the correct tools to tackle the problem. Our argument
extracted a concrete group representation of $G$ (taking values in
``approximate linear maps'') from the data of an almost multilinear
approximating sequence. Getting such explicit representations in the
almost entropic setting seems challenging, but Desargues' theorem
gives a viable replacement.
\end{rem}

\begin{proof}
[Proof of \cref{thm:undecidability}] Following the discussion above,
(and re-setting the notation, so $G=\left\langle S\mid R\right\rangle $
denotes a general finitely presented group,) it suffices to prove:
if $\left(E,f,S\right)$ is the PDG of rank $3$ of a finitely presented
group $G=\left\langle S\mid R\right\rangle $ (see \cref{cons:fp_group_PDG}),
and $x\in S$ satisfies $f\left(x_{1,2},e_{1,2}\right)\neq1$, then
$x$ maps to a nontrivial element of $G$. 

By \cref{thm:lifting}, the PDG of rank $4$ of the same finitely
presented group is also almost entropic. Call this $\left(\hat{E},\hat{f},S\right)$.
Now by \cref{thm:group_from_PDG} we have an extension $\left(\tilde{E},\tilde{f},\tilde{S}\right)$
such that $S\subseteq\tilde{S}$, and the parallelism classes of $\tilde{S}$
form a group. By definition, the geometric product in $\left(\tilde{E},\tilde{f},\tilde{S}\right)$
agrees with that in $\left(\hat{E},\hat{f},S\right)$ whenever the
latter is defined. In particular, for each relator $s\cdot s'\cdot s''$
in $R$ the same elements of $\tilde{S}$ satisfy $\left[s\right]\cdot\left[s'\right]\cdot\left[s''\right]=\left[e\right]$.
It follows that the group $\left(\tilde{S}/\mathord{\sim},\cdot\right)$
is a quotient of $G=\left\langle S\mid R\right\rangle $, where the
quotient map sends each $t\in S$ to $\left[t\right]$. Since $f\left(x_{1,2},e_{1,2}\right)\neq1$,
also $\tilde{f}\left(x_{1,2},e_{1,2}\right)\neq1$ and hence $\left[x\right]\neq\left[e\right]$.
Hence $x$ maps to a nontrivial element of $G$.
\end{proof}
\bibliographystyle{amsalpha}
\bibliography{aent}

\end{document}